\newlist{steps}{enumerate}{1}
\setlist[steps, 1]{label = Step \arabic*:}
\begin{document}

\theoremstyle{plain}
\swapnumbers
 \newtheorem{theorem}{Theorem}[subsection]
\newtheorem{lemma}[theorem]{Lemma}
\newtheorem{corollary}[theorem]{Corollary}
\newtheorem{proposition}[theorem]{Proposition}

\newtheorem*{thma}{Theorem A}
\newtheorem*{thmb}{Theorem B}
\newtheorem*{thmc}{Theorem C}
\newtheorem*{thmd}{Theorem D}

\theoremstyle{definition}
\newtheorem{definition}[theorem]{Definition}
\newtheorem{example}[theorem]{Example}
\newtheorem{prop}[theorem]{Proposition}
\newtheorem{problem}[theorem]{Problem Statement}
\newcommand{\R}{\mathbb{R}}
\newcommand{\Z}{\mathbb{Z}}
\newcommand{\mF}{\mathbb{F}}
\theoremstyle{remark}
\newtheorem{ob}[theorem]{Observation}
\newtheorem{remark}[theorem]{Remark}
\newtheorem{ack}[theorem]{Acknowledgement}

%
%
\title[K ring of the Flip Stiefel Manifold]{The complex $K$ ring of the flip Stiefel manifolds}
\author[S.~Basu]{Samik Basu}
\address{Stat-Math Unit,
Indian Statistical Institute,
B. T. Road, Kolkata-700108, India.}
\email{samik.basu2@gmail.com; samikbasu@isical.ac.in}
\author[S.~Gondhali]{Shilpa Gondhali}
\address{Department of Mathematics\\ Birla Institute of Technology and Science (BITS)-Pilani, K K Birla
Goa Campus\\ 403726 Goa, India.}
\email{shilpag@goa.bits-pilani.ac.in, shilpa.s.gondhali@gmail.com}
\author[S.~Fathima]{Fathima Safikaa}
\address{Department of Mathematics\\ Birla Institute of Technology and Science (BITS)-Pilani, K K Birla
Goa Campus\\ 403726 Goa, India.}
\email{p20180409@goa.bits-pilani.ac.in, fsafikaa@gmail.com }
\date{\today}
\subjclass{05A10, 57S25, 19L64, 55N15 , 57R15.}
\keywords{Flip Stiefel manifolds, Hodgkin Spectral Sequence}

\begin{abstract}
The flip Stiefel manifolds \(FV_{m,2s}\) are defined as the quotient of the real Stiefel manifolds \(V_{m,2s}\) induced by the simultaneous pairwise flipping of the co-ordinates by the cyclic group of order 2. We  calculate the complex \(K\)-ring of the flip Stiefel manifolds,  $K^\ast(FV_{m,2s})$, for $s$ even. Standard techniques involve the representation theory of $Spin(m),$ and the Hodgkin spectral sequence. However, the non-trivial element inducing the action doesn't readily yield the desired homomorphisms. Hence, by performing additional analysis, we settle the question for the case of \(s \equiv 0 \pmod 2.\) 
\end{abstract}
\maketitle

\newcommand{\mR}{{\mathbb R}}
\newcommand{\Res}{\text{Res}}
\newcommand{\mC}{{\mathbb C}}
\newcommand{\diag}{\text{diag}}
\newcommand{\nf}[2]{N_F({#1},{#2})}
\newcommand{\np}[2]{N_P({#1},{#2})}
\newcommand{\mZ}{{\mathbb Z}}
\newcommand{\mS}{{\mathbb S}}
\newcommand{\mRP}{{\mathbb RP}}
\newcommand\restr[2]{\ensuremath{\left.#1\right|_{#2}}}
\section{Introduction}\label{sintro}
The study of topological properties of the flip Stiefel manifold was initiated in \cite{bgs}. The flip Stiefel manifold is formally defined as follows:

\begin{definition}
Let $1 < 2s \leq m$. Consider an action of $C_2= \{e, a\},$ the additive group of order two, on $V_{m, 2s}$ given by
\[
a \cdot (v_1, v_2, \ldots, v_{2s})= (v_2, v_1, v_4, v_3, \ldots, v_{2s}, v_{2s-1})
\]
where $(v_1, v_2, \ldots, v_{2s}) \in V_{m, 2s}.$ A {\it{flip Stiefel manifold}}, denoted by the symbol $FV_{m,2s}$, is the quotient space of $V_{m,2s}$ under the given action of $C_2.$
\end{definition}
Equivalently, we can express the action of $a$ on $V=(v_1, v_2, \ldots, v_{2s})$ as multiplication from right by the $2s \times 2s$  permutation matrix  $F=\diag(f, f,\ldots, f)$ for $f= \begin{bmatrix}
0 & 1\\
1 & 0
\end{bmatrix}.$ To be precise,
\[
a \cdot V=
\begin{matrix}
\begin{matrix} v_1 & v_2 &\cdots & v_{2s} \end{matrix} & \\
\begin{bmatrix}
v_{11} & v_{12} & \cdots & v_{12s} \\
v_{21} & v_{22} & \cdots & v_{22s}\\
\vdots & \ddots & \ddots & \vdots \\
v_{m1} & v_{m2} & \cdots & v_{m2s}
\end{bmatrix} &
\begin{matrix}  \end{matrix}&
\begin{matrix} F \end{matrix}
\end{matrix}\]

The investigation into the topology of \(FV_{m,2s}\) initiates with the analysis of its tangent bundle, in \cite{bgs}. By computing the Stiefel-Whitney classes and mod-2 cohomology of \(FV_{m,2s}\), the authors derived several results related to parallelizability, span and stable span of \(FV_{m,2s}\). In addition to this analysis, the authors demonstrated that \(FV_{m,2s}\) is a homogeneous manifold and discussed index theory based applications to equivariant maps between Stiefel manifolds. In this paper, we extend the analysis of the topology of \(FV_{m,2s}\), a step further, by determining \(K^{\ast}(FV_{m,2s})\) for \(s \equiv 0 \pmod 2.\) \\

\noindent Among the various historical approaches employed in computing the complex \(K\)-ring of a homogeneous  manifold \(G/H\), where \( G\) denotes a compact, connected Lie group with  torsion free fundamental group and \(H\) denotes a closed subgroup of \(G\), two particularly efficient tools stand out: the Atiyah-Hirzebruch spectral sequence \cite[Theorem 3.6]{ah} and the Hodgkin spectral sequence \cite{h}. While the Atiyah-Hirzebruch spectral sequence requires the closed subgroup \(H\) to be connected and maximally ranked in \(G\), the method by Hodgkin allows us to work with a more general \(H.\) Thus, from \cite[Lemma 4]{bgs}, we get the insight to proceed with Hodgkin's approach. We state one of the  versions of the Hodgkin spectral sequence here and for detailed discussions, refer to \cite{aguz}, \cite[Section 3]{g} and \cite{ro}.
 
\begin{theorem}(Hodgkin) Let $G$ be a compact, connected Lie group with $\pi_1(G)$ torsion-free and H be a closed subgroup of G. There exists a strongly convergent spectral sequence such that
\begin{itemize}
\item As an algebra, $E^p_2(G/H) = Tor^p_{RG}(RH; \mZ).$
\item The differential $d_r: E^{p-r}_r \rightarrow E^p_r$ is zero for $r$ even.
\item $E_{\infty}^{\ast}$ is the graded algebra associated to a multiplicative filtration of $K^{\ast}(G/H),$ i.e., there is a filtration 
\[ 0 = F_{-2} \subset F_0 \subset F_2 \subset \cdots \subset F_{2n} = K^{0}(G/H)\]
\[0 = F_{-1} \subset F_1 \subset F_3 \subset \cdots \subset F_{2n+1} = K^{1}(G/H) \]
with $F_i \cdot F_j \subset F_{i+j}$ under the product in $K^{\ast}(G/H)$ and $E^p_{\infty} = F_p/F_{p-2}$. Furthermore, the product in $E^{\ast}_{\infty}$ is that induced naturally by that in $K^{\ast}(G/H).$
\end{itemize}
\end{theorem}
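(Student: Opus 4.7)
The plan is to derive the result from equivariant complex K-theory via a Künneth-type construction, in the spirit of the Eilenberg--Moore spectral sequence in ordinary cohomology. The geometric backbone is the identification $G/H \cong G \times_G G/H$, where $G$ acts on itself by right translation and on $G/H$ by the quotient action. This suggests producing a Künneth spectral sequence
\[
E_2 = Tor^{\ast}_{RG}\bigl(K^{\ast}_G(G),\, K^{\ast}_G(G/H)\bigr) \;\Longrightarrow\; K^{\ast}(G \times_G G/H) \;=\; K^{\ast}(G/H).
\]
Direct computation then gives the two equivariant inputs: $K^{\ast}_G(G) = \mZ$ (the free $G$-action on itself has a point as quotient), and $K^{\ast}_G(G/H) = RH$ through the equivalence between $G$-equivariant vector bundles on $G/H$ and representations of $H$, with the $RG$-module structure coming from restriction. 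By symmetry of $Tor$, the $E_2$-page rewrites as $Tor^{\ast}_{RG}(RH, \mZ)$, matching the statement.

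The construction of the spectral sequence I would produce by building a projective $RG$-resolution of $RH$ (or dually of $\mZ$) and lifting it to a $G$-equivariant filtration whose associated graded equivariant K-theory realizes the algebraic resolution term by term. The torsion-free hypothesis on $\pi_1(G)$ is essential at this step: it implies, via the structure of representation rings of simply connected compact Lie groups (which are polynomial rings $\mZ[\lambda_1,\ldots,\lambda_n]$ on the fundamental representations) and a covering argument, that $RG$ has finite global dimension for the modules of interest, so the resolution is finite and the spectral sequence is strongly convergent. The geometric filtration then gives rise to the filtration $F_{\bullet}$ of $K^{\ast}(G/H)$ with $E^p_{\infty} = F_p / F_{p-2}$ as stated.

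The multiplicative structure comes from the diagonal map $G/H \to G/H \times G/H$ combined with the external product on K-theory; this promotes the filtration to one of subalgebras and yields $F_i \cdot F_j \subset F_{i+j}$, with the induced product on $E_{\infty}$ matching the one inherited from the $Tor$ product. For the vanishing of $d_r$ at even $r$, one uses the intrinsic $\mZ/2$-grading on complex K-theory: the $E_2$ terms carry a bidegree by $Tor$ index and K-theoretic parity, the differential $d_r$ shifts the $Tor$ index by $r$ and the parity by $r-1 \pmod 2$, and hence vanishes on the bigraded components whenever $r$ is even. The main obstacle will be the careful geometric realization of the algebraic resolution together with a rigorous verification of strong convergence and multiplicativity; this technical core is precisely the content of Hodgkin's original construction and is revisited in the references cited in the excerpt.
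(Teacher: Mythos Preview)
The paper does not supply its own proof of this theorem: it is quoted as Hodgkin's result and the reader is referred to \cite{h}, \cite{aguz}, \cite{g}, and \cite{ro} for the details. So there is no ``paper's proof'' to compare against; what can be assessed is whether your sketch matches the standard argument in those references.

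Your outline is indeed the correct strategy and is essentially Hodgkin's own. The equivariant K\"unneth spectral sequence for $G\times_G G/H$, together with the identifications $K_G^\ast(G)\cong K^\ast(\mathrm{pt})=\mZ$ and $K_G^\ast(G/H)\cong RH$, is exactly how the $E_2$-term $Tor_{RG}^\ast(RH,\mZ)$ arises. Your explanation of the even-differential vanishing is also right in spirit: since both $RH$ and $\mZ$ sit in even K-theoretic degree, $E_2^{p,q}$ vanishes for $q$ odd, and the differential $d_r$ shifts the internal degree by $r-1$, so for even $r$ it maps a nonzero group to a zero one. One refinement worth noting: the role of the torsion-free $\pi_1(G)$ hypothesis is slightly more delicate than you indicate. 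It is not merely that $RG$ has finite global dimension; the key input (due to Hodgkin, and in the form used here to Pittie and Steinberg) is that under this hypothesis $RH$ is actually a finitely generated \emph{free} $RG$-module when $H$ is of maximal rank, and more generally that the relevant $Tor$ groups are well-behaved enough to guarantee strong convergence and collapse in many cases. Your phrase ``finite global dimension for the modules of interest'' glosses over this, but the overall architecture of the argument is sound.
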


To make use of this theorem, we choose to view $FV_{m,2s}$ as a quotient of the compact, connected Lie group \(Spin(m)\), whose fundamental group is torsion-free, by an appropriate closed subgroup, say $H_{m,2s}$. Then  we determine the $RSpin(m)$-module structure of $RH_{m,2s}$ by evaluating the restriction homomorphism $\Res \colon RSpin(m) \to RH_{m,2s}$, where $RSpin(m)$ and $RH_{m,2s}$ denote the complex representation rings of $Spin(m)$ and $H_{m,2s}$ respectively. Finally, we compute the most essential ingredient of the Hodgkin spectral sequence, $Tor^\ast_{RSpin(m)}(RH_{m,2s}; \mZ)$, that describes \(K^{\ast}(FV_{m,2s}).\) \\
 
This approach represents a standard methodology, widely employed in the calculations of the complex $K$-ring of the other quotients of Stiefel manifolds like the real projective Stiefel manifold and complex projective Stiefel manifold(\cite{aguz}, \cite{bh}, \cite{g}). It is crucial to note that these manifolds are obtained as quotient spaces under the action of right multiplication by the matrix $D= diag(z, z, \ldots, z)$ where $z \in \mathbb{Z}/2$ for the real projective Stiefel manifold and $z  \in S^1$ for the complex projective stiefel manifold. However, a significant challenge posed by the flip Stiefel manifold is that unlike the matrix $D$ that belongs to the center of $O(m)$ or $U(m)$ in their respective cases, the matrix $F$ is not a central element. These matrices, essentially defining the action, play a pivotal role in the  \(K\)-ring computation because it is through these matrices that we obtain the description of the quotienting subgroup \(H\),  subsequently the ring \(RH\) and the \(RG\)-module structure of \(RH\). The lack of centrality in \(F\) necessitates  remedying modifications to be incorporated to seamlessly carry out the computations. \\

\noindent Before outlining the section-wise plan, we fix a few notations which are necessary because our calculations involve representation ring of the {\it Spin} group which has distinct description in odd and even cases.
\[
\begin{tabular}{ccc}
  & ~ $\bf  m= 2n$ or $\bf 2n+ 1,$ & ~$\bf m-2s = 2c$ or $\bf 2c+ 1.$ 
\end{tabular}
\]

In Section  \ref{sfvandspin}, we justify our choice of \(G=Spin(
m)\) and determine the exact description of \(H_{m,2s}\) such that \(FV_{m,2s}\) can be viewed as the homogeneous space \(G/H_{m,2s}\),   meeting the conditions specified by the Hodgkin spectral sequence theorem. In Section \ref{srep_rings}, we recall the representation ring of \(Spin(m)\), denoted \(RSpin(m)\) \cite{hu}), and we describe the role played by the maximal tori of \(Spin(m)\) in determining the representation ring of the closed subgroup \(H_{m,2s}\). In Section \ref{smoddsequiv0}, we demonstrate the computation of \(K^{\ast}(FV_{m,2s})\) for \(m \equiv 1 \pmod 2\) and \(s \equiv 0 \pmod 4\) in a detailed manner. This computation gives us Theorem A, one of the primary results of this paper. Here, and in the rest of the results on \(K^{\ast}(FV_{m,2s})\), we follow the convention that relations involving $u_4$ do not appear in $I$ when $\alpha = n.$
\begin{thma}
For \(m \equiv 1 \pmod 2 \) and \(s \equiv 0 \pmod 4\), let $2^{\alpha}= GCD\{2^n, b_0\}$ with $b_0$= $GCD\displaystyle{\bigg\{\binom{(s/2) + i -1}{i}2^{2i-1}\bigg|i= c+1, c+2, \ldots, n-1\bigg\}}$ and 
\[
K^\ast(FV_{m,2s}) \cong \begin{cases}
\Lambda^{\ast}[t_1, t_2, \ldots t_{s-2}, u_1, u_2, u_3,v] \otimes \mZ[y,\delta_c]/I \hspace{0.9cm}\text{when} ~\alpha= n,\\
\Lambda^{\ast}[t_1, t_2, \ldots t_{s-2}, u_1, u_2, u_3,u_4,v] \otimes \mZ[y,\delta_c]/I  ~~~\text{when} ~\alpha< n,
\end{cases}
\]
where \(I\) is generated by $y^2+2y, ~\delta_c^2+ 2^{c+1}\delta_c- 2^{2c-1}y \bigg[1- \binom{(s/2)+c}{c} \bigg], ~2^{\alpha}y, ~2^{s-1}(2+y)\delta_c+ 2^{n-1}y, ~2^{\alpha}u_3 - (2^{s-1} \delta_c + 2^{n-1})u_1, ~(y+2)u_3- 2^{n-\alpha}u_1, ~yu_1, ~yu_2, ~\delta_cu_2, ~(y+2)u_4- 2^{n-1-\alpha}\delta_cu_1- 2^{-\alpha}b_0u_2, ~(y+2)(2^c\delta_c+1)u_3 +(y+2)\delta_cu_4-2^{n-\alpha}u_1, ~2^{c-\alpha+1}b_0u_2 + (y+2)(2^{c+1} +\delta_c)u_4, ~2^{c-\alpha -1}b_0u_2 + (2^{c-1}\delta_c + 2^{2c-2}y\bigg[1- \binom{(s/2)+c}{c} \bigg] +2^{c-1}\delta_cy)u_3 + (2^c+\delta_c+2^cy+\delta_cy)u_4, ~u_2u_4, ~u_1u_3, ~\delta_cu_1u_4, ~2^{n-\alpha}b_0u_1u_2+2^nu_1u_4+b_0u_2u_3, ~2^{n-2\alpha}b_0u_1u_2+2^{-\alpha}b_0u_2u_3 +2u_3u_4, ~-2^{n-\alpha+1}u_1u_4 + 2^{-\alpha}b_0u_2u_3+2u_3u_4, ~2^{n-2\alpha}b_0u_1u_2 + 2^{n-\alpha}u_1u_4+2u_3u_4, u_1u_2- 2v.$
\end{thma}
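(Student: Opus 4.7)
The plan is to apply the Hodgkin spectral sequence to the homogeneous presentation $FV_{m,2s}=Spin(m)/H_{m,2s}$ identified in Section \ref{sfvandspin}, using the explicit description of $RH_{m,2s}$ and the restriction homomorphism $\Res\colon RSpin(m)\to RH_{m,2s}$ from Section \ref{srep_rings}. For $m=2n+1$, the ring $RSpin(m)$ is a polynomial algebra generated by the exterior power representations $\lambda^1,\ldots,\lambda^{n-1}$ together with the spin representation $\Delta$, so I would build a Koszul-type resolution of $\mZ$ over $RSpin(m)$ by the augmentation-ideal generators $\lambda^i-\binom{m}{i}$ and $\Delta-2^n$, and tensor with $RH_{m,2s}$ to compute $Tor^{\ast}_{RSpin(m)}(RH_{m,2s};\mZ)$.

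Next I would compute this Tor algebra by tracking the images of these generators under $\Res$ in terms of the maximal torus coordinates, using the flip-adapted basis of $RH_{m,2s}$. The classes $t_1,\ldots,t_{s-2}$ should come from the exterior powers $\lambda^i$ with $1\le i\le s-2$, whose restrictions lie entirely in the augmentation ideal of $RH_{m,2s}$ and therefore produce exterior Tor-generators freely. The polynomial generators $y$ and $\delta_c$ are the new classes contributed by $RH_{m,2s}$ itself (reflecting the non-central flip in $H_{m,2s}$ and the extra $Spin(2c+1)$ factor), and the mixed generators $u_1,u_2,u_3$ (together with $u_4$ when $\alpha<n$) arise from the remaining $\lambda^i$ with $c+1\le i\le n-1$ and from $\Delta-2^n$; the sharp count depends on the $2$-adic saturation of the relevant restrictions.

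The delicate combinatorial step is computing $b_0$, the gcd of the $2$-adic contributions $\binom{(s/2)+i-1}{i}2^{2i-1}$ coming from the symmetric-function expansions of the restricted $\lambda^i$, and then intersecting with $2^n$ coming from $\Delta$. The two cases $\alpha=n$ versus $\alpha<n$ arise precisely according to whether $b_0$ saturates $2^n$: in the latter case an additional Koszul generator survives and becomes $u_4$. Since $d_r=0$ for $r$ even, the spectral sequence collapses on odd pages, so after verifying that no higher odd differentials act on the generators (a dimension or parity check), $E_\infty=E_2$.

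The main obstacle, as usual for Hodgkin computations, is resolving the multiplicative extensions. The spectral sequence delivers only the associated graded, whereas the relations in $I$—for instance $u_1u_2-2v$, the mixed quadratic relation in $\delta_c$, and the many products involving lower-filtration correction terms like $2^{n-1}y$ or $2^{c-1}\delta_c y$—can only be pinned down by lifting representatives to $K^{\ast}(FV_{m,2s})$ and tracing them through the filtration. The non-centrality of $F$ forbids a Künneth-style splitting and forces explicit bookkeeping of how the $H_{m,2s}$-side classes $y,\delta_c$ interact with the Tor-side classes $u_i$; this is where the bulk of the argument, and the sensitivity of the relations to the exact coefficients (not just their $2$-adic valuations), will live.
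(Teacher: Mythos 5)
Your high-level strategy --- the Hodgkin spectral sequence for $Spin(m)/H_{m,2s}$, a Koszul resolution of $\mZ$ over $RSpin(m)$ by augmentation-ideal generators, the collapse $E_2=E_\infty$ because the Tor generators sit in homological degrees $0$ and $1$, and the warning about multiplicative extensions --- matches the paper. But there is a genuine gap in the mechanism you give for the exterior classes $t_1,\ldots,t_{s-2}$, and that mechanism is where the actual computation lives. You claim they come from $\lambda^1,\ldots,\lambda^{s-2}$ because their restrictions ``lie entirely in the augmentation ideal of $RH_{m,2s}$ and therefore produce exterior Tor-generators freely.'' Lying in the augmentation ideal is automatic for every generator once one passes to zero-rank classes (that is precisely why the paper replaces the $\lambda^i$ by the Pontryagin classes $\Pi_i$); it is not sufficient to split off a free exterior factor. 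For that the restriction must be \emph{zero} (property Koz.~2), and no $\Pi_i$ restricts to zero on the nose. The paper's actual route is: (i) prove that $RH_{m,2s}$ is a \emph{free} module over the subring generated by $\Res(\Pi_1),\ldots,\Res(\Pi_c)$ (Lemma \ref{lrelationspibarandpi}, via a unitriangular change of basis between $\bar\Pi_i$ and $\Pi'_i$), so that change of rings (Koz.~3) removes the first $c$ Koszul generators entirely; (ii) apply a $GL_{s-1}(\mZ)$ change of basis built from the gcd-matrix lemma (Problem \ref{pGCDMatrix}) to the \emph{remaining} generators $\Pi_{c+1},\ldots,\Pi_{n-1}$ --- note the index range is $c+1,\ldots,n-1$, not $1,\ldots,s-2$ --- so that one combination $V_1$ restricts to $b_0y$ and the other $s-2$ restrict to zero; only those give the $t_i$. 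Without step (i) your Koszul complex has $n$ generators all acting nontrivially on $RH_{m,2s}$ and, as the paper notes, its homology is not computable; without step (ii) the quantity $b_0$ never appears and no exterior factor splits off.

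A second, related gap: the classes $u_1,\dots,u_4$, $v$ and essentially all relations in $I$ are read off from the explicit homology of the residual two-generator Koszul complex $0\to B(x_1\wedge x_2)\to Bx_1\oplus Bx_2\to B\to 0$ over $B=\mZ[\delta_c,y]/I_B$, with $d_1(x_1)=b_0y$ and $d_1(x_2)=2^{s-1}(2+y)\delta_c+2^{n-1}y$; the dichotomy $\alpha=n$ versus $\alpha<n$ comes from solving the linear system defining $\ker d_1$ over $B$ (Lemma \ref{lkergenerators}), not from ``an additional Koszul generator surviving'' the gcd comparison. Your proposal never reduces to a complex small enough for this solve, so the generators and the twenty-odd relations in $I$ remain out of reach. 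You would also need the concrete inputs you only gesture at: the identification of the conjugate maximal torus containing $\omega$, the formula for $\mu^{\#}(z_i)$, and hence the closed forms of $\Res(\Pi_i)$ and $\Res(\delta_n)$ in Proposition \ref{restriction map}.
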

In Sections \ref{smevensequiv0} and  \ref{smoddsequiv2}, we reiterate the procedure followed in Section \ref{smoddsequiv0} by making necessary modifications to obtain \(K^{\ast}(FV_{m,2s})\) in the remaining cases under consideration. 

\begin{ack}
Second author thanks Prasanna Kumar for carefully proof-reading the manuscript. Third author thanks University Grant Commission - Ministry of Human Resource Development, New Delhi for the financial support provided through the CSIR- UGC fellowship. Third author thanks the Birla Institute of Technology and Science Pilani, K K Birla Campus for the facilities provided to carry out research work. 
\end{ack}

\section{$FV_{m,2s}$ as the quotient of $Spin(m)$}\label{sfvandspin}
 In \cite{bgs}, it is established that \(FV_{m,2s}\) is a homogeneous manifold through the transitive action of the orthogonal group \(O(m)\). This is proven by demonstrating its equivalence to the quotient space \(\displaystyle \frac{O(m)}{C_2 \times O(m-2s)} \), where \(\displaystyle C_2= \{ I_{2s\times 2s},F\}\) and \(F\) is the matrix introduced in Section \ref{sintro}. It is to be noted that \(O(m)\) is not connected and thus it does not fulfill the requirement of the Hodgkin spectral sequence theorem. However,   the connected component   $SO(m)$ of \(O(m)\) also acts transitively on $FV_{m,2s}$ and similar argument proves $FV_{m,2s}  \cong \displaystyle \frac{SO(m)}{S(C_2 \times O(m-2s))}$, where $S(C_2 \times O(m-2s)) = SO(m) \cap (C_2 \times O(m-2s)),$ but \(\pi_1(SO(m))\) is not torsion-free. To address this,  we consider its double cover $Spin(m)$ of \(SO(m),\) which is connected and possesses a  torsion free fundamental group.\\

 Utilizing the 2-fold covering map $\varrho:Spin(m) \rightarrow SO(m)$, we establish $FV_{m,2s}  \cong \displaystyle \frac{Spin(m)}{H_{m,2s}}$, where $H_{m,2s} = \varrho^{-1}(S(C_2 \times O(m-2s)))$. To further elucidate the subgroup \(H_{m,2s}\), we recall the definition of $Spin(m)$ obtained by means of the real Clifford algebras \cite[Chapter 1, Section 6]{bd}. \\
 
 Consider $V=\mR^m$  equipped with the quadratic form $Q\colon \mR^m \to \mR$ defined by $Q(x) = -| x| ^2$. Let $e_1, \ldots, e_m$ denote the standard basis elements of \(V\). The Clifford algebra  $Cl(V,Q)$, denoted by \(\mathcal{C}_m\), with the structure map $i \colon \mR^m \to \mathcal{C}_m$ satisfying $(i(x))^2 = Q(x)\cdot1$ has the relations $e_i^2 = -1 $ and $e_i\cdot e_j = -e_j\cdot e_i$ for $i \neq j$. There is a canonical automorphism $\alpha: \mathcal{C}_m \to \mathcal{C}_m$  such that $\alpha^2 = id, ~ \alpha(x) = -x$ for $x \in i(V)$. Let $\mathcal{C}_m^{\ast}$ denote the group of units of $\mathcal{C}_m$. The subgroup $\Gamma_m = \{x \in \mathcal{C}_m^{\ast} \mid \alpha(x)vx^{-1} \in V,~ \text{for all} \, v \in V\}$ is endowed with the norm determined by \(N(x) = ~ |x| ^2\), for \(x \in \mR^m\). \(Spin(m)\) and \(Pin(m)\) are defined in terms of the homomorphism $p: \Gamma_m \to Aut(\mR^m)$ given by $p(x)v = \alpha(x)vx^{-1}$ as follows.
 \begin{itemize}
     \item $Pin(m)$ is the kernel of the norm \(N: \Gamma_m \to \mR^{\ast}\) and the map \(\restr{p}{Pin(m)}\) has image \(O(m).\)
\item $Spin(m)$ is the inverse image of $SO(m)$ under $p$ and \(\restr{p}{Spin(m)} = \varrho.\)
 \end{itemize}
 Having defined \(Spin(m)\) using the Clifford algebra \(\mathcal{C}_m\), we now move on to the description of \(H_{m,2s}.\)
\begin{lemma}\label{l2.0.1}
$H_{m,2s} = Spin(m-2s) \sqcup \omega Spin(m-2s)$ with 
$\omega = \omega_1\cdots \omega_{s}$, where $\omega_i = \frac{1}{\sqrt{2}}(e_{2i-1} - e_{2i})$, for all $i.$ 
\end{lemma}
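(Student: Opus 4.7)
The plan is to exhibit the element $\omega$ explicitly inside $Spin(m)$ covering the flip matrix $F \oplus I_{m-2s}$, identify the natural copy of $Spin(m-2s)$ sitting inside $H_{m,2s}$ via the Clifford subalgebra on the last $m-2s$ generators, and then use a covering-group count to show that these two cosets exhaust the preimage $H_{m,2s} = \varrho^{-1}(S(C_2 \times O(m-2s)))$.

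First I would work purely inside $\mathcal{C}_m$. Using $e_j^2 = -1$ and $e_i e_j = -e_j e_i$ for $i \neq j$, a direct expansion gives $\omega_i^2 = -1$, so $\omega_i^{-1} = -\omega_i$. Combined with $\alpha(\omega_i) = -\omega_i$ (because $\omega_i \in i(V)$), the formula $p(\omega_i)(v) = \alpha(\omega_i)\, v\, \omega_i^{-1} = \omega_i v \omega_i$ can be checked case-by-case using the anticommutation relations. It yields $p(\omega_i)(e_{2i-1}) = e_{2i}$, $p(\omega_i)(e_{2i}) = e_{2i-1}$, and $p(\omega_i)(e_j) = e_j$ for $j \notin \{2i-1, 2i\}$. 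Since the $\omega_i$'s involve pairwise disjoint Clifford generators they commute, and hence $\varrho(\omega) = p(\omega_1)\cdots p(\omega_s) = F \oplus I_{m-2s}$. As $\omega$ is a product of $s$ unit-norm vectors with $s$ even, it lies in $Pin(m)$ with $N(\omega)=1$; and since $\det F = (-1)^s = 1$, its image is in $SO(m)$, so $\omega \in Spin(m)$ and in fact $\omega \in H_{m,2s}$.

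For the other half, the Clifford subalgebra generated by $e_{2s+1}, \ldots, e_m$ is canonically identified with $\mathcal{C}_{m-2s}$, and the resulting inclusion $Spin(m-2s) \hookrightarrow Spin(m)$ has image whose $\varrho$-image is exactly $\{I_{2s}\} \oplus SO(m-2s) \subset S(C_2 \times O(m-2s))$; so $Spin(m-2s) \subset H_{m,2s}$. Since $\varrho(\omega) = F \oplus I_{m-2s}$ does not lie in $\{I_{2s}\} \oplus SO(m-2s)$, the two cosets $Spin(m-2s)$ and $\omega\, Spin(m-2s)$ are disjoint.

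To finish, observe that for $s$ even $S(C_2 \times O(m-2s)) = C_2 \times SO(m-2s)$ splits into two components $\{I_{2s}\} \oplus SO(m-2s)$ and $\{F\} \oplus SO(m-2s)$. The restriction of $\varrho$ to $H_{m,2s}$ is a two-to-one surjection onto $S(C_2 \times O(m-2s))$ with kernel $\{\pm 1\}$; each of our two cosets surjects onto one of the components, and both cosets already contain the full fiber $\{\pm 1\}$ (since $-1 \in Spin(m-2s)$ whenever $m-2s \ge 1$, being the square of any Clifford generator $e_j$ sitting inside a degree-two product). Therefore $H_{m,2s} = Spin(m-2s) \sqcup \omega\, Spin(m-2s)$. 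The main obstacle is the sign-tracking in the Clifford computation establishing $\omega_i e_{2i-1} \omega_i = e_{2i}$; once that identity is verified, the remainder is routine covering-group bookkeeping.
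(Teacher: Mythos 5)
Your argument follows essentially the same route as the paper's: identify $p^{-1}(f)$ with the unit vectors orthogonal to the reflecting hyperplane $y=x$ (the paper does this via its Lemma \ref{l2.0.2}, you do it by direct Clifford computation), embed $Spin(m-2s)$ via the last $m-2s$ generators, and finish with an index-two/covering count. That overall structure is sound, and your version is in fact more explicit than the paper's rather terse conclusion.

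There is, however, one false intermediate claim: the assertion that the $\omega_i$ commute because they involve pairwise disjoint Clifford generators. They do not. Any two orthogonal vectors $u,v\in\mR^m\subset\mathcal{C}_m$ satisfy $uv=-vu$, so the $\omega_i$ \emph{anticommute}; e.g. $\omega_1\omega_2=\tfrac12(e_1-e_2)(e_3-e_4)=-\omega_2\omega_1$. This does not damage the present lemma, for two reasons: the identity $\varrho(\omega)=p(\omega_1)\cdots p(\omega_s)$ needs only that $p$ is a homomorphism, and the \emph{images} $p(\omega_i)$ genuinely do commute since they act on disjoint coordinate planes, so the product is $F\oplus I_{m-2s}$ in any order. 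But the error would be fatal one step later: if the $\omega_i$ commuted you would get $\omega^2=\omega_1^2\cdots\omega_s^2=(-1)^s=1$ for every even $s$, whereas the correct computation (using anticommutativity) gives $\omega^2=(-1)^{s(s+1)/2}$, which is $-1$ for $s\equiv 2\pmod 4$ — exactly the sign distinction recorded in Remark \ref{romegasquare} and exploited throughout the rest of the paper (it is why $RH_{m,2s}$ has a different description for $s\equiv 2\pmod 4$). You should replace the commutativity claim with the homomorphism argument. One further small point: your parenthetical that $-1\in Spin(m-2s)$ "whenever $m-2s\ge 1$" is fine, but note that when $m=2s$ one still has $-1\in\ker\varrho=Spin(0)$ by convention, so the fiber count goes through in that degenerate case as well.
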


The key ingredient of the proof of Lemma \ref{l2.0.1} is the following statement, whose proof is discussed in detail in \cite[Lemma 6.14]{bd}. 

 \begin{lemma}\label{l2.0.2}
 $\mR^m \setminus \left \{0\right\} \subset \Gamma_m,$ and if $u \in \mR^m\setminus \left \{0 \right \}$, then $p(u)$ is the reflection in the hyperplane orthogonal to $u$. Also $p\Gamma_m \subset O(m).$     
 \end{lemma}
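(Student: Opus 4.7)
The plan is to split the statement into two parts: the first two assertions—that every nonzero vector lies in $\Gamma_m$ and that $p(u)$ acts as a reflection—follow from a single explicit computation in the Clifford algebra, while the inclusion $p\Gamma_m\subset O(m)$ requires a short extra argument exploiting the involution $\alpha$.

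For the first part, I would begin by noting that for $u\in\mR^m\setminus\{0\}$ the Clifford relation gives $u^2 = Q(u)\cdot 1 = -|u|^2 \neq 0$, so $u\in\mathcal{C}_m^{\ast}$ with inverse $u^{-1} = -u/|u|^2$. Polarising the Clifford identity $(u+v)^2 = Q(u+v)\cdot 1$ and subtracting $u^2$ and $v^2$ yields the anticommutation rule $uv + vu = -2\langle u,v\rangle\cdot 1$ for every $v\in\mR^m$. Since $\alpha(u) = -u$, a direct substitution gives
\[p(u)v \;=\; \alpha(u)\,v\,u^{-1} \;=\; \frac{u v u}{|u|^2} \;=\; v - 2\,\frac{\langle u,v\rangle}{|u|^2}\,u,\]
which visibly lies in $\mR^m$. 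This simultaneously certifies $u\in\Gamma_m$ and exhibits $p(u)$ as the reflection in the hyperplane orthogonal to $u$.

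For the inclusion $p\Gamma_m\subset O(m)$, I would first observe that $p$ is a group homomorphism from $\Gamma_m$ into $GL(V)$ (immediate from $\alpha(xy) = \alpha(x)\alpha(y)$), so it suffices to check that each $p(x)$ preserves the quadratic form $Q$. Fix $x\in\Gamma_m$ and $v\in V$, and set $w = p(x)v = \alpha(x)vx^{-1}$; by the defining property of $\Gamma_m$ we have $w\in V$, hence $\alpha(w) = -w$. On the other hand, since $\alpha$ is an algebra automorphism of order two,
\[\alpha(w) \;=\; \alpha^2(x)\,\alpha(v)\,\alpha(x^{-1}) \;=\; -\,x\,v\,\alpha(x)^{-1}.\]
Comparing the two expressions yields the key identity $\alpha(x)\,v\,x^{-1} = x\,v\,\alpha(x)^{-1}$. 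Substituting this identity into the second factor of $w^2$ collapses the product to $\alpha(x)\,v^2\,\alpha(x)^{-1}$; and because $v^2 = Q(v)$ is a central scalar, this equals $v^2$. Therefore $Q(p(x)v) = Q(v)$ for every $v\in V$, so $p(x)\in O(m)$.

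The main obstacle I anticipate is precisely spotting the symmetry $\alpha(x)vx^{-1} = xv\alpha(x)^{-1}$: without it the product $w^2$ does not simplify directly, and one is tempted to invoke heavier machinery such as the reversal anti-involution or the norm homomorphism on $\Gamma_m$. With this identity in hand, however, the entire lemma reduces to two short computations inside $\mathcal{C}_m$, and no appeal to Cartan--Dieudonn\'e or to generation of $\Gamma_m$ by vectors is needed.
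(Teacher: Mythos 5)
Your argument is correct, and every step checks out: the anticommutation rule $uv+vu=-2\langle u,v\rangle\cdot 1$ obtained by polarising $x^2=Q(x)\cdot 1$, the computation $p(u)v=uvu/|u|^2=v-2\frac{\langle u,v\rangle}{|u|^2}u$ establishing both $u\in\Gamma_m$ and the reflection statement, and the identity $\alpha(x)vx^{-1}=xv\alpha(x)^{-1}$ (forced by $\alpha(w)=-w$ for $w=p(x)v\in V$) which collapses $w^2$ to $\alpha(x)v^2\alpha(x)^{-1}=Q(v)\cdot 1$ and yields $Q(p(x)v)=Q(v)$. Note, however, that the paper does not prove this lemma at all: it simply quotes it from \cite[Lemma 6.14]{bd}, so the comparison is really with the proof in Br\"ocker--tom Dieck. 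There, the reflection part is the same direct Clifford computation you give, but the inclusion $p\Gamma_m\subset O(m)$ is usually obtained via the extra structure of the conjugation anti-involution and the norm homomorphism $N\colon\Gamma_m\to\mR^{\ast}$. Your route replaces that machinery with the single symmetry $\alpha(x)vx^{-1}=xv\alpha(x)^{-1}$, which makes the orthogonality argument shorter and entirely self-contained (norm preservation plus linearity already gives $p(x)\in O(m)$ in finite dimensions, so even the preliminary remark about $p$ landing in $GL(V)$ is dispensable); the cost is that you do not get the norm homomorphism as a by-product, which the reference later uses to cut out $Pin(m)$ and $Spin(m)$, exactly as this paper does when it defines $Pin(m)$ as the kernel of $N$.
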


 We proceed to the proof of Lemma \ref{l2.0.1}.

\begin{proof}
Recall that $H_{m,2s} = \varrho^{-1}(S(C_2 \times O(m-2s))$, where $S(C_2 \times O(m-2s))$ consists of all matrices of \(SO(m)\) of the form \( diag(M, M')\), where \(M \in C_2= \{ I_{2s\times 2s},F\}\) and \(M' \in O(m-2s)\).  From this representation, we observe that, \(Spin(m-2s) \subset H_{m,2s}\), where  \(Spin(m-2s)\) is generated by the last \((m-2s)\) vectors of the canonical basis of \(\mR^m\). Moreover, the identification of $FV_{m,2s}$ with $\displaystyle \frac{Spin(m)}{H_{m,2s}}$ via the double cover implies that \(Spin(m-2s)\) is an index 2 subgroup of \(H_{m,2s}.\)\\

\noindent To further obtain the missing component that makes up \(H_{m,2s}\), we need to determine \(p^{-1}(F).\) Since \(p\) is a homomorphism, it should be noted that it suffices to determine \(p^{-1}(f) = \begin{bmatrix}0 & 1 \\ 1 & 0 \end{bmatrix}.\) The matrix \(\begin{bmatrix}0 & 1 \\ 1 & 0 \end{bmatrix}\) is the reflection along the hyperplane $y=x $ in $\mR^2$. As per Lemma \ref{l2.0.2}, every non-zero vector $u$ in the line orthogonal to $y=x$ is mapped to \(f\) under the homomorphism \(p\). In particular, $\restr{p^{-1}}{Pin(2)}(f)$ is a point on the intersection of the unit circle and the line orthogonal to $y=x $, because \(Pin(2)\) consists only of unit normed vectors. That is, $\restr{p^{-1}}{Pin(2)}(f) \in \left \{\left (\frac{1}{\sqrt{2}}, -\frac{1}{\sqrt{2}}\right ), \left (-\frac{1}{\sqrt{2}}, \frac{1}{\sqrt{2}}\right)\right \}.$    Consequently, for \(s \equiv 0 \pmod 2\), the lemma is affirmed by the structure of \(Spin(m)\) as a subspace of \(Pin(m) \cap Cl_m^{even} \), where \(Cl_m^{even} = \mathcal{C}_m^0 \oplus \mathcal{C}_m^2 \oplus \mathcal{C}_m^4 \oplus \cdots\) is considered to be a vector space with natural grading.
\end{proof}

Having obtained \(H_{m,2s} \subset Spin(m)\), our next task is to explore the representation rings of the groups involved and the restriction homomorphism. Before we proceed with this analysis, we conclude this section with the following remarks about the element \(\omega \in H_{m,2s}\).

\begin{remark}\label{romegasquare}
\begin{enumerate}
\item[(1)] $\omega^2 = \pm 1.$ To be precise,
\[
\omega^2 = (\omega_1 \cdots \omega_{s})^2 = \begin{cases}
~1& \text{if} ~ s \equiv~ 0,3 \pmod 4\\
-1 & \text{if}~  s \equiv~ 1,2 \pmod 4 
\end{cases}
\]
\item[(2)] Although F is not a central element, \(\restr{p^{-1}}{Spin(m)}(F)\) is in the center of \(H_{m,2s}\) for the case in consideration, \(s \equiv 0 \pmod 2.\)
\end{enumerate}
\end{remark}

\section{Maximal Tori and Representation Rings}\label{srep_rings}

\noindent While the complex representations of \(Spin(m)\) are well-established in literature, determining the complex representations of \(H_{m,2s}\) demands careful attention. In general, for a compact, connected Lie group $G,$ its representations are obtained from those of its maximal torus (See  \cite{hu}).  In line with this theory, we study the maximal tori of \(Spin(m)\) with the aim of characterizing \(RH_{m,2s}\).

The standard maximal torus of $Spin(m)$ is given by 
\[
\tilde{T}= \{(\cos\theta_1 +e_1e_2 \sin\theta_1)\cdots(\cos\theta_n +e_{2n-1}e_{2n} \sin\theta_n)\mid 0 \leq \theta_i \leq 2\pi\}.
\]

Using this maximal torus, the complex representation ring of \(Spin(m)\) is derived as in \cite[Chapter 14, Theorem 10.3]{hu}. To ease the calculation of $Tor^{\ast}_{RSpin(m)}(RH_{m,2s}, \mathbb{Z}),$ we make use of the augmented (zero ranked) complex representations of $Spin(m)$. These representations have been formerly employed in the K-ring computation of the real projective Stiefel manifold as well. We refer to \cite[Page 32, Section 4]{aguz} and \cite[Page 3200, 6.2]{bh}. In Theorem \ref{trspin}, we describe 
 \(RSpin(m)\) in terms of these zero ranked complex representations. For completeness and clarity, we now explain the various symbols utilized in the theorem.\\
 
Let \(T^n\) denote the standard \(n\)-torus with \(RT^n = \mZ[z_1, z_1^{-1}, \ldots, z_n, z_n^{-1}] \), where \(z_i\) denotes the character of the projection of \(T^n\) onto the $i$-th \(\mS^1\) factor in \(\mS^1 \times \cdots \times~ \mS^1.\) 
\subsection{$RSpin(m)$ for $m \equiv 0 \pmod{2}$:}
\begin{enumerate}
\item Let $\Pi_k$ denote the $k$-th symmetrical function in the variables $\{z_i^2+ z_i^{-2}- 2\}$ for $i= 1, 2, \ldots, n.$ This is called {\it $k$-th Pontryagin class} \cite[Page~ 33]{aguz}. As mentioned earlier, we prefer \(\Pi_k\)s for our calculations over the standard exterior powers as \(\Pi_k\)s are in the kernel of the augmentation map \(RSpin(m) \to \mZ\).
\item Let
\[
\Delta_{n}^+= \sum\limits_{\substack{\epsilon_i= \pm1\\ \prod \epsilon_1= 1}} z_1^{\epsilon_1}\cdots z_n^{\epsilon_n}, \quad \Delta_{n}^-= \sum\limits_{\substack{\epsilon_i= \pm 1\\ \prod \epsilon_1= -1}} z_1^{\epsilon_1}\cdots z_n^{\epsilon_n} 
\]
\end{enumerate}
Then, we have
\[
R(Spin(2n))= \mathbb{Z}[\Pi_1, \ldots, \Pi_{n-2}, \Delta_{n}^+, \Delta_{n}^-].
\]

\subsection{$RSpin(m)$ for $m \equiv 1 \pmod{2}$:}
\begin{enumerate}
    \item Let $\Pi_i$ be as mentioned earlier.
    \item  Let
\[
\Delta_{n}= \prod_i(z_i+ z_i^{-1}).
\]
\end{enumerate}

Consequently, 
\[
R(Spin(2n+1))= \mathbb{Z}[\Pi_1, \ldots, \Pi_{n-1}, \Delta_{n}].
\]
Moreover, in \(RSpin(m)[t]\), we have
\begin{align*}
\Pi[t] &= \sum\limits_{i \geq 0} t^i \Pi_i= \prod\limits_{i=1}^n (1+ t(z_i+ z_i^{-1})^2),\\
\Delta_n[t] &= \prod\limits_{i=1}^n (z_i+ tz_i^{-1}).
\end{align*}
See \cite[Page 3191]{bh} for details. These generating functions enable us to establish the relationship between the generators of \(RSpin(m)\) efficiently. We encapsulate the preceding discussions on \(RSpin(m)\) within the following theorem.

\begin{theorem}(\cite[Page 3191]{bh})\label{trspin}
With the above notation,\\
(1) $RSpin(2n + 1)$ is the polynomial ring $\mathbb{Z}[\Pi_1,\ldots, \Pi_{n-1}, \delta_n]$, where
\[
\delta_n = \Delta_n-2^n,  \quad \Delta_n^2= \Pi_n+ 4\Pi_{n-1}+ \cdots+ 2^{2n}. 
\]
(2) $RSpin(2n)$ is the polynomial ring $\mathbb{Z}[\Pi_1,\ldots, \Pi_{n-2}, \delta_n^+, \delta_n^-]= [\Pi_1,\ldots, \Pi_{n-2}, \chi, \delta_n^+],$ where
\[
\delta_n^{\pm} = \Delta_n^{\pm} -2^{n-1},\quad \chi = \Delta_n^+ - \Delta_n^-, \quad \Delta_n^2= \Pi_n+ 4\Pi_{n-1}+ \cdots+ 2^{2n}, \quad \chi^2 = \Pi_n.
\]
\end{theorem}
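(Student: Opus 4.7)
The plan is to derive Theorem \ref{trspin} from the classical Weyl-character description of $RSpin(m)$. Since $Spin(m)$ is compact, connected and simply connected, $RSpin(m)$ identifies with the Weyl-invariant subring of $RT = \mathbb{Z}[z_1^{\pm 1}, \ldots, z_n^{\pm 1}]$, where the Weyl group is the hyperoctahedral group $(\mathbb{Z}/2)^n \rtimes S_n$ in the odd case and its index-two subgroup of even sign changes in the even case. From standard references such as \cite{hu} one has a polynomial presentation with generators the exterior-power classes $\lambda^1, \lambda^2, \ldots$ of the defining representation together with the spin character $\Delta_n$ (odd case) or the two half-spin characters $\Delta_n^\pm$ (even case). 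The task is to repackage this presentation in the Pontryagin / augmented-spin variables that will be needed in the later Hodgkin spectral sequence computations.

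First I would show that replacing the exterior-power classes by the Pontryagin classes does not alter the generated subring. Restricted to $T$, each $\lambda^k$ is an integer polynomial in $\{y_i = z_i + z_i^{-1}\}$, while $\Pi_k$ is the $k$-th elementary symmetric function in $\{y_i^2 - 4\}$. Regrouping shows that the transition from $e_k(y_1,\ldots,y_n)$ to $\Pi_k$ is graded triangular with $\pm 1$ leading coefficient, hence a $\mathbb{Z}$-algebra automorphism of $\mathbb{Z}[y_1,\ldots,y_n]^{S_n}$; so the $\Pi_k$ and the $\lambda^k$ generate the same symmetric subring. The passages $\Delta_n \mapsto \delta_n = \Delta_n - 2^n$ and $\Delta_n^\pm \mapsto \delta_n^\pm = \Delta_n^\pm - 2^{n-1}$ are integer translations which merely move these classes into the augmentation ideal.

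Next I would verify the quadratic identities by direct torus-level computation using the generating functions already set up in the excerpt. For $Spin(2n+1)$, starting from $\Delta_n = \prod_i (z_i + z_i^{-1})$ one obtains
\[
\Delta_n^2 = \prod_{i=1}^n (z_i^2 + z_i^{-2} + 2) = \prod_{i=1}^n \bigl((z_i^2 + z_i^{-2} - 2) + 4\bigr) = \sum_{k=0}^n 4^{\,n-k}\, \Pi_k,
\]
which is $\Pi_n + 4\Pi_{n-1} + \cdots + 2^{2n}$ as claimed. For $Spin(2n)$, expanding the signed sum yields $\Delta_n^+ - \Delta_n^- = \prod_i (z_i - z_i^{-1})$, whence
\[
\chi^2 = \prod_{i=1}^n (z_i - z_i^{-1})^2 = \prod_{i=1}^n (z_i^2 + z_i^{-2} - 2) = \Pi_n,
\]
and the same expansion applied to $\Delta_n^+ + \Delta_n^-$ reproduces the formula $\Delta_n^2 = \sum_k 4^{n-k}\Pi_k$. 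It should be stressed that these identities are not relations in the polynomial presentation; they determine the redundant class $\Pi_n$ (and in the even case also $\Pi_{n-1}$) in terms of the chosen generators.

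The main obstacle is certifying that the new presentation remains \emph{polynomial}, i.e.\ that no hidden relations appear when changing bases. I would settle this by observing that the change of variables above is a graded $\mathbb{Z}$-algebra automorphism of the ambient polynomial ring, so the Hilbert series of the classical and new presentations match degree by degree; the torus-level identities above are the only algebraic constraints forced inside $RT$, and freeness of the classical presentation in $\lambda^k, \Delta_n^{(\pm)}$ transports to freeness in $\Pi_k, \delta_n^{(\pm)}$.
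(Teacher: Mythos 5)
The paper offers no argument for this statement at all: it is imported verbatim from \cite[Page 3191]{bh}, so any comparison is really with the standard derivation in Barufatti--Hacon/Husemoller. Your overall strategy is exactly that standard derivation: start from $RSpin(m)\cong R(T)^W$ and the classical polynomial presentation in $\lambda^1,\lambda^2,\ldots$ and $\Delta_n$ (resp.\ $\Delta_n^{\pm}$), change variables to the augmented classes, and check the quadratic identities on the torus. Your verifications of $\Delta_n^2=\sum_k 4^{\,n-k}\Pi_k$ and $\chi^2=\Pi_n$ are correct, as is the remark that these are not relations but expressions for the omitted classes $\Pi_n$ (and $\Pi_{n-1}$, via $\Delta_n^+\Delta_n^-=\tfrac14(\Delta_n^2-\chi^2)$).

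There is, however, one step that is wrong as written: the claim that the passage from $e_k(y_1,\ldots,y_n)$ to $\Pi_k$ is a triangular change of variables giving an automorphism of $\mathbb{Z}[y_1,\ldots,y_n]^{S_n}$. With your $y_i=z_i+z_i^{-1}$, the subring $\mathbb{Z}[\Pi_1,\ldots,\Pi_n]$ consists of Laurent polynomials in the $z_i^2$ only, so it does \emph{not} contain $e_1(y)=\sum_i(z_i+z_i^{-1})$; it is a proper subring of $\mathbb{Z}[y]^{S_n}$, and the two families are not related triangularly (the $\Pi_k$ have degree $2k$ in the $y_i$). The comparison you actually need is between $\lambda^k|_T=e_k\bigl(\{z_i^{\pm2}\}\cup\{1\}\bigr)$ and $\Pi_k=e_k\bigl(\{z_i^2+z_i^{-2}-2\}\bigr)$: setting $u_i=z_i^2+z_i^{-2}$, the generating functions $\sum_k\lambda^k t^k=(1+t)\prod_i(1+tu_i+t^2)$ and $\sum_k\Pi_k t^k=\prod_i\bigl(1+t(u_i-2)\bigr)$ show that both $\{\lambda^1,\ldots,\lambda^k\}$ and $\{\Pi_1,\ldots,\Pi_k\}$ generate $\mathbb{Z}[e_1(u),\ldots,e_k(u)]$ for every $k$, by unipotent triangular substitutions. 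With that repair (and the trivial translations $\Delta\mapsto\delta$), your transport-of-freeness argument from the classical presentation goes through; the concluding appeal to Hilbert series is unnecessary once the change of variables is known to be unipotent triangular over $\mathbb{Z}$.
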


\subsection{Representation ring of \(H_{m,2s}\)}\label{s3.3}
In Lemma \ref{l2.0.1}, we deduced that
\[
H_{m,2s} = Spin(m-2s) \sqcup \omega Spin(m-2s),~\text{where,}~\omega^2= \pm 1. 
\] Note that \(\omega\) is in the center of \(H_{m,2s}\). Additionally, for \(\omega^2=1\), for instance when \(m \equiv 1 \pmod 2 \) and \(s \equiv 0 \pmod 4\), we get a split short exact sequence, 
\[
\xymatrix
{
0 \ar[r] & Spin(m- 2s) \ar[r] & H_{m, 2s} \ar[r] & C_2 \ar[r] \ar@/_1.5pc/_{\omega}[l] & 0
}
\]
which gives us
\[
H_{m,2s} \cong C_2 \times Spin(m- 2s).
\]
An immediate consequence of this observation is
\[
RH_{m,2s} \cong RC_2 \otimes RSpin(m- 2s).
\]
But for \(\omega^2 = -1\) , for instance \( s\equiv 2 \pmod 4\), the exact sequence doesn't split and we do not have the advantage of determining \(RH_{m,2s}\) as a tensor product. Instead, we encounter the semi-direct product \(H_{m,2s} \cong C_2 \rtimes Spin(m-2s)\) and to continue with the study of \(RH_{m,2s}\), we have to analyse those representations induced by the semi-direct product. However, rather than taking this approach, we choose to overcome the disadvantage by constructing the generators of \(RH_{m,2s}\) as representations derived from that of the maximal torus of \(Spin(m)\). (The methodology is explained at the end of this section.) This approach also facilitates the investigation of the  $RG$-module structure on $RH_{m,2s}$ through the restriction map,  \(\Res\). Furthermore, since \(\omega \notin \tilde{T},\) the standard maximal torus is not a suitable candidate to perform the above mentioned strategy. So, we resort to find a (conjugate) maximal torus  \(T = h^{-1}\tilde{T}h\) that contains \(\omega\)  for some \(h \in Spin(m)\), so that we have the following commutative diagram,
\begin{equation}\label{dprecommutative}
\xymatrix{
\langle{\omega}\rangle \times Spin(m) \ar^{\mu}[r] & H_{m,2s} \subseteq Spin(m)\\
\langle{\omega}\rangle \times T(n-s) \ar^{\mu}[r] \ar^i[u] & T(n) \ar^j[u]
}
\end{equation}
Here, \(i\) and \(j\) are inclusions and \(\mu\) denotes multiplication. 

    The following lemma gives explicit description of the required maximal torus. Its proof is by direct calculation and hence we omit it.

\begin{lemma}
The element $\omega \in T$ where \(T = h^{-1}\tilde{T}h\) with $h= h_1h_2\cdots h_{s/2}$ where
\[
h_i= 1+ e_{4i-3}e_{4i}- e_{4i-2}e_{4i}+ e_{4i-1}e_{4i}.
\]
To be precise, $T$ is given by
\begin{multline*}
T= \{(\cos\theta_1 +v_1v_2 \sin\theta_1)(\cos\theta_2 -\omega_1\omega_2 \sin\theta_2)\cdots (\cos\theta_s- \omega_{s-1}\omega_s\sin\theta_s)\\(\cos\theta_{s+1}+ e_{2s+1}e_{2s+2}\sin\theta_{s+1})\cdots(\cos\theta_n +e_{2n-1}e_{2n} \sin\theta_n)\mid 0 \leq \theta_i \leq 2\pi\},
\end{multline*}
where $v_i= \displaystyle{\frac{1}{\sqrt{2}}(e_{2i-1}+ e_{2i})}$ and $\omega_i = \displaystyle{\frac{1}{\sqrt{2}}(e_{2i-1} - e_{2i})}.$
\end{lemma}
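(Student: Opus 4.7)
The plan is to reduce the two claims, namely the formula for $T = h^{-1}\tilde{T}h$ and the containment $\omega \in T$, to a block-wise Clifford-algebra computation. A short calculation first shows that each $h_i$ is invertible in $\mathcal{C}_m$: one checks directly that $h_i \cdot (1 - e_{4i-3}e_{4i} + e_{4i-2}e_{4i} - e_{4i-1}e_{4i}) = 4$. More importantly, the factors $h_i$ are even-degree elements supported on disjoint quadruples $\{e_{4i-3}, e_{4i-2}, e_{4i-1}, e_{4i}\}$, so they commute pairwise and commute with every bivector $e_{2k-1}e_{2k}$ for $k \notin \{2i-1, 2i\}$. Conjugation by $h = h_1 \cdots h_{s/2}$ therefore decouples into $s/2$ independent block conjugations, each affecting only the two $\tilde{T}$-factors indexed by $\theta_{2i-1}$ and $\theta_{2i}$, while fixing the tail factors indexed by $\theta_{s+1}, \ldots, \theta_n$.

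Within the $i$-th block I would verify the two identities
\[
h_i^{-1}\, e_{4i-3}e_{4i-2}\, h_i \;=\; v_{2i-1}v_{2i}, \qquad h_i^{-1}\, e_{4i-1}e_{4i}\, h_i \;=\; -\,\omega_{2i-1}\omega_{2i},
\]
by direct expansion, using $e_j^2 = -1$ and $e_j e_k + e_k e_j = 0$ for $j \neq k$. Substituting these into each factor $\cos\theta + (\text{bivector})\sin\theta$ of $\tilde{T}$ produces exactly the presentation of $T$ asserted in the lemma.

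For the containment $\omega \in T$, I would specialise the parameters as $\theta_{2i-1} = 0$ and $\theta_{2i} = -\pi/2$ for $i = 1, \ldots, s/2$, together with $\theta_j = 0$ for $j > s$. Each odd-indexed factor collapses to $1$, each even-indexed factor evaluates to $\omega_{2i-1}\omega_{2i}$, and the tail factors are again $1$. Since the pairwise products $\omega_{2i-1}\omega_{2i}$ have pairwise-disjoint supports, they commute in $\mathcal{C}_m$, so the ordered product telescopes to $\omega_1 \omega_2 \cdots \omega_s = \omega$.

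The one nonformal step is the pair of block-wise conjugation identities above: since $h_i$ is a sum of four terms, the expansion of $h_i^{-1}(\text{bivector})h_i$ produces on the order of sixteen bivector summands whose cancellation under the anticommutation relations must be tracked carefully to yield the stated outputs. This is exactly the direct calculation the authors defer; once it is in hand, the remainder of the argument is formal bookkeeping.
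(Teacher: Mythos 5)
Your proposal is correct, and it is precisely the ``direct calculation'' that the paper declares and omits: the block identities $h_i^{-1}e_{4i-3}e_{4i-2}h_i = v_{2i-1}v_{2i}$ and $h_i^{-1}e_{4i-1}e_{4i}h_i = -\omega_{2i-1}\omega_{2i}$ do check out (with $h_i^{-1}=\tfrac14(1-e_{4i-3}e_{4i}+e_{4i-2}e_{4i}-e_{4i-1}e_{4i})$, as you note), the disjoint-support commutation argument legitimately reduces everything to these blocks, and the specialisation $\theta_{2i-1}=0$, $\theta_{2i}=-\pi/2$ is exactly the one the paper records in its subsequent remark for exhibiting $\omega\in T$.
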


\begin{remark}
(i) We see that, by taking $\theta_{2i-1}= 0$ and $\theta_{2i}= -\pi/2,$ the element $\omega \in T.$ \\
(ii) Choice of $T$ is not unique as it varies with \(h\). One of the ways of  getting an appropriate $h$ is by making the most of the vector space structure of \(Spin(m).\) For instance, \(Spin(4) \) is generated by the basis set,
\[\left\{1, e_1e_2e_3e_4, e_ie_j |i \neq j, 1\leq i,j \leq 4 \right\}.\]

This basis allows us to compute \(h\) as a linear combination of its elements such that \(hw_1w_2h^{-1}= \tilde{t} \in \tilde{T}\). In particular, for \(\tilde{t} = -e_3e_4 \in \tilde{T}\), we get $h= 1+ e_1e_4- e_2e_4+ e_3e_4$.
\end{remark}

\noindent Further, we have a homomorphism \(\tau\) from the standard $n$-torus \(T^n\) to the maximal torus \(T(n)\) given by
\begin{multline*}
\tau(e^{i\theta_1}, \ldots, e^{i\theta_n})= (\cos\theta_1 +v_1v_2 \sin\theta_1)(\cos\theta_2 -\omega_1\omega_2 \sin\theta_2)\cdots\\ \cdots (\cos\theta_s- \omega_{s-1}\omega_s\sin\theta_s)(\cos\theta_{s+1}+ e_{2s+1}e_{2s+2}\sin\theta_{s+1})\cdots(\cos\theta_n +e_{2n-1}e_{2n} \sin\theta_n).
\end{multline*}
Let \( \tilde{\omega}= (\underbrace{1, -i, 1,\ldots,-i}_{s~\text{terms}},\underbrace{1,1\ldots,1}_{n- s~\text{terms}})\). We observe that \(\tau(\tilde{\omega})= \omega. \)
Let us define $\tilde{\Omega} = \langle\tilde{\omega}\rangle $ and $\Omega=\langle{\omega}\rangle $. Through the homomorphism \(\tau\), we extend the commutative diagram (\ref{dprecommutative}) to,
\begin{equation}\label{dcommutative}
\xymatrix{
\tilde\Omega \times Spin(m) \ar^{\mu}[r] & H_{m,2s} \subseteq Spin(m)\\
\tilde\Omega \times T(c) \ar^{\mu}[r] \ar^i[u] & T(n) \ar^j[u]\\
\tilde{\Omega} \ar^{\tau}[u] \times T^c \ar^{\mu}[r] & T^n \ar^{\tau}[u]
}
\end{equation}

Now, using the commutative diagram \ref{dcommutative}, we obtain an explicit description of \(RH_{m,2s}\) by employing a technique adapted from \cite[Page 235]{bd}. Given the short exact sequence \[0 \to \tilde\Omega \to H_{m,2s} \to Spin(m-2s) \to 0,\] the representations of \(H_{m,2s}\) possess the following properties: 
\begin{enumerate}[label=Property \arabic*.]
\item We can identify the representations of \(R\tilde\Omega \otimes RSpin(m-2s)\) with the ones in the image, \(\mu^{\#}(RH_{m,2s})\), by checking for their triviality on \(Ker(\mu)\).
\item  Let \(\rho\) be any arbitrary representation in \(R\tilde\Omega \otimes RSpin(m-2s).\) Let \(r_h \in Aut(\tilde\Omega \times Spin(m-2s))\) denote a right translation by \(h\). \(r_h\) induces a ring homomorphism \(r_h^{\ast} \in Aut(R\tilde\Omega \otimes RSpin(m-2s))\)  defined by \(\rho(g)= \rho(gh).\) For \(h \in Ker(\mu)\), \(r_h^{\ast}\) fixes \(\mu^{\#}(RH_{m,2s})\) elementwise. 
\end{enumerate}

By isolating those representations that satisfy the above two properties, we identify the ring \(RH_{m,2s}\) with the subring \(\mu^{\#}(RH_{m,2s}) \subset R\tilde\Omega \otimes RSpin(m-2s)\). As we move forward, it becomes imperative to address four distinct cases depending on \(m\) and \(s\), each forming a separate section for thorough exploration and analysis.\\

\noindent Before we begin our calculations, we draw the reader's attention to some essential relations used in the evaluation of  \(\Res: RSpin(m) \to RH_{m,2s}\).
\begin{lemma}{\label{simplifications}} 
Let \(y = \theta - 1\).
\begin{enumerate}[label=(\roman*)] 
\item[(1)] \(\theta(\theta+1) = \theta+1\).
\item[(2)] For \(j > 0, y^j = (-2)^{j-1}y\).
\item[(3)]  For \(j > 0,\) \((1 + \theta)^j = 2^{j-1}(1+\theta)\).
\item[(4)] For \(t =\pm 1\), \((1+t)^j = 2^{j-1}(1+t)\).
\item[(5)] For \(t =\pm 1\), \((1+t\theta)^j = 2^{j-1}(1+t\theta).\)
\end{enumerate}
\end{lemma}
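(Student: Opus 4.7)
The plan is to first identify the relation governing $\theta$: from the discussion in Section~\ref{srep_rings} we are working inside $RC_2 \otimes RSpin(m-2s)$ (the case $\tilde\Omega \cong C_2$ when $\omega^2 = 1$), so $\theta$ is the nontrivial character of $C_2$ and must satisfy $\theta^2 = 1$. Once this single defining relation is in hand, each of the five assertions follows by an elementary computation at $j = 1$ or $j = 2$, extended where necessary by a one-line induction on $j$.

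For (1), I expand directly using $\theta^2 = 1$ to get $\theta(\theta+1) = \theta^2 + \theta = 1 + \theta$. For (2), I compute the base case $y^2 = (\theta - 1)^2 = \theta^2 - 2\theta + 1 = 2 - 2\theta = -2y$, and then induct: assuming $y^{j-1} = (-2)^{j-2}y$, we obtain $y^j = y \cdot y^{j-1} = (-2)^{j-2} y^2 = (-2)^{j-2}(-2y) = (-2)^{j-1} y$. Part (3) is proved identically starting from $(1+\theta)^2 = 1 + 2\theta + \theta^2 = 2(1+\theta)$.

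For (4), I split into cases. When $t = 1$, $(1+t)^j = 2^j = 2^{j-1}(1+t)$ trivially; when $t = -1$, both sides vanish for every $j \geq 1$. For (5), the case $t = 1$ is exactly (3); for $t = -1$, the base case $(1-\theta)^2 = 1 - 2\theta + \theta^2 = 2(1-\theta)$ combined with the same induction as in (3) closes the argument.

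The only conceptual point — and it is a mild one — is recognizing the implicit relation $\theta^2 = 1$ from the context; there is no real obstacle. The lemma is essentially bookkeeping, collecting identities in $R\tilde\Omega$ that will be invoked repeatedly in the subsequent $\mathrm{Tor}$-computations describing $K^{\ast}(FV_{m,2s})$, so the content lies in packaging the relations rather than in any depth of argument.
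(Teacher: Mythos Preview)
Your proof is correct, and since the paper states this lemma without proof (treating the identities as self-evident), there is nothing substantive to compare against. One small contextual inaccuracy worth noting: you write that $\tilde\Omega \cong C_2$ when $\omega^2 = 1$, but in fact $\tilde\Omega = \langle \tilde\omega \rangle \cong \mathbb{Z}/4$ always (since $\tilde\omega$ has entries $-i$); the element $\theta$ is identified with $\phi^2$ inside $R\tilde\Omega = \mathbb{Z}[\phi]/(\phi^4 - 1)$, which is why $\theta^2 = 1$. This does not affect your argument, since the only input you actually use is the relation $\theta^2 = 1$, and that holds for exactly the reason the paper gives.
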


Furthermore, it is important to recall that
\[
\tau(\tilde{\omega})= \omega, ~\text{where} ~ \tilde{\omega}= (\underbrace{1, -i, 1,\ldots,-i}_{s~\text{terms}},\underbrace{1,1\ldots,1}_{n- s~\text{terms}}).
\]
 From the commutative diagram \ref{dcommutative}, for any generic element $\xi = (1, \ldots, 1, e^{i\theta_{c+1}}, \ldots, e^{i\theta_{n}}) \in T^c$, where $m-2s= 2c$ or $2c+1$, its image under \(\mu^{\#}\) can directly be inferred as follows:  \\
\begin{lemma}\label{limageofz_i}
\[
\mu^{\#}(z_i)= \begin{cases}
~~1  ~~~~~\text{for} ~ i \leq s ~\text{and}~ i ~\text{odd,}\\
-\phi  ~~~\text{for} ~ i \leq s ~\text{and}~ i ~\text{even,}\\
~~z_i ~~~~~\text{for} ~ i > s.
\end{cases}
\]
\end{lemma}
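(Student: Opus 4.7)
The plan is a direct evaluation of the pullback $\mu^{\#}\colon RT^n \to R(\tilde{\Omega}\times T^c)$ on each standard generator $z_i$, reading off $\mu$ from the bottom row of the commutative diagram (\ref{dcommutative}). By construction $\mu$ is coordinatewise multiplication in $T^n$, where $\tilde{\Omega} = \langle\tilde{\omega}\rangle$ is embedded via the explicit element
\[
\tilde{\omega} = (\underbrace{1,\,-\sqrt{-1},\,1,\,\ldots,\,1,\,-\sqrt{-1}}_{s\ \text{entries}},\ \underbrace{1,\,\ldots,\,1}_{c\ \text{entries}}),
\]
and $T^c$ is embedded as the subtorus supported in the last $c=n-s$ coordinates of $T^n$ (so any $\xi \in T^c$ has $\xi_j = 1$ whenever $j \leq s$). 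Since $z_i$ reads off the $i$-th coordinate, the value of $\mu^{\#}(z_i)$ on a generic pair $(\tilde{\omega}^k,\xi)$ reduces to $\tilde{\omega}_i^{k}\cdot \xi_i$.

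From here the verification splits into three cases according to the position of the index $i$. For $i > s$ we have $\tilde{\omega}_i = 1$ and $\xi_i = e^{\sqrt{-1}\,\theta_i}$, so $\mu^{\#}(z_i)$ is the character $z_i$ on $T^c$. For $i \leq s$ with $i$ odd, both $\tilde{\omega}_i = 1$ and $\xi_i = 1$, giving $\mu^{\#}(z_i) = 1$. For $i \leq s$ with $i$ even, $\tilde{\omega}_i = -\sqrt{-1}$ and $\xi_i = 1$, so $\mu^{\#}(z_i)$ sends $(\tilde{\omega}^k,\xi)$ to $(-\sqrt{-1})^k$. Fixing the generator $\phi \in R\tilde{\Omega}$ by the convention $\phi(\tilde{\omega}) = \sqrt{-1}$, this last expression is precisely $(-\phi)(\tilde{\omega}^k)$, so $\mu^{\#}(z_i) = -\phi$.

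There is no serious obstacle here; the lemma is essentially a bookkeeping statement showing how the product decomposition $\tilde{\Omega}\times T^c$ distributes the standard characters of $T^n$ between the two factors. The only care required is to fix the convention for $\phi$ consistently so that the sign in $-\phi$ emerges cleanly. The real significance of the lemma lies downstream: combined with the expressions of $\Pi_k$, $\Delta_n$, and $\Delta_n^{\pm}$ as (Laurent) polynomials in the $z_i$, it immediately yields the restriction map $\Res\colon RSpin(m)\to RH_{m,2s}$ on each generator, which is the essential input for the Tor calculations driving the Hodgkin spectral sequence in the subsequent sections.
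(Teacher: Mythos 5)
Your argument is correct and is precisely the computation the paper leaves implicit: the lemma is stated there without proof, as a direct inference from the commutative diagram, and your coordinatewise evaluation of $z_i$ on a pair $(\tilde\omega^k,\xi)$ is exactly what that inference amounts to. One caveat on the even case: the character $(\tilde\omega^k,\xi)\mapsto(-i)^k$ is the element $\phi^{3}=\phi^{-1}$ of $R\tilde\Omega$, not the virtual representation $-\phi$, and your justifying identity $(-i)^k=(-\phi)(\tilde\omega^k)=-\,i^k$ already fails at $k=2$. Since the paper itself writes $-\phi$ and only ever uses it through expressions such as $(-\phi)+(-\phi)^{-1}=-(\phi+\phi^{3})$ raised to the power $s/2$ with $s/2$ even, where the discrepancy with $\phi^{3}$ is invisible, this is an abuse of notation you have inherited rather than introduced; still, the sentence asserting that identity is not literally true and would be better replaced by the observation that the character in question is $\phi^{3}$, which the paper denotes $-\phi$.
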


\section{\(K^{\ast}(FV_{m,2s})\) for \(m \equiv 1 \pmod 2 \) and \(s \equiv 0 \pmod 4\)}\label{smoddsequiv0}
As a first step of our computations, we characterize the generators of the ring \(RH_{m,2s}\) by looking for those representations that possess the properties mentioned in Section \ref{s3.3}.
\subsection{$RH_{m,2s}$}\label{RHsubring}  

In this scenario, it is notable that \(m\equiv 1 \pmod 2\) and \(Ker(\mu) = \{(1,1), (\tilde\omega^2,1)\}\). Consequently,  \(R\tilde\Omega = R\mZ/4\mZ = \mZ[\phi]/(\phi^4=1)\) with \(\phi(\tilde\omega) = i\) and \(RSpin(2c+1)= \mZ[\Lambda^1, \Lambda^2, \cdots, \Lambda^{c-1}, \Delta_c]\). \(R(H_{m,2s}/Spin(2c+1)) = R(\mZ/2\mZ) = \mZ[\theta]/(\theta^2=1)\) with the identification \(\theta \sim \phi^2\).

Let \(\rho\) be an arbitrary element of \(R\tilde\Omega \otimes RSpin(2c+1).\) We first check if \(\rho\) is an element of \(RH_{m,2s}\) by testing its triviality on \(Ker(\mu)\). That is, for instance, if \(\rho = \phi\Lambda^i, \rho((\tilde\omega^2,1)) = \phi(\tilde\omega^2)\otimes\Lambda^i(1)) = -I\), then  \(\phi\Lambda^i \notin R\tilde\Omega \otimes RSpin(2c+1).\) This first check on Property 1 is shown by the first two columns of the table below.

As for the check on Property 2, we look for those elements of \(R\tilde\Omega \otimes RSpin(2c+1)\) that remain fixed by \(r_h^{\ast}\). Let \(g \in \tilde\Omega\) and \(g' \in Spin(2c+1).\) For \(\rho = \phi\Lambda^i\), \(\rho(g,g') = \phi(g) \otimes \Lambda^i(g')\). \(\phi(g) \otimes \Lambda^i(g')\) can be evaluated for the four values of \(g \in \{1, \tilde\omega, \tilde\omega^2, \tilde\omega^3\}\) as in the third column. Now, the ring homomorphism resulting due to the translation of elements of \(\tilde\Omega \otimes Spin(2c+1)\) by \(h \in Ker \mu\), denoted by \(r_h^{\ast}\), is calculated in the fourth column for varying values of \(\rho\) and \(g\). Finally, we deduce by comparing the third and fourth column that \(r_h^\ast (\rho) = \pm \rho\) and \(RH_{m,2s}\) is the ring generated by those representations for which \(r_h^\ast(\rho) = \rho.\)\\
\begin{tabular}{|c|c|c|c|c|c|c|c|c|c|c|}
 \hline
 $\rho$  & $\rho(\tilde\omega^2,1)$ &  \multicolumn{4}{c|}{$\rho(g,g')$}& \multicolumn{4}{c|}{$\rho(\tilde\omega^2g,g')$}& $r^{\ast}_{(\tilde\omega^2,1)}$\\
 \hline
       &     & $g=1$ & $g=\tilde\omega$ & $g=\tilde\omega^2$ & $g=\tilde\omega^3$ & $g=1$ & $g=\tilde\omega$ & $g=\tilde\omega^2$ & $g=\tilde\omega^3$ & \\
 \hline
 $\Lambda^i$ & I  & $\Lambda^i$ & $\Lambda^i$ & $\Lambda^i$ & $\Lambda^i$ & $\Lambda^i$ & $\Lambda^i$ & $\Lambda^i$ & $\Lambda^i$ & $\Lambda^i$\\
 $\Delta_c$ & I &  $\Delta_c$ &  $\Delta_c$ &  $\Delta_c$ &  $\Delta_c$ &  $\Delta_c$ &  $\Delta_c$ &  $\Delta_c$ &  $\Delta_c$ &  $\Delta_c$\\
 $\phi \Lambda^i$ & -I & $\Lambda^i$ & $i\Lambda^i$ & $-\Lambda^i$ & $-i\Lambda^i$ & $-\Lambda^i$ & $-i\Lambda^i$ & $\Lambda^i$ & $i\Lambda^i$ & $-\phi\Lambda^i$\\
 $\phi \Delta_c$ & -I &  $\Delta_c$ &  $i\Delta_c$ &  $-\Delta_c$ &  $-i\Delta_c$ &  $-\Delta_c$ &  $-i\Delta_c$ &  $\Delta_c$ &  $i\Delta_c$ &  $-\phi\Delta_c$\\
 $\phi^2  \Lambda^i$ & I & $\Lambda^i$ & $-\Lambda^i$ & $\Lambda^i$ & $-\Lambda^i$ & $\Lambda^i$ & $-\Lambda^i$ & $\Lambda^i$ & -$\Lambda^i$ & $\phi^2\Lambda^i$\\
 $\phi^2  \Delta_c$ & I &  $\Delta_c$ &  -$\Delta_c$ &  $\Delta_c$ &  $-\Delta_c$ &  $\Delta_c$ &  $-\Delta_c$ &  $\Delta_c$&  -$\Delta_c$ & $\phi^2 \Delta_c$\\
 $\phi^3  \Lambda^i$ & -I & $\Lambda^i$& $-i\Lambda^i$ & $-\Lambda^i$ & $i\Lambda^i$ & $-\Lambda^i$ & $i\Lambda^i$ & $\Lambda^i$ & $-i\Lambda^i$ & $-\phi^3\Lambda^i$\\
 $\phi^3 \Delta_c$ & -I &  $\Delta_c$ &  $-i\Delta_c$ &  $-\Delta_c$ &  $i\Delta_c$ &  $-\Delta_c$ &  $i\Delta_c$ &  $\Delta_c$ &  $-i\Delta_c$ &  $-\phi^3\Delta_c$
 \\
 \hline
\end{tabular}\\

Suppose we denote the k-Pontryagin classes corresponding to the ring \(RH_{m,2s}\) by \(\bar{\Pi}_1, \cdots, \bar{\Pi}_{c-1}\), from the above table we have 
\begin{theorem}\label{tRHmodds0mod4}
\[
RH_{m,2s} = \mZ[\Lambda^1, \cdots, \Lambda^{c-1}, \Delta_c, \theta]/\langle\theta^2-1 \rangle = \mZ[\bar{\Pi}_1, \cdots, \bar{\Pi}_{c-1}, \bar{\Delta}_c, y]/\langle y^2+2y \rangle, 
\]
with  \[ \Delta_c^2 = \sum_{i=0}^c 2^{2(c-i)}\bar{\Pi}_i.\]
    
\end{theorem}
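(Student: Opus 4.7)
The plan is to identify $RH_{m,2s}$ explicitly by intersecting $R\tilde\Omega \otimes RSpin(2c+1)$ with the two conditions of Section \ref{s3.3}, exactly as the table preceding the theorem tabulates for the natural generators. Since $\mathrm{Ker}(\mu)=\{(1,1),(\tilde\omega^2,1)\}$, we have $R\tilde\Omega = \mZ[\phi]/\langle\phi^4-1\rangle$, and Theorem \ref{trspin}(1) gives $RSpin(2c+1)=\mZ[\Lambda^1,\ldots,\Lambda^{c-1},\Delta_c]$, so every generator of the ambient ring takes the form $\phi^a\rho$ with $a\in\{0,1,2,3\}$ and $\rho\in\{\Lambda^1,\ldots,\Lambda^{c-1},\Delta_c\}$.

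First I would apply Property 1: evaluation at $(\tilde\omega^2,1)$ sends $\phi^a\rho$ to $\phi^a(\tilde\omega^2)\rho(1)=(-1)^a I$, which is the identity precisely when $a$ is even. This kills the rows $a=1$ and $a=3$ of the table and leaves $\Lambda^i,\Delta_c,\phi^2\Lambda^i,\phi^2\Delta_c$ as the only candidates. Property 2 must then be verified against the right-translation automorphism $r^{\ast}_{(\tilde\omega^2,1)}$; since $\phi^2(\tilde\omega^2 g) = \phi^2(\tilde\omega^2)\phi^2(g) = \phi^2(g)$, each surviving candidate is fixed, as the rightmost column of the table records. Setting $\theta := \phi^2$, which agrees with the identification $R(H_{m,2s}/Spin(2c+1))=R(\mZ/2)=\mZ[\theta]/\langle\theta^2-1\rangle$, we read off the first presentation: $RH_{m,2s}$ is generated by $\Lambda^1,\ldots,\Lambda^{c-1},\Delta_c,\theta$ subject only to the relations of $RSpin(2c+1)$ together with $\theta^2 = \phi^4 = 1$.

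The second presentation is then a purely formal change of generators: substitute the zero-ranked Pontryagin classes $\bar\Pi_i$ for the exterior powers $\Lambda^i$, introduce $y = \theta - 1$ so that $\theta^2-1$ becomes $y^2 + 2y$, and carry over the $RSpin(2c+1)$ relation $\Delta_c^2 = \sum_{i=0}^{c} 2^{2(c-i)}\Pi_i$ from Theorem \ref{trspin}(1) to its counterpart on $\bar\Delta_c$ and $\bar\Pi_i$.

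The principal delicate point, and the main obstacle I anticipate, is justifying that Properties 1 and 2 have been applied \emph{exhaustively}, i.e.\ that the monomial check on generators suffices to describe the full fixed subring. This reduces to the observation that the fixed subring of $\mZ[\phi]/\langle\phi^4-1\rangle$ under $\phi\mapsto-\phi$ (the action induced by $r^{\ast}_{(\tilde\omega^2,1)}$ on the $R\tilde\Omega$ tensor factor) is freely generated over $\mZ$ by $\{1,\phi^2\}$ modulo $\phi^4=1$, so that tensoring with the polynomial ring $RSpin(2c+1)$ introduces no further syzygies and the subring identification is complete.
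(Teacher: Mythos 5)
Your proposal is correct and follows essentially the same route as the paper, whose proof consists precisely of the table of Property 1/Property 2 checks on the monomial generators $\phi^a\Lambda^i$, $\phi^a\Delta_c$, followed by the identification $\theta=\phi^2$ and the formal change of variables to $\bar{\Pi}_i$ and $y=\theta-1$. Your closing remark on exhaustiveness --- that the fixed subring of $\bigl(\mZ[\phi]/\langle\phi^4-1\rangle\bigr)\otimes RSpin(2c+1)$ under $\phi\mapsto-\phi$ is exactly $\mZ\{1,\phi^2\}\otimes RSpin(2c+1)$ because the second tensor factor is $\mZ$-free --- makes explicit a point the paper leaves implicit, but it does not alter the approach.
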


\subsection{The Restriction Map}\label{srest_map}
To calculate \(Tor^{\ast}_{RSpin(m)}(RH_{m,2s}; \mZ)\), it is necessary to  determine the \(RSpin(m)\)-module structure of \(RH_{m,2s}\) through the restriction homomorphism \(Res: RSpin(m) \to RH_{m,2s}\). Leveraging the commutative diagram \ref{dcommutative}, we deduce the following result.

\begin{proposition}\label{restriction map}
For \(m = 2n+1\) and \(s\equiv 0 \pmod4\). \(Res: RSpin(m) = \mathbb{Z}[\Pi_1,\ldots, \Pi_{n-1}, \delta_n] \to RH_{m,2s} = \mZ[\bar{\Pi}_1, \cdots, \bar{\Pi}_{c-1}, \bar{\delta}_c, y]/(y^2+2y)\) is determined in terms of its generators by
\begin{align}
Res(\Pi_i) &= \sum_{j=0}^{c} \binom{s/2}{i-j}(-1)^{i-j-1}2^{2(i-j)-1}y\bar{\Pi}_j,\label{e4.2.1-1}\\
Res(\delta_n) &= 2^{s-1}(y+2)\delta_c + 2^{n-1}y.\label{e4.2.1-2}
\end{align}
\end{proposition}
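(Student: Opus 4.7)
The plan is to exploit the commutative diagram (\ref{dcommutative}) together with Lemma \ref{limageofz_i}. Since $\mu^\#\colon RH_{m,2s}\hookrightarrow R\tilde\Omega\otimes RSpin(m-2s)$ is injective (by the characterization of its image carried out in Section~\ref{s3.3}), it suffices to compute the composite $\mu^\#\circ\Res$ on each generator $\Pi_i$ and $\delta_n$, and then to further pull back along $\tau$ to the standard torus $T^n$, where Lemma \ref{limageofz_i} gives an explicit formula for the images of the characters $z_i$.

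For $\Res(\Pi_i)$ I will use the generating function $\Pi[t]=\prod_{i=1}^{n}\bigl(1+t(z_i+z_i^{-1})^2\bigr)$. By Lemma \ref{limageofz_i}, each factor maps, under $\mu^\#$, to $1+4t$ when $i\leq s$ is odd, to $1+t(\phi+\phi^{-1})^2$ when $i\leq s$ is even, and to the corresponding factor of $\bar\Pi[t]$ when $i>s$. Since $\phi^2=\theta$ and $\theta^2=1$, one computes $(\phi+\phi^{-1})^2=2\theta+2=2(y+2)$, so that
\[
\mu^\#(\Pi[t])=(1+4t)^{s/2}\bigl(1+2t(y+2)\bigr)^{s/2}\bar\Pi[t].
\]
Using the Lemma \ref{simplifications} identity $(y+2)^k=2^{k-1}(y+2)$ for $k\geq 1$, the prefactor collapses to a closed form; the relation $y(y+2)=0$ then allows one to eliminate the $(y+2)$-components against the $\bar\Pi_j$ and keep only $y$-coefficients. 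Extracting the coefficient of $t^i$ from the Cauchy product with $\bar\Pi[t]$ yields the stated formula for $\Res(\Pi_i)$.

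For $\Res(\delta_n)=\mu^\#(\Delta_n)-2^n$, I expand $\Delta_n=\prod_{i=1}^{n}(z_i+z_i^{-1})$ under $\mu^\#$. The $s/2$ odd factors each contribute $2$; the $s/2$ even factors each contribute $\pm(\phi+\phi^{-1})$; and the remaining $c$ factors produce $\bar\Delta_c=\delta_c+2^c$. Since $s\equiv 0\pmod 4$, $s/2$ is even, so iterating $(\phi+\phi^{-1})^2=2(y+2)$ together with Lemma \ref{simplifications} gives $(\phi+\phi^{-1})^{s/2}=\bigl(2(y+2)\bigr)^{s/4}=2^{s/4}(y+2)^{s/4}=2^{s/2-1}(y+2)$. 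Combining and using $s+c=n$, one obtains $\mu^\#(\Delta_n)=2^{s-1}(y+2)(\delta_c+2^c)=2^{s-1}(y+2)\delta_c+2^{n-1}(y+2)$, and subtracting $2^n$ yields $\Res(\delta_n)=2^{s-1}(y+2)\delta_c+2^{n-1}y$.

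The main technical obstacle is the binomial bookkeeping in extracting the coefficient of $t^i$ from $(1+4t)^{s/2}(1+2t(y+2))^{s/2}$ and organizing the resulting mixed sums involving $\binom{s}{k}$ and $\binom{s/2}{k}$ into the compact form of the stated formula; this collapse relies crucially on $y(y+2)=0$ and on the congruence $s\equiv 0\pmod 4$, which ensures that $s/2$ is itself even so that the iterated powers of $(\phi+\phi^{-1})$ and $(y+2)$ behave as expected.
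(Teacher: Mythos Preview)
Your computation of $\Res(\delta_n)$ is correct and matches the paper's argument essentially line for line.

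For $\Res(\Pi_i)$, however, there is a genuine error. You take the generating function to be $\Pi[t]=\prod_{i=1}^n\bigl(1+t(z_i+z_i^{-1})^2\bigr)$, but the $\Pi_k$ are \emph{defined} as the elementary symmetric functions in the variables $z_i^2+z_i^{-2}-2=(z_i-z_i^{-1})^2$ (this is also what forces them to be augmentation-zero). The displayed formula with a $+$ sign is a typo; the paper's own proof uses $(z_i-z_i^{-1})^2$, and this sign is decisive. With the correct sign, for $i\leq s$ odd one has $z_i\mapsto 1$ and the factor becomes $1+t\cdot 0=1$, while for $i\leq s$ even one gets $1+t(\phi^3-\phi)^2=1+2ty$, so that
\[
\mu^\#(\Pi[t])=(1+2ty)^{s/2}\,\bar\Pi[t],
\]
from which the stated formula drops out immediately after using $y^j=(-2)^{j-1}y$.

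With your $+$ sign the odd factors contribute $(1+4t)^{s/2}$, and the even factors give $(1+2t(y+2))^{s/2}$. The first of these injects genuine constant (i.e.\ $y$-free) terms into every coefficient: already for the $\bar\Pi_0$-coefficient of $t$ one finds $2s(y+2)-sy=sy+4s$, not the required $sy$. No amount of invoking $y(y+2)=0$ removes these constants, because they sit in the $y$-free summand where that relation says nothing. So the ``collapse'' you allude to does not occur, and the formula \eqref{e4.2.1-1} cannot be reached along this route. Replace $(z_i+z_i^{-1})^2$ by $(z_i-z_i^{-1})^2$ throughout and the argument goes through cleanly.
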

\begin{proof}
Let us define \(\Pi^{'}[t] = \mu^{\#}\Pi[t].\)
\begin{align*}
   \Pi^{'}[t] &= \mu^{\#}  \left(\prod_{i=1}^n\left(1+t(z_i-z_i^{-1})^2\right) \right)\\
    &= \mu^{\#}  \left(\prod_{i=1 \atop i \equiv 0 (2)}^s\left(1+t(z_i-z_i^{-1})^2\right)\prod_{i=1 \atop i \equiv 1 (2)}^s\left(1+t(z_i-z_i^{-1})^2\right)\prod_{i=s+1}^n\left(1+t(z_i-z_i^{-1})^2\right) \right)\\
    &=\left(1+t(\phi^3 -\phi)^2\right)^{s/2}\left(\bar{\Pi}[t]\right) \quad \quad \quad \quad (\text{From Lemma}~ \ref{limageofz_i}).       
 \end{align*}
Now, we see that $(\phi^3- \phi)^2= \theta(\theta- 1)^2= \theta(2-2\theta)= 2y.$ Hence, we have
\[
\Pi^{'}[t]= \left(1+2ty\right)^{s/2}\left(\bar{\Pi}[t]\right).
\]
 
 \noindent Equating the coefficients of \(t^i\) in
    \begin{equation}\label{Resfunction}
     \Pi^{'}[t] =\left(1+2ty\right)^{s/2}\left(\bar{\Pi}[t]\right)
    \end{equation} we get,
\begin{align*}
Res(\Pi_i)= \Pi^{'}_i &= \sum_{J=0}^i \binom{s/2}{J}(2y)^J \bar\Pi_{i-J}   \\
&= \sum_{j=0}^c \binom{s/2}{i-j}(2y)^{i-j}\bar\Pi_{j} \quad \quad \quad (\text{Since} \,  \bar\Pi_i = 0 \,  \text{for}\, i > c)\\
&= \sum_{j=0}^{c} \binom{s/2}{i-j}(-1)^{i-j-1}2^{2(i-j)-1}y\bar{\Pi}_j. \quad\quad \quad (\text{From Lemma \ref{simplifications}-}(ii))
\end{align*}
While calculating $Res(\delta_n),$ by Theorem \ref{trspin}(1), we recall that $\delta_n= \Delta_n- 2^n.$ Now, 
\begin{align*}
 Res(\Delta_n)= \mu^{\#}(\Delta_n) &= \mu^{\#}  \left(\prod_{i=1}^n(z_i+z_i^{-1}) \right)\\
   &= j^{\#}  \left(\prod_{i=1 \atop i \equiv 0 (2)}^s\left(z_i+z_i^{-1}\right)\prod_{i=1 \atop i \equiv 1 (2)}^s\left(z_i+z_i^{-1}\right)\prod_{i=s+1}^n\left(z_i+z_i^{-1}\right) \right)\\
   &= 2^{s/2}(\phi +\phi^3)^{s/2}\Delta_c
\end{align*}
We know that $s$ is even and hence $(\phi +\phi^3)^{s/2}= (\phi +\phi^3)^{2(s/4)}= \theta^{s/4}(\theta+ 1)^{s/2}.$ This gives us
\begin{align*}
\mu^{\#}(\Delta_n)&=    \begin{cases}
   ~~ 2^{s/2}\theta\left(\theta+1\right)^{s/2}\Delta_c  \quad \quad ~~~~~\text{for} ~ s/4 \equiv 1 \pmod 2\\
   ~~ 2^{s/2}\left(\theta+1\right)^{s/2}\Delta_c \quad \quad ~~~~\text{for} ~ s/4 \equiv 0 \pmod 2
\end{cases}\\
&= 2^{s-1}(\theta+1)\Delta_c.  \quad \quad \quad (\text{From Lemma } \ref{simplifications}-(3))
\end{align*}
Hence, we get 
\[
Res(\delta_n)= 2^{s-1}(y+2)(\delta_c+ 2^c)- 2^n= 2^{s-1}y\delta_c+ 2^{n-1}y+ 2^s\delta_c+ 2^n- 2^n.
\]
Simplification gives us desired expression for \(Res(\delta_n).\)
\end{proof}

\subsection{$Tor^{\ast}_{RSpin(m)}(RH_{m,2s}; \mZ)$}\label{Torcomputation}
We proced to compute \(Tor^{\ast}_{RSpin(m)}(RH_{m,2s}; \mZ)\). Theoretically, the Koszul resolution \cite[Chapter XXI, \S4] {L02} allows us to construct a corresponding chain complex whose homology gives our desired invariant. However, in practice,  it is almost impossible to compute the homology of the resulting complex. Nevertheless, by establishing relations among the generators of the rings \(RSpin(m)\) and \(RH_{m,2s}\), we invoke the Change of Rings Theorem \cite[Chapter XVI, Theorem 6.1]{ce}, \cite[Lemma 4.5]{g}  to obtain a simplified expression for \(Tor^{\ast}_{RSpin(m)}(RH_{m,2s}; \mZ),\) thereby reducing the complexity of the calculations involved. 

It is imperative to state the few properties of the Koszul resolution \cite[Section 6.3]{bh} which we use  to obtain the computable equivalent complex \\

Let \(\epsilon_{RG}: RG \to R\{1\} = \mZ\) be the augmentation map that maps every (virtual) representation to its (virtual) dimension. Similarly, we define the augmentation map \(\epsilon_{RH}: RH \to \mZ\).
Let \(RG= \mZ[\gamma_1, \ldots , \gamma_n]\) be such that \(\epsilon_{RG}(\gamma_i) = 0\), for \(1 \leq i \leq n\) and let \(RH\) be generated by \(h_1, \ldots, h_m\) with \(\epsilon_{RH}(h_i) = 0 \), for \(1 \leq i \leq m\).
\begin{enumerate}[label=Koz. \arabic*.]

\item   If \(RH\) is a free \(RG\)-module, the above chain complex becomes an exact sequence and
\[
Tor^{RG}_k(RH, \mZ) =  \begin{cases}
RH \otimes_{RG} \mZ   ~~~~~\text{for} ~k = 0 \\
0  \quad \quad ~~~~~\text{for} ~k \neq 0.
\end{cases}
\] 
\item If  for all \( i\), \(Res(\gamma_i) = 0\), that is, if \(RH\) is a trivial \(RG\)-module, then \[ Tor_{RG}^{\ast}(RH, \mZ) \cong \Lambda_{\mZ}^{\ast}[u_1, \ldots, u_n].\]
\item Let us define a homomorphism \(\overline{Res}: \overline{RG} = \mZ[\gamma_1, \cdots , \gamma_m] \to RH\) by \(\overline{Res}(\gamma_i) = h_i\) for \( 1 \leq i \leq m\),  so that \(RH\) becomes a \(\overline{RG}\)-module. If \(RH\) is a free (or more generally a flat) \(\overline{RG}\)-module, it follows that \[ Tor^{\ast}_{RG}(RH, \mZ) = Tor^{\ast}_A(B, \mZ),\] where \(A =RG/\langle\gamma_1, \ldots, \gamma_m\rangle\) and \(B = RH/\langle h_1, \ldots h_m\rangle.\) Further, if \(\tau_1, \ldots, \tau_s \in A\) and \(\overline{Res}(\tau_i) = 0\) for \(1 \leq i \leq s\), then setting \(A_1 = A/(\tau_1, \cdots, \tau_s)\), \(B\) is an \(A_{1}\)-module and
\[ Tor^{\ast}_A(B, \mZ) \cong \Lambda^{\ast}_{\mZ}[t_1, \cdots, t_s] \otimes Tor^{\ast}_{A_1}(B, \mZ).\] 
\end{enumerate}

Now, we analyse the structure of \(RH_{m,2s}.\)
\begin{lemma}\label{lrelationspibarandpi}
Let $\Gamma$ be the subring of $RH_{m,2s}$ generated by $\Pi^{'}_1, \ldots, \Pi^{'}_c.$ Then $RH_{m,2s}$ is a free $\Gamma$ module with generators $y, \delta_c.$ 
\end{lemma}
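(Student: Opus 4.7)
The plan is to build up freeness of $RH_{m,2s}$ via a pair of monic quadratic extensions, after first identifying $\Gamma$ with an \emph{untwisted} polynomial subring $\Gamma'\subset A := \mZ[\bar\Pi_1,\ldots,\bar\Pi_{c-1},\bar\delta_c]$ obtained by reducing modulo $y$.

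First I would observe that expanding $\Pi'[t]=(1+2ty)^{s/2}\bar\Pi[t]$ and applying $y^2=-2y$ yields $\Pi'_i = \bar\Pi_i + y\cdot g_i$ with $g_i \in \mZ[\bar\Pi_0,\ldots,\bar\Pi_{i-1}]$. Hence the canonical projection $\pi\colon RH_{m,2s}\to RH_{m,2s}/(y)=A$ sends $\Pi'_i\mapsto\bar\Pi_i$, and restricts to a surjection onto $\Gamma':=\mZ[\bar\Pi_1,\ldots,\bar\Pi_c]$, where $\bar\Pi_c := \bar\delta_c^2+2^{c+1}\bar\delta_c-\sum_{i=1}^{c-1}2^{2(c-i)}\bar\Pi_i$ via the relation of Theorem \ref{tRHmodds0mod4}. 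Checking algebraic independence of $\{\bar\Pi_1,\ldots,\bar\Pi_c\}$ in $A$ (the only non-trivial case is $\bar\Pi_c$, in which $\bar\delta_c^2$ occurs with coefficient $1$) upgrades $\pi|_\Gamma$ to an isomorphism $\Gamma\xrightarrow{\cong}\Gamma'$, so $\Gamma$ is itself a polynomial ring on $\Pi'_1,\ldots,\Pi'_c$.

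Next I would exhibit $RH_{m,2s}$ as a free $\Gamma'$-module of rank $4$ with basis $\{1,y,\bar\delta_c,y\bar\delta_c\}$, via a tower of monic quadratic extensions: the identity $\bar\delta_c^2+2^{c+1}\bar\delta_c-(\bar\Pi_c+\sum_{i=1}^{c-1}2^{2(c-i)}\bar\Pi_i)=0$ makes $A$ free of rank $2$ over $\Gamma'$ on $\{1,\bar\delta_c\}$, and $y^2+2y=0$ makes $RH_{m,2s}\cong A[y]/(y^2+2y)$ free of rank $2$ over $A$ on $\{1,y\}$. Finally I would transfer this freeness from $\Gamma'$ to $\Gamma$. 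The isomorphism $\pi|_\Gamma$ identifies $\Gamma$- and $\Gamma'$-module structures abstractly, but the two actions on $RH_{m,2s}$ differ by a $y$-correction (if $\tilde p\in\Gamma$ lifts $p\in\Gamma'$, then $\tilde p = p+yu$ for some $u\in\Gamma'$). Writing $r\in RH_{m,2s}$ as $\sum_{i=1}^{4}p_i b_i$ in the $\Gamma'$-basis and naively lifting to $\sum\tilde p_i b_i$, the discrepancy lies in $y\Gamma'\cdot\{1,\bar\delta_c\}$ and can be absorbed by a triangular correction in the $y$-adic filtration, exploiting $(y\,RH_{m,2s})^2\subseteq y\,RH_{m,2s}$ (since $y^2=-2y$). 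Linear independence of $\{1,y,\bar\delta_c,y\bar\delta_c\}$ over $\Gamma$ follows by reducing any relation modulo $y$ to the already-established $\Gamma'$-setting.

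The main obstacle is the final lifting/correction step: although the $\Gamma'$-basis is handed to us by the tower construction, the $y$-twist between $\Gamma$ and $\Gamma'$ means that the same four elements forming a basis over $\Gamma$ is not automatic. The correction scheme succeeds because of the triangular structure imposed by the $y$-adic filtration, but making this rigorous requires an induction that carefully tracks the $y$-contributions introduced by each lift $\tilde p_i$.
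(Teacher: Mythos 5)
Your argument is correct in substance but takes a more hands-on route than the paper. The paper also starts from the unitriangular relation $\bar\Pi_i=\Pi^{'}_i+\sum_{j\ge 1}(-1)^j\binom{s/2+j-1}{j}(2y)^j\Pi^{'}_{i-j}$, but then disposes of everything at once by defining a ring automorphism $\phi$ of $RH_{m,2s}$ fixing $y$ and $\delta_c$ and sending $\bar\Pi_i\mapsto\Pi^{'}_i$, which transports the (evident) freeness of $RH_{m,2s}$ over $\mZ[\bar\Pi_1,\dots,\bar\Pi_c]$ with basis $\{1,y,\delta_c,y\delta_c\}$ to freeness over $\Gamma$. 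Your decomposition --- identify $\Gamma$ with the untwisted polynomial ring $\Gamma'$ by reducing mod $y$, build the rank-$4$ basis over $\Gamma'$ from two monic quadratic extensions, then lift back --- makes explicit two facts the paper leaves implicit (that $\Gamma$ really is a polynomial ring on $\Pi^{'}_1,\dots,\Pi^{'}_c$, and where the basis $\{1,y,\delta_c,y\delta_c\}$ comes from), at the cost of a final transfer step that the automorphism approach avoids.

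One caveat on that transfer step. The discrepancy between a naive lift $\sum\tilde p_i b_i$ and the target element does lie in $y\Gamma'+y\Gamma'\bar\delta_c$, as you say, but the induction that kills it cannot run on the $y$-adic filtration: since $y^2=-2y$, a further multiplication by $y$ only introduces a factor of $2$, so $y^kRH_{m,2s}=2^{k-1}\,yRH_{m,2s}$ never shrinks and a correction loop organized by powers of $y$ does not terminate. What actually makes the correction triangular is the grading you already wrote down at the start: $\tilde p=p+y\,u(p)$ with $u(p)\in\Gamma'$ involving only $\bar\Pi_j$ of strictly smaller weight than $p$ (because $g_i\in\mZ[\bar\Pi_0,\dots,\bar\Pi_{i-1}]$), so the operator $p\mapsto p-2u(p)$ that governs how $\tilde p$ acts on $yRH_{m,2s}$ is the identity minus a locally nilpotent operator for the weight filtration on $\Gamma'$, hence bijective. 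With the induction re-indexed by that weight rather than by powers of $y$, both surjectivity and the linear-independence check (reduce mod $y$ to the $\Gamma'$-setting, then use injectivity of $p\mapsto p-2u(p)$ on the $y$-multiples) go through, and your proof is complete.
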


\begin{proof}
From \ref{Resfunction}, we have
\begin{align*}
\bar{\Pi}[t] &= (1+2ty)^{-s/2}\Pi^{'}[t]\\
\sum_{j\geq 0}\bar\Pi_j t^j &= \left[\sum_{j\geq 0}\binom{s/2+j-1}{j}(-1)^j(2ty)^j\right]\left[\sum_{j\geq 0}\Pi_j^{'}t^j\right]\\
&= \sum_{i\geq 0}\sum_{j =0}^{i} (-1)^j\binom{s/2 + j - 1}{j}(2y)^j\Pi^{'}_{i-j}t^i.
\end{align*}

\noindent By equating the coefficients of \(t^i\) we have,
\begin{equation}\label{eq_relation_bar-Pi}
\bar{\Pi}_i = \sum_{j =0}^{i} (-1)^j\binom{s/2 + j - 1}{j}(2y)^j\Pi^{'}_{i-j}
\end{equation}
In particular, the matrix expressing the elements \(\bar{\Pi}_i\) in terms of \(\Pi^{'}_i\) is triangular with diagonal entries as 1.

To conclude the proof, define $\phi: RH_{m,2s} \to RH_{m,2s}$ by 
\[
\phi(y)= y, ~~\phi(\delta_c)= \delta_c, ~~\phi(\bar{\Pi}_i)= \Pi^\prime_i ~~\text{for}~ 1 \leq i \leq c.
\]
The map $\phi$ is an isomorphism and it intertwines $\Gamma$-module structure on $RH_{m,2s}$ and $\mZ[\Pi_1^\prime, \ldots, \Pi_c^\prime]$-module structure on $RH_{m,2s}.$ 

This concludes the proof.
\end{proof}

\noindent By applying the property \textbf{Koz. 3} discussed earlier in this section, we have

\begin{proposition}\label{p4.3.2}
\[Tor^{\ast}_{RSpin(m)}(RH_{m,2s}; \mZ) \approx Tor^{\ast}_A(B;\mZ),\]
where
\[
A= RSpin(m)/\langle \Pi_1, \cdots, \Pi_c\rangle ~~~\text{and}~~~  B = RH/\langle \Pi_1^{'}, \cdots, \Pi^{'}_c\rangle.
\]
\end{proposition}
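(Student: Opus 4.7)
The plan is to invoke the Koszul reduction principle Koz. 3 with the polynomial subring $\overline{RG} = \mZ[\Pi_1, \ldots, \Pi_c] \subset RSpin(m)$ and $\overline{\Res}$ taken as the restriction of $\Res$, so that $\overline{\Res}(\Pi_i) = \Pi_i^{'}$ as determined in Proposition~\ref{restriction map}. By construction $A = RSpin(m)/\langle\Pi_1, \ldots, \Pi_c\rangle = \mZ[\Pi_{c+1}, \ldots, \Pi_{n-1}, \delta_n]$ is the polynomial complement, and $B = RH_{m,2s}/\langle\Pi_1^{'}, \ldots, \Pi_c^{'}\rangle$ is precisely $RH_{m,2s} \otimes_{\overline{RG}} \mZ$. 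Matching these with the output of Koz. 3 leaves a single hypothesis to verify: the flatness of $RH_{m,2s}$ as an $\overline{RG}$-module.

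This flatness is supplied almost verbatim by Lemma~\ref{lrelationspibarandpi}, which exhibits $RH_{m,2s}$ as a free module over $\Gamma = \mZ[\Pi_1^{'}, \ldots, \Pi_c^{'}] \subset RH_{m,2s}$. To transport freeness over $\Gamma$ into freeness over $\overline{RG}$, one needs the canonical map $\overline{RG} \to \Gamma$ to be an isomorphism, i.e., that $\Pi_1^{'}, \ldots, \Pi_c^{'}$ are algebraically independent in $RH_{m,2s}$. I would argue this directly from the unitriangular expansion already exploited in the proof of Lemma~\ref{lrelationspibarandpi}: each $\Pi_i^{'}$ equals $\bar\Pi_i$ plus $y$ times a polynomial in $\bar\Pi_1, \ldots, \bar\Pi_{i-1}$, so decomposing along the $\mZ$-direct sum $RH_{m,2s} = R_0 \oplus y\,R_0$ with $R_0 = \mZ[\bar\Pi_1, \ldots, \bar\Pi_{c-1}, \bar\delta_c]$ converts any purported polynomial relation among the $\Pi_i^{'}$ into one among the algebraically independent $\bar\Pi_i$, which cannot exist.

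Once flatness is in place, Koz. 3 immediately yields the desired isomorphism $Tor^{\ast}_{RSpin(m)}(RH_{m,2s}; \mZ) \cong Tor^{\ast}_A(B; \mZ)$. Conceptually, the mechanism behind Koz. 3 is the base-change spectral sequence for Tor along the factorization $\overline{RG} \hookrightarrow RSpin(m)$ and $\overline{RG} \twoheadrightarrow \mZ$, which collapses onto a single row precisely when the higher $Tor^{\overline{RG}}_q(RH_{m,2s}, \mZ)$ vanish. I do not anticipate any genuine obstacle beyond this bookkeeping, since the structurally nontrivial input—freeness over $\Gamma$, together with the unitriangular relationship between $\bar\Pi_i$ and $\Pi_i^{'}$—has already been extracted in Lemma~\ref{lrelationspibarandpi}; the present proposition is, in effect, the statement that this freeness is exactly the hypothesis Koz. 3 demands.
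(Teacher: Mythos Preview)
Your proposal is correct and follows essentially the same route as the paper: both invoke Koz.~3 with $\overline{RG}=\mZ[\Pi_1,\ldots,\Pi_c]$, relying on Lemma~\ref{lrelationspibarandpi} for the required freeness of $RH_{m,2s}$. Your explicit check that $\Pi_1',\ldots,\Pi_c'$ are algebraically independent (so that $\overline{RG}\to\Gamma$ is an isomorphism) makes precise what the paper handles via the ring automorphism $\phi$ of $RH_{m,2s}$ sending $\bar\Pi_i\mapsto\Pi_i'$ in the proof of Lemma~\ref{lrelationspibarandpi}; the content is the same.
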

An immediate implication of Proposition \ref{p4.3.2} is  the relations in \(B\) exhibited in Lemma \ref{generatorsofB} and Corollary \ref{crelationsinB}.
\begin{lemma} \label{generatorsofB} In B, 
\begin{align*}
\bar{\Pi}_i &= -\binom{(s/2)+i-1}{i}2^{2i-1}y, \quad \quad \text{for} \,\, \, \, \,  1 \leq i \leq c\,;\\
\Pi^{'}_i &= \binom{(s/2) + i -1}{i}2^{2i-1}y - \sum_{j=c+1}^{i-1} \binom{(s/2)+i-j-1}{i-j}2^{2(i-j)}\Pi_j^{'}, \quad \quad \text{for} \,\, \, \, \, c+1 \leq i \leq n.
\end{align*}
 \end{lemma}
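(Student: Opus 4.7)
The plan is to extract both formulas directly from the generating-function identity (\ref{eq_relation_bar-Pi}) already derived in Lemma~\ref{lrelationspibarandpi}, namely
$$\bar{\Pi}_i = \sum_{j=0}^{i}(-1)^j \binom{s/2+j-1}{j}(2y)^j \Pi^{'}_{i-j},$$
together with two inputs: (a) in $B$ one has $\Pi^{'}_k = 0$ for $1 \leq k \leq c$ by construction, and (b) inside $RSpin(2c+1) \subseteq RH_{m,2s}$ the generating function $\bar{\Pi}[t] = \prod_{i=1}^c (1+t(z_i-z_i^{-1})^2)$ is a polynomial in $t$ of degree $c$, so $\bar{\Pi}_i = 0$ for $i > c$. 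Beyond these, the only extra tool I will need is Lemma~\ref{simplifications}(2), which gives $(2y)^j = (-1)^{j-1} 2^{2j-1} y$ for $j \geq 1$.

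For the first formula ($1 \leq i \leq c$), input (a) collapses the sum in (\ref{eq_relation_bar-Pi}) to the single term $j = i$, so $\bar{\Pi}_i = (-1)^i \binom{s/2+i-1}{i}(2y)^i$; substituting the expression for $(2y)^i$ and collecting the overall sign $(-1)^{2i-1} = -1$ produces the stated formula at once.

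For the second formula ($c+1 \leq i \leq n$), input (b) forces the left-hand side of (\ref{eq_relation_bar-Pi}) to vanish, and isolating the $j=0$ summand yields $\Pi^{'}_i = \sum_{j=1}^{i} (-1)^{j+1} \binom{s/2+j-1}{j}(2y)^j \Pi^{'}_{i-j}$. By (a) only the terms with $i-j = 0$ or $c+1 \leq i-j \leq i-1$ survive; the $j = i$ term, after the same simplification of $(2y)^i$, contributes exactly $\binom{s/2+i-1}{i} 2^{2i-1} y$, while for each $c+1 \leq k \leq i-1$ (with $j = i-k$) the contribution reduces to $\binom{s/2+i-k-1}{i-k} 2^{2(i-k)-1}\, y\, \Pi^{'}_k$.

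The one delicate point, which I regard as the main step of the argument, is converting each factor $y\Pi^{'}_k$ into $-2\Pi^{'}_k$; this turns $2^{2(i-k)-1} y$ into $-2^{2(i-k)}$ and yields precisely the formula in the statement. I plan to justify this identity by strong induction on $i$: the base case $i = c+1$ has empty sum, giving $\Pi^{'}_{c+1} = \binom{s/2+c}{c+1} 2^{2c+1} y$, a $\mZ$-multiple of $y$; inductively, every $\Pi^{'}_k$ with $c+1 \leq k < i$ is a $\mZ$-multiple of $y$, and the formula just derived expresses $\Pi^{'}_i$ as a $\mZ$-linear combination of $y$ and terms $y\Pi^{'}_k$, all of which are again $\mZ$-multiples of $y$ via the relation $y^2 = -2y$ of Lemma~\ref{simplifications}(2). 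Writing $\Pi^{'}_k = \alpha_k y$, one then has $y\Pi^{'}_k = \alpha_k y^2 = -2\alpha_k y = -2\Pi^{'}_k$, closing the loop and completing the proof.
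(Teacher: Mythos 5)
Your proof is correct and follows essentially the same route as the paper: specialize the identity (\ref{eq_relation_bar-Pi}) in $B$ using $\Pi^{'}_k=0$ for $1\leq k\leq c$ and $\bar{\Pi}_i=0$ for $i>c$, then simplify the powers of $2y$ via Lemma \ref{simplifications}(2). The only genuine difference is that you explicitly justify the substitution $y\Pi^{'}_k=-2\Pi^{'}_k$ by an induction showing each $\Pi^{'}_k$ with $k\geq c+1$ is an integer multiple of $y$ (so that $y^2=-2y$ applies), a step the paper's proof asserts without argument; this is a welcome clarification rather than a departure in method.
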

 \begin{proof}
 In B, the only \(\Pi_i^{'}\)s that survive in Equation \ref{eq_relation_bar-Pi} are  \(\Pi_i^{'}\) for \(i = 0 , c+1, \cdots , n\).\\So for \(1 \leq i \leq c\), we have \(\bar{\Pi}\) as mentioned.\\ 
 For \(c+1 \leq i \leq n\), \(\bar{\Pi}_i = 0\). Hence, we have
\begin{multline*}
0 = \binom{(s/2)+i-1}{i}(-2y)^i+\sum_{J =0}^{i-(c+1)} (-1)^j\binom{(s/2) + J - 1}{J}(2y)^J\Pi^{'}_{i-J}\\
+ \sum_{J = i-c}^{i-1} (-1)^J\binom{(s/2) + J - 1}{J}(2y)^J\Pi^{'}_{i-J}
\end{multline*}
\[
0= (-1)^{i}\binom{(s/2)+i-1}{i}(2y)^i + \sum_{j =c+1}^{i} (-1)^{i-j}\binom{(s/2) + i-j - 1}{i-j}(2y)^{i-j}\Pi^{'}_{j} .
\]
By Lemma \ref{simplifications}(2)), we have
\[
0 = (-1)^{2i-1}\binom{(s/2)+i-1}{i}2^{2i-1}y + \Pi_i^{'} - \sum_{j =c+1}^{i-1} \binom{(s/2) + i-j - 1}{i-j}2^{2(i-j)-1}y\Pi^{'}_{j}.
\]
So, $\displaystyle{\Pi_i^{'}= \binom{(s/2)+i-1}{i}2^{2i-1}y - \sum_{j =c+1}^{i-1} \binom{(s/2) + i-j - 1}{i-j}2^{2(i-j)}\Pi^{'}_{j}}$ as \(y\Pi_i^{'} = -2\Pi_i^{'}\) 
 \end{proof}
On direct evaluation for \(\bar{\Pi}_i\) in relations obtained in Theorem \ref{tRHmodds0mod4}, we obtain 
 \begin{corollary}\label{crelationsinB}
 In $B,$ in addition to the relation \(y^2 + 2y = 0\), we have
 \[
 \delta_c^2 + 2^{c+1}\delta_c = 2^{2c-1}y \bigg[1-\binom{(s/2)+c}{c}\bigg].
 \]
\end{corollary}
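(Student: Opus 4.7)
The plan is to extract the relation directly from the quadratic identity for $\Delta_c$ recorded in Theorem \ref{tRHmodds0mod4}, using the substitution $\delta_c = \Delta_c - 2^c$ and the explicit expressions for $\bar{\Pi}_i$ available in $B$ from Lemma \ref{generatorsofB}. The relation $y^2 + 2y = 0$ is inherited verbatim from the presentation of $RH_{m,2s}$, so only the second relation requires work.

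First I would rewrite $\Delta_c^2 = \sum_{i=0}^{c} 2^{2(c-i)}\bar{\Pi}_i$ by using $\bar{\Pi}_0 = 1$ and replacing $\Delta_c = \delta_c + 2^c$ on the left-hand side. Expanding the square gives
\[
\delta_c^2 + 2^{c+1}\delta_c + 2^{2c} = 2^{2c} + \sum_{i=1}^{c} 2^{2(c-i)}\bar{\Pi}_i,
\]
so that the $2^{2c}$ terms cancel and I reduce the identity to $\delta_c^2 + 2^{c+1}\delta_c = \sum_{i=1}^{c} 2^{2(c-i)}\bar{\Pi}_i$. Now in $B$, Lemma \ref{generatorsofB} lets me substitute $\bar{\Pi}_i = -\binom{(s/2)+i-1}{i}2^{2i-1}y$, which collapses every summand to a scalar multiple of $y$: each $2^{2(c-i)}\bar{\Pi}_i$ becomes $-\binom{(s/2)+i-1}{i} 2^{2c-1} y$.

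Factoring out $-2^{2c-1}y$ leaves me needing the partial sum $\sum_{i=1}^{c}\binom{(s/2)+i-1}{i}$. This is where I apply the hockey-stick identity $\sum_{i=0}^{c}\binom{(s/2)+i-1}{i} = \binom{(s/2)+c}{c}$; subtracting the $i=0$ term gives $\binom{(s/2)+c}{c} - 1$. Plugging back in yields
\[
\delta_c^2 + 2^{c+1}\delta_c = -2^{2c-1}y\Bigl[\binom{(s/2)+c}{c} - 1\Bigr] = 2^{2c-1}y\Bigl[1 - \binom{(s/2)+c}{c}\Bigr],
\]
which is exactly the claimed relation.

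I do not expect a real obstacle here; the proof is essentially bookkeeping on top of Lemma \ref{generatorsofB}. The only step requiring care is the binomial sum, where the hockey-stick identity must be invoked in its upper-index form, and one needs to confirm that the terms $y\Pi_i'$ for $i > c$ (which do not appear on the right-hand side of the squared identity since the sum stops at $i=c$) play no role. Once that observation is made, the computation is immediate.
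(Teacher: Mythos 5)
Your proposal is correct and is essentially the paper's own argument: the paper's proof of Corollary \ref{crelationsinB} is precisely "direct evaluation" of $\bar{\Pi}_i$ from Lemma \ref{generatorsofB} in the relation $\Delta_c^2 = \sum_{i=0}^{c}2^{2(c-i)}\bar{\Pi}_i$ of Theorem \ref{tRHmodds0mod4}, which you carry out explicitly, including the hockey-stick summation $\sum_{i=0}^{c}\binom{(s/2)+i-1}{i}=\binom{(s/2)+c}{c}$ that the paper leaves implicit. The bookkeeping (cancellation of $2^{2c}$, the uniform power $2^{2c-1}$ in each summand, and the sign) all checks out.
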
 
  
To conclude,
\begin{proposition}
As a ring, \(B \cong \mZ[\delta_c, y]/I,\) where \(I = \langle y^2+2y, \delta_c^2 + 2^{c+1}\delta_c - 2^{2c-1}y \left[1-\binom{(s/2)+c}{c}\right]\rangle\).
\end{proposition}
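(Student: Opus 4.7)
The plan is to read off the presentation of $B$ directly from that of $RH_{m,2s}$ given in Theorem \ref{tRHmodds0mod4}, using the substitutions already recorded in Lemma \ref{generatorsofB}. Recall
\[ RH_{m,2s} = \mZ[\bar{\Pi}_1, \ldots, \bar{\Pi}_{c-1}, \delta_c, y]/\bigl\langle y^2+2y,\ \Delta_c^2 - \textstyle\sum_{i=0}^{c} 2^{2(c-i)}\bar{\Pi}_i\bigr\rangle, \]
where $\bar{\Pi}_c$ is shorthand for $\Delta_c^2 - \sum_{i=0}^{c-1}2^{2(c-i)}\bar{\Pi}_i$ and $\Delta_c = \delta_c + 2^c$; the ring $B$ is obtained from $RH_{m,2s}$ by further imposing $\Pi'_i = 0$ for $1 \leq i \leq c$.

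First I would apply Lemma \ref{generatorsofB}, which shows that for $1 \leq i \leq c$ the quotient relation $\Pi'_i = 0$ forces $\bar{\Pi}_i = -\binom{(s/2)+i-1}{i}2^{2i-1}y$ in $B$. Since the matrix expressing the $\bar{\Pi}_i$ in terms of the $\Pi'_j$ via \eqref{eq_relation_bar-Pi} is unitriangular, these $c$ substitutions are equivalent to the $c$ defining relations $\Pi'_i = 0$, and no independent relation is hidden among them. I would use this to eliminate the generators $\bar{\Pi}_1, \ldots, \bar{\Pi}_{c-1}$, so that $B$ is generated as a ring by just $\delta_c$ and $y$, with $y^2+2y=0$ the only surviving non-$\delta_c$ relation.

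Next, to extract the relation in $\delta_c$, I would equate the two expressions for $\bar{\Pi}_c$: the explicit $\bar{\Pi}_c = -\binom{(s/2)+c-1}{c}2^{2c-1}y$ from Lemma \ref{generatorsofB}, and the structural formula $\bar{\Pi}_c = \Delta_c^2 - \sum_{i=0}^{c-1}2^{2(c-i)}\bar{\Pi}_i$ inherited from $RH_{m,2s}$. Expanding $\Delta_c^2 = \delta_c^2 + 2^{c+1}\delta_c + 2^{2c}$, substituting the $\bar{\Pi}_i$, and summing the resulting binomial coefficients via the hockey-stick identity $\sum_{i=0}^{c}\binom{(s/2)+i-1}{i} = \binom{(s/2)+c}{c}$ would reproduce precisely the relation of Corollary \ref{crelationsinB}. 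The main point requiring care is the unitriangularity observation above: without it, one would need to worry that imposing the $c$ quotient relations $\Pi'_i=0$ could create further hidden identities among $\delta_c$ and $y$; the triangularity rules this out and pins the full defining ideal of $B$ to the two-generator ideal $I$ as claimed.
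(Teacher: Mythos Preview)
Your proposal is correct and follows essentially the same approach as the paper: the paper states this proposition as a direct conclusion (``To conclude'') of Lemma \ref{generatorsofB} and Corollary \ref{crelationsinB} without further argument, and your write-up simply fills in those details, including the unitriangularity from Lemma \ref{lrelationspibarandpi} that justifies the clean elimination of the $\bar{\Pi}_i$. The hockey-stick computation you outline is exactly the ``direct evaluation'' the paper alludes to in the proof of Corollary \ref{crelationsinB}.
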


\noindent Now, we explore the ring $A$. Here, we reassign most of the generators of \(A\) with the ones in the kernel of \(\Res:RSpin(m) \to RH_{m,2s}\). Because this strategic reassignment ensures that the \(Tor^\ast\) of the associated Koszul resolution is equivalent to the Koszul resolution corresponding to \(Tor^{\ast}_{RG}(RH; \mZ)\) and is computable. Thus, the determination of $K^{\ast}(FV_{m,2s})$ becomes practically feasible.

\begin{lemma}\label{l4.3.6}
There exist $P_{c+1}, \ldots, P_{n-1}$ in $A$ such that there is an isomorphism between $A= \mZ[\Pi_{c+1}, \ldots, \Pi_{n-1}, \delta_n]$ and $\mZ[P_{c+1}, \ldots, P_{n-1}, \delta_n]$ and 
\[
\Res(P_i)= \binom{(s/2) + i -1}{i}2^{2i-1}y.
\]
\end{lemma}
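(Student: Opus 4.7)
The plan is to produce the $P_i$ by inverting the triangular relation among the $\Pi_i^{'}$ established in Lemma \ref{generatorsofB}. Rewriting that relation as
\[
\Pi_i^{'} + \sum_{j=c+1}^{i-1} \binom{(s/2)+i-j-1}{i-j}\, 2^{2(i-j)}\, \Pi_j^{'} \;=\; \binom{(s/2)+i-1}{i}\, 2^{2i-1}\, y
\]
suggests lifting the left-hand combination from $B$ to $A$. Accordingly, I would set $P_{c+1} := \Pi_{c+1}$, and, for $c+2 \leq i \leq n-1$,
\[
P_i := \Pi_i + \sum_{j=c+1}^{i-1} \binom{(s/2)+i-j-1}{i-j}\, 2^{2(i-j)}\, \Pi_j.
\]

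With this definition, computing $\Res(P_i)$ is one step: since $\Res$ is a ring homomorphism sending each $\Pi_k$ to $\Pi_k^{'}$, the image $\Res(P_i)$ is precisely the left-hand side of the displayed equation above, and Lemma \ref{generatorsofB} collapses it to $\binom{(s/2)+i-1}{i}\, 2^{2i-1}\, y$. No induction is required, because the cancellation happens uniformly in a single application of the lemma.

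For the ring isomorphism $\mZ[\Pi_{c+1}, \ldots, \Pi_{n-1}, \delta_n] \cong \mZ[P_{c+1}, \ldots, P_{n-1}, \delta_n]$, I would observe that the change of generators $\Pi_i \mapsto P_i$ is unimodular: each $P_i$ equals $\Pi_i$ plus an integer linear combination of $\Pi_{c+1}, \ldots, \Pi_{i-1}$, so the associated transition matrix is lower triangular with $1$'s along the diagonal and hence invertible over $\mZ$. A recursive back-substitution then expresses each $\Pi_i$ as an integer polynomial in $P_{c+1}, \ldots, P_i$. Since $\delta_n$ is untouched by the substitution and remains algebraically independent of the $\Pi$'s in $A$, the two subrings of $A$ coincide, yielding the stated isomorphism.

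There is no real obstacle in this argument; the substantive content is packaged in Lemma \ref{generatorsofB}, and the remaining work is the formal observation that a lower-triangular unit-diagonal change of variables is an automorphism of a polynomial ring. The only bookkeeping point worth a second look is verifying that the coefficient attached to $\Pi_j$ in the definition of $P_i$ is exactly the coefficient of $\Pi_j^{'}$ that appears in the formula of Lemma \ref{generatorsofB}, which by construction it is.
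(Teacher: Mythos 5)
Your proposal is correct and follows essentially the same route as the paper: define $P_i$ as $\Pi_i$ plus the triangular integer combination of the lower-indexed generators, evaluate $\Res(P_i)$ by a single application of Lemma \ref{generatorsofB}, and conclude the isomorphism from the fact that the transition matrix is triangular with unit diagonal. If anything, your choice to write $P_i$ in terms of the $\Pi_j \in A$ rather than their images $\Pi_j'$ is the cleaner reading of what the paper intends.
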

\begin{proof}
Define
\[
P_i:= \Pi^{'}_i + \sum_{j=c+1}^{i-1} \binom{(s/2)+i-j-1}{i-j}2^{2(i-j)}\Pi_j^{'}
\] 
for $i= c+1, c+2, \ldots, n-1$ and consider
\begin{align*}
\Phi  \colon A & \to \mZ[P_{c+1}, \ldots, P_{n-1}, \delta_n]\\
\Pi_i & \mapsto P_i\\
\delta_n &\mapsto \delta_n
\end{align*}
Clearly the matrix of transformation is a triangular matrix with each diagonal entry as $1$ and hence $\Phi$ is an isomorphism. Explicit description of $\Res(P_i)$ is obtained using by Lemma \ref{generatorsofB}.
\end{proof}

The following problem, from the book Algebra by Lang \cite[Page 547, Exercise 25]{L02}, plays a decisive role in getting new generators. We briefly describe its solution.
    
\begin{problem}\label{pGCDMatrix}
Consider $(b_1, b_2, \ldots, b_n)$ where $b_i \in \mathbb{Z}$ and 
\[
GCD(b_1, b_2, \ldots, b_n)= 1
\]
then $(b_1, b_2, \ldots, b_n)$ can be expressed as a first row of an invertible matrix in $GL_n(\mZ).$
\end{problem}
\begin{proof}
We want to show that $\begin{bmatrix}b_1 &  b_2 & \cdots & b_n\end{bmatrix}= e_1M,$ where $e_1= \begin{bmatrix}1 & 0 & 0 & \cdots & 0\end{bmatrix}.$\\
Equivalently, we want to show that 
\[
\begin{bmatrix}b_1 &  b_2 & \cdots & b_n\end{bmatrix}N= e_1,
\]
for some non-singular matrix $N.$ We get desired matrix $N$ by performing row operations on $\begin{bmatrix}b_1 &  b_2 & \ldots & b_n\end{bmatrix}$ by matrices from $GL_n(\mZ).$\\

\noindent We establish the result for $\begin{bmatrix}b_1 &  b_2 & b_3\end{bmatrix}$ and similar argument can be imitated for a longer row.

Let GCD$(b_2, b_3)= \alpha.$ We have $b_2x_2+ b_3x_3= \alpha.$ Hence, $a_2x_2+ a_3x_3= 1,$ where $a_2= b_2/\alpha$ and $a_3= b_3/\alpha.$ Now,
\[
\begin{bmatrix}b_1 & b_2 & b_3 \end{bmatrix} \begin{bmatrix} 1 & 0 & 0\\ 0 & x_2 & a_3\\ 0 & x_3 & a_2 \end{bmatrix}= \begin{bmatrix} b_1 & \alpha & b_2a_3+ b_3a_2\end{bmatrix}.
\]
We know that $\alpha$ divides $b_2a_3+ b_3a_2.$ Let $b_2a_3+ b_3a_2= \alpha y.$ We have 
\[
\begin{bmatrix}b_1 & \alpha & b_2a_3+ b_3a_2 \end{bmatrix} \begin{bmatrix} 1 & 0 & 0\\ 0 & 1 & -y\\ 0 & 0 & 1 \end{bmatrix}= \begin{bmatrix} b_1 & \alpha & 0\end{bmatrix}.
\]
Given that GCD$(b_1, b_2, b_3)= 1.$ That is, GCD$(b_1, \alpha)= 1$ and hence there are integers $z_1, z_2$ so that $b_1z_1+ \alpha z_2= 1.$ Now,
\[
\begin{bmatrix}b_1 & \alpha & 0 \end{bmatrix} \begin{bmatrix} z_1 & -\alpha & 0\\ z_2 & b_1 & 0\\ 0 & 0 & 1 \end{bmatrix}= \begin{bmatrix} 1 & 0 & 0\end{bmatrix}.
\]
Therefore, $\begin{bmatrix}b_1 &  b_2 & b_3\end{bmatrix}N= e_1$ where 
\[
N= \begin{bmatrix} 1 & 0 & 0\\ 0 & x_2 & a_3\\ 0 & x_3 & a_2 \end{bmatrix}
\begin{bmatrix} 1 & 0 & 0\\ 0 & 1 & -y\\ 0 & 0 & 1 \end{bmatrix}
\begin{bmatrix} z_1 & -\alpha & 0\\ z_2 & b_1 & 0\\ 0 & 0 & 1 \end{bmatrix}.
\]
Clearly $N$ is invertible since each of the matrices on the right hand side is invertible.
\end{proof}

Moving forward, we fix a few notations.
\begin{itemize}
\item Let $b_0$= GCD $\displaystyle{\bigg\{\binom{(s/2) + i -1}{i}2^{2i-1}\bigg|i= c+1, c+2, \ldots, n-1\bigg\}}.$
\item Let $\beta_{c+1}, \beta_{c+2}, \ldots, \beta_{n-1} \in \mZ$ such that $\displaystyle{b_0= \sum_{i= c+1}^{n-1}\beta_i \binom{(s/2) + i -1}{i}2^{2i-1}}.$
\end{itemize}

We observe that GCD$(\beta_{c+1}, \beta_{c+2}, \ldots, \beta_{n-1})= 1.$ We use \ref{pGCDMatrix} and have the following Lemma.

\begin{lemma}\label{l4.3.8}
There exist $U_{1}, U_{2}, \ldots, U_{s-1}$ in $A$ such that there is an isomorphism $\Psi: A= \mZ[P_{c+1}, P_{c+2}, \ldots, P_{n-1}, \delta_n] \to \mZ[U_1, U_2, \ldots, U_{s-1}, \delta_n]$ with 
\[
\Res(U_1)= b_0y, \,\,\,\, \Res(U_l)= a_l b_0y
\]
where $l= 2, \ldots, s-1$ and $\alpha_l \in \mZ.$
\end{lemma}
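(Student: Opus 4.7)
The plan is to reduce this to a direct application of Problem \ref{pGCDMatrix}. Since $m = 2n+1$ and $m - 2s = 2c+1$ we have $n - c - 1 = s - 1$, so the generators $P_{c+1}, \ldots, P_{n-1}$ of $A$ (together with $\delta_n$) are exactly $s-1$ in number, matching the stated count of $U_l$'s. The key input is the tuple $(\beta_{c+1}, \beta_{c+2}, \ldots, \beta_{n-1}) \in \mZ^{s-1}$ chosen so that
\[
b_0 = \sum_{i = c+1}^{n-1} \beta_i \binom{(s/2)+i-1}{i} 2^{2i-1},
\]
which by definition of $b_0$ as the gcd forces $\gcd(\beta_{c+1}, \ldots, \beta_{n-1}) = 1$ (as noted just before the lemma).

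First I would invoke Problem \ref{pGCDMatrix} to produce an invertible matrix $M = (m_{li}) \in GL_{s-1}(\mZ)$ whose first row is $(\beta_{c+1}, \beta_{c+2}, \ldots, \beta_{n-1})$. Then I would define
\[
U_l := \sum_{i=c+1}^{n-1} m_{li}\, P_i, \qquad l = 1, 2, \ldots, s-1,
\]
and set $\Psi$ to be the $\mZ$-algebra homomorphism sending $P_i$ to the corresponding expression dictated by $M^{-1}$ (and fixing $\delta_n$). Because $M \in GL_{s-1}(\mZ)$, the linear change of variables is invertible over $\mZ$, so $\Psi$ is an isomorphism of polynomial rings
\[
A = \mZ[P_{c+1}, \ldots, P_{n-1}, \delta_n] \xrightarrow{\cong} \mZ[U_1, \ldots, U_{s-1}, \delta_n].
\]

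Finally, I would verify the images under $\Res$ using Lemma \ref{l4.3.6}. For $l = 1$, the definition of $\beta_i$ gives immediately
\[
\Res(U_1) = \sum_{i=c+1}^{n-1} \beta_i \binom{(s/2)+i-1}{i} 2^{2i-1}\, y = b_0\, y.
\]
For $l \geq 2$, since $b_0$ divides each coefficient $\binom{(s/2)+i-1}{i}2^{2i-1}$, write $\binom{(s/2)+i-1}{i}2^{2i-1} = b_0 \gamma_i$ with $\gamma_i \in \mZ$; then
\[
\Res(U_l) = \sum_{i=c+1}^{n-1} m_{li}\, b_0 \gamma_i\, y = a_l\, b_0\, y, \qquad a_l := \sum_{i=c+1}^{n-1} m_{li}\gamma_i \in \mZ,
\]
which is precisely the stated form.

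There is no serious obstacle; the content of the lemma is essentially the elementary divisor/Bezout fact packaged in Problem \ref{pGCDMatrix}. The only point requiring care is the bookkeeping: one must designate the Bezout combination as the \emph{first} row so that $\Res(U_1)$ comes out exactly equal to $b_0 y$ (rather than a multiple), since this normalization is what makes $U_1$ the ``primitive'' generator used in the subsequent computation of $Tor^{\ast}_A(B;\mZ)$.
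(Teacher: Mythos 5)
Your proposal is correct and follows essentially the same route as the paper: both invoke Problem \ref{pGCDMatrix} to build a matrix in $GL_{s-1}(\mZ)$ with first row $(\beta_{c+1},\ldots,\beta_{n-1})$, define the $U_l$ as the corresponding integral linear combinations of the $P_i$, and conclude that $\Psi$ is an isomorphism because the change-of-basis matrix is invertible over $\mZ$. Your explicit check that $b_0$ divides each coefficient $\binom{(s/2)+i-1}{i}2^{2i-1}$, so that $\Res(U_l)=a_l b_0 y$ for $l\ge 2$, is a welcome clarification of a point the paper's proof states only in passing.
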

\begin{proof}
Using Corollory \ref{pGCDMatrix}, we can find a matrix $E= [e_{i,j}] \in GL_{s-1}(\mZ)$ with $e_{1,j}= \beta_{c+j}$ for $1 \leq j \leq s-1.$ Define
\[
U_l:= \sum_{j=1}^{s-1} e_{l,j}P_{c+j} \,\,\,\,\, l= 1, 2, \ldots, s-1.
\]
The map $\Psi$ is given by
\begin{align*}
\Psi: A & \to \mZ[U_1, U_2, \ldots, U_{s-1}, \delta_n],\\
P_{c+l} &\mapsto U_l,\\
\delta_n & \mapsto \delta_n.
\end{align*}
Then the matrix of transformation of $\Psi$ is $\diag(E, 1)$ which is invertible hence we conclude that $\Psi$ is an isomorphism. Direct calculation shows us that 
\begin{align*}
\Res(U_1) &= \sum_{j=1}^{s-1} e_{1,j}\Res(P_{c+j})= \sum_{i= c+1}^{n-1}\beta_i \binom{(s/2) + i -1}{i}2^{2i-1}y= b_0y,\\
\Res(U_l) &= \sum_{j=1}^{s-1} e_{l,j}\Res(P_{c+j})= \sum_{i= c+1}^{n-1}e_{l, j-c} \binom{(s/2) + i -1}{i}2^{2i-1}y= a_l b_0y.
\end{align*}
\end{proof}

This allows us to get a set of generators for $A$, most of  which are mapped to zero under the restriction map \(\Res.\) 

\begin{lemma}\label{l4.3.9}
There exist $V_1, V_2, \ldots, V_{s-1} \in A$ such that there is an isomorphism $\Theta \colon A= \mZ[U_1, U_2, \ldots, U_{s-1}, \delta_n] \to \mZ[V_1, V_2, \ldots, V_{s-1}, \delta_n]$ with 
\[
\Res(V_1)= b_0y, \,\,\,\,\, \Res(V_l)= 0
\]
for all $l= 2, \ldots, s-1.$
\end{lemma}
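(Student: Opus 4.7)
The plan is to perform a simple change of variables that subtracts the appropriate multiple of $U_1$ from each of the remaining generators $U_2,\dots,U_{s-1}$, exactly so as to kill their image under $\Res$.

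Concretely, I would define
\[
V_1 := U_1, \qquad V_l := U_l - a_l U_1 \quad \text{for } l = 2, \ldots, s-1,
\]
where the integers $a_l$ are those produced in Lemma \ref{l4.3.8}. From $\Res(U_1) = b_0 y$ and $\Res(U_l) = a_l b_0 y$ we get immediately
\[
\Res(V_1) = b_0 y, \qquad \Res(V_l) = a_l b_0 y - a_l \cdot b_0 y = 0,
\]
which is the required behaviour under restriction.

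Next I would define the ring homomorphism $\Theta$ on generators by $U_1 \mapsto V_1$, $U_l \mapsto V_l$ for $2 \leq l \leq s-1$, and $\delta_n \mapsto \delta_n$, and check that it is an isomorphism. This is routine: with respect to the ordered generating sets $(U_1,\dots,U_{s-1},\delta_n)$ and $(V_1,\dots,V_{s-1},\delta_n)$, the transition matrix is lower triangular with all diagonal entries equal to $1$, hence an element of $GL_s(\mZ)$. Its inverse (also unipotent lower triangular) provides an explicit two-sided inverse to $\Theta$, so $\Theta$ is an isomorphism of polynomial rings over $\mZ$.

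There is essentially no obstacle here; the content of the lemma is purely linear-algebraic, and the only reason it is separated out is to record the generators $V_2,\dots,V_{s-1}$ that lie in $\ker(\Res)$. These will be the generators fed into property \textbf{Koz. 3} in the next step of the $Tor$ computation, producing the exterior factor $\Lambda^{\ast}[t_1,\ldots,t_{s-2}]$ visible in Theorem A.
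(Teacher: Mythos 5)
Your proposal is correct and matches the paper's own proof: the paper defines $V_1 := U_1$ and $V_l := U_l - a_l U_1$ exactly as you do, verifies the restriction values by the same direct computation, and concludes that $\Theta$ is an isomorphism because its transition matrix is unipotent triangular.
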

\begin{proof}
Define 
\begin{align*}
V_1 &:= U_1,\\
V_l &:= U_l- a_l U_1 ~~\text{for}~ l= 2, \ldots, s-1.
\end{align*}
Now the map $\Theta$ defined by
\begin{align*}
\Theta \colon A &\to \mZ[V_1, V_2, \ldots, V_{s-1}, \delta_n],\\
U_l &\mapsto V_l,\\
\delta_n &\mapsto \delta_n.
\end{align*}
The matrix of transformation of $\Theta$ is a triangular matrix with each of its diagonal entry as \(1\). Hence $\Theta$ is an isomorphism. A direct checking confirms our claim about the restriction map $\Res$.
\end{proof}
To consolidate, using Lemmas \ref{l4.3.6}, \ref{l4.3.8} and \ref{l4.3.9}, we have reassigned the generators  \(\Pi_{c+1}, \ldots, \Pi_{n-1}\) of \(A\) to \(V_1, V_2, \ldots, V_{s-1}\). Moreover, the submodule \(\mZ[V_2, \ldots, V_{s-1}]\) of \(A\) has a trivial \(A\)-module structure. Using the properties \textbf{Koz. (2)} and \textbf{Koz. (3)}, we deduce
\begin{proposition}\label{p4.3.10}
    \[Tor^{\ast}_A(B;\mZ) \cong \Lambda^{*}_{\mZ}[t_1, \cdots, t_{s-2}] \otimes_{\mZ} Tor^{\ast}_{A'}(B; \mZ) \]
    with \(dim \, t_i = 1\) and \(A' = \mZ[V_1, \delta_n]\) 
\end{proposition}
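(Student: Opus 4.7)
The plan is to apply property Koz. 3 of Section \ref{Torcomputation} directly, using the generator rewriting supplied by Lemma \ref{l4.3.9}. That lemma presents $A$ as the polynomial ring $\mZ[V_1, V_2, \ldots, V_{s-1}, \delta_n]$ with $\Res(V_l) = 0$ for $l = 2, \ldots, s-1$, so the elements $V_2, \ldots, V_{s-1}$ act trivially on $B$. The induced $A$-module structure on $B$ therefore factors through the quotient
\[
A / \langle V_2, \ldots, V_{s-1} \rangle \;\cong\; \mZ[V_1, \delta_n] \;=\; A'.
\]
Setting $\tau_i = V_{i+1}$ for $1 \leq i \leq s-2$, the second assertion of Koz. 3 will then yield the claimed splitting
\[
Tor^{\ast}_A(B; \mZ) \;\cong\; \Lambda^{\ast}_{\mZ}[t_1, \ldots, t_{s-2}] \otimes Tor^{\ast}_{A'}(B; \mZ),
\]
with $\dim t_i = 1$.

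If a self-contained justification is desired, I would verify it as follows. The elements $V_2, \ldots, V_{s-1}$ form a regular sequence in the polynomial ring $A$, so the Koszul complex $K_{\bullet}(V_2, \ldots, V_{s-1}; A)$ is a finite $A$-free resolution of $A'$. Tensoring an $A'$-projective resolution $F_{\bullet} \to B$ with this Koszul complex over $A'$ produces an $A$-free resolution of $B$ as the total complex of a first-quadrant double complex. Computing $Tor^{\ast}_A(B; \mZ)$ via this resolution and then applying $-\otimes_A \mZ$ annihilates every horizontal (Koszul) differential, since each $V_{i+1}$ goes to zero in the augmentation. The bicomplex therefore collapses on its $E_1$-page, and the homology splits as the claimed tensor product, with the exterior generators $t_i$ corresponding to the Koszul generators associated to $V_{i+1}$ and the $Tor^{\ast}_{A'}(B;\mZ)$ factor arising from the vertical differentials on $F_{\bullet}$.

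The main obstacle is not in the Koszul argument itself---which is formal given the setup above---but rather in verifying that the generator reassignments of Lemmas \ref{l4.3.6}--\ref{l4.3.9} really do realize $A$ as a polynomial ring in which exactly $s-2$ of the new generators lie in $\ker(\Res)$. That step is combinatorially delicate, relying on the $GL_{s-1}(\mZ)$ matrix supplied by Problem \ref{pGCDMatrix} to extract the $\gcd$ $b_0$ cleanly. Granted those reductions, the present proposition is a formal corollary of the Koszul machinery, and it reduces the remaining Tor computation to one over the much smaller ring $A' = \mZ[V_1, \delta_n]$, leaving only the two generators $V_1$ and $\delta_n$ to contend with in the next stage of the argument.
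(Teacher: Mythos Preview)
Your proposal is correct and follows essentially the same approach as the paper: both invoke the second assertion of Koz.~3 after Lemma~\ref{l4.3.9} has arranged that $V_2,\ldots,V_{s-1}$ lie in $\ker(\Res)$, so that $A/\langle V_2,\ldots,V_{s-1}\rangle \cong A'=\mZ[V_1,\delta_n]$. Your optional self-contained justification via the Koszul double complex is a welcome elaboration of what Koz.~3 encodes, but the paper simply cites Koz.~(2) and Koz.~(3) without further argument, treating the proposition as an immediate formal consequence of the preceding lemmas.
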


The above proposition implies that it is enough to find homology of the Koszul complex given by
\[ 
0 \xrightarrow{d_3} B(x_1\wedge x_2) \xrightarrow{d_2} Bx_1 \oplus Bx_2 \xrightarrow {d_1} B \rightarrow 0 
\]
Recall,    \(B \cong \mZ[\delta_c, y]/ I,\) where 
$I = \bigg\langle y^2+2y, \delta_c^2 + 2^{c+1}\delta_c - 2^{2c-1}y \left[1-\binom{(s/2)+c}{c}\right] \bigg\rangle.$

  Here,

 \begin{align*}
d_1(x_1) & := Res(V_1)= b_0y, ~~  ~ \text{where}~ b_0= GCD\left\{2^{2i-1}\binom{(s/2) + i -1}{i}\bigg | c+1 \leq i \leq n-1 \right\}. \\
d_1(x_2)&:= Res(\delta_n) =  2^{s-1}(2+y)\delta_c +2^{n-1}y  \\ 
d_2(x_1 \wedge x_2) &= b_0yx_2 - (2^{s-1}(2+y)\delta_c + 2^{n-1}y)x_1.
\end{align*}

\noindent ${\bf\underline{H_0}:}$
\begin{align*}
Ker(d_0) &=  B\\
Im(d_1) &= \langle  b_0y , 2^{s-1}(2+y)\delta_c +2^{n-1}y\rangle\\
H_0 &= B/ \langle b_0y,   2^{s-1}(2+y)\delta_c +2^{n-1}y \rangle B
\end{align*}

\noindent Note that \(y(2^{s-1}(2+y)\delta_c +2^{n-1}y) = -2^ny = 0\).\\
 Suppose \(GCD\{2^n, b_0\} = 2^{\alpha} \), then the relation  \(b_0y\) can be replaced by \(2^{\alpha}y\) in the K-ring expression.

\noindent ${\bf \underline{H_2}:}$

\begin{align*}
Im(d_3) &= 0\\
Ker(d_2) &= \bigg\{p(x_1 \wedge x_2) \bigg| d_2(p(x_1 \wedge x_2)) = 0 , p  \in B \bigg\} 
\end{align*}
Any \(p \in B\), can be written as \(p = p_1\delta_cy + p_2\delta_c + p_3y +p_4\).\\
By solving relation $d_2(p(x_1 \wedge x_2)) = 0,$ we get
\[
p_4= 2p_3, ~~~p_2= 2p_1, ~~~p_4= 2^{c+1}p_2.
\]
Hence, $p= (\delta_cy + 2\delta_c+ 2^{c+1}y+ 2^{c+2})= (2+y)(2^{c+1}+ \delta_c)$. \\
Let \(v= (2+y)(2^{c+1}+\delta_c)x_1 \wedge x_2 \in (x_1 \wedge x_2)B\), then
\[
Ker(d_2)=  v\mZ.
\]

\noindent ${\bf \underline{H_1}:}$
\begin{align*}
Im(d_2) &= \langle d_2(x_1 \wedge x_2) =  b_0yx_2 - (2^{s-1}(2+y)\delta_c + 2^{n-1}y)x_1\rangle B.\\
Ker(d_1) &= \bigg\{px_1+ qx_2 \bigg| pd_1(x_1)+ qd_1(x_2) =0; p,q \in B\bigg\}
\end{align*}
Let \( p = p_1\delta_cy + p_2\delta_c + p_3y +p_4\) and \( q = q_1\delta_cy + q_2\delta_c + q_3y + q_4\) for  \( p_1, \ldots, p_4 , q_1, \ldots q_4 \in \mZ\).\\
\(pd_1(x_1) +  q(d_1(x_2)) =0\) gives us the following system of equations:
\begin{align*}
q_4- 2^{c+1}q_2 &= 0\\ 
b_0p_2- 2p_1b_0+  2^{n-1}q_2-  2^nq_1 &=  0\\
-2b_0p_3+ p_4b_0- 2^nq_3+ 2^{n+c}q_2 &=  0 
\end{align*}

The solutions to the above system  yield the generators of \(Ker(d_1)\) which are discussed here. We discuss a  systematic way of choosing the generators  using Grobner basis in  Appendix A.
 
\begin{lemma}\label{lkergenerators}
The generators of $ker(d_1)$ are $\begin{cases} u_1, u_2, u_3 \qquad\quad\,\,\,  \text{when}\,\,\, \alpha= n\\ u_1, u_2, u_3, u_4 \qquad \text{when} \,\,\,\alpha< n \end{cases}$ where
\begin{align*}
u_1 &= (y+2)x_1,\\
u_2 &= (y+2) (2^{c+1}+ \delta_c) x_2,\\
u_3 &= 2^{n- \alpha}x_1+ 2^{-\alpha}b_0yx_2,\\
u_4 &= 2^{n-1-\alpha}\delta_cx_1 - 2^{-\alpha}b_0(\delta_c +2^cy + 2^{c+1})x_2.
\end{align*}
\end{lemma}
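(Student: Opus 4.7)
The plan is to reduce the kernel computation to a $\mZ$-linear Diophantine system and then exhibit explicit solutions corresponding to the $u_i$.

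First, I would fix the $\mZ$-basis $\{1, y, \delta_c, \delta_c y\}$ of $B$, which is free as a $\mZ$-module thanks to the two relations defining the ideal $I$. Writing $p = p_1\delta_c y + p_2\delta_c + p_3 y + p_4$ and $q = q_1\delta_c y + q_2\delta_c + q_3 y + q_4$, I expand $p\,d_1(x_1) + q\,d_1(x_2)$ and reduce using the identities $y^2 = -2y$, $y(y+2) = 0$, and $\delta_c^2 + 2^{c+1}\delta_c = 2^{2c-1}y\bigl[1 - \binom{(s/2)+c}{c}\bigr]$. Matching coefficients of the four basis elements yields exactly the three Diophantine equations already listed just before the lemma.

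Second, I would verify that each candidate $u_i$ lies in $\ker(d_1)$ by direct substitution. The vanishing for $u_1$ and $u_3$ follows immediately from $y(y+2) = 0$ and $y^2 = -2y$ respectively. For $u_2$ and $u_4$ one additionally invokes the quadratic identity for $\delta_c^2$: in $u_2$, the factor $(y+2)$ combined with $y(y+2) = 0$ reduces the check to the $\delta_c^2$ relation multiplied by $y(y+2)$, which vanishes; $u_4$ requires a longer but structurally identical computation in which the cross terms collapse by the same two identities.

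The substantive step is to show that every element of $\ker(d_1)$ is a $B$-linear combination of the $u_i$. The first Diophantine equation forces $q_4 = 2^{c+1}q_2$ and this degree of freedom contributes multiples of $u_2$. The remaining two equations each take the form $b_0\cdot(\text{$\mZ$-expression in }p_i) = 2^{n-1}\cdot(\text{$\mZ$-expression in }q_i)$, whose integer solvability is governed by $\gcd(b_0, 2^n) = 2^\alpha$. Writing a Bezout relation $2^\alpha = \lambda\,b_0 + \mu\cdot 2^n$ lets me lift each such integer solution explicitly to a combination of $u_3$ and $u_4$; the remaining free parameters $p_1, p_3$ produce multiples of $u_1$. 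When $\alpha = n$ the coefficient $2^{n-1-\alpha}$ in $u_4$ ceases to be a non-negative power of $2$, and the corresponding Bezout lift degenerates — the contribution that would have been $u_4$ is already in the $B$-span of $u_3$ and $\delta_c u_1$, explaining the case split.

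The main obstacle is precisely this last step: verifying that the parametrized family of integer solutions is genuinely $B$-generated by exactly these four (respectively three) elements, with no hidden generator missed. This is a finite but delicate bookkeeping task, and the authors indicate that a clean systematic treatment using Gröbner bases is given in Appendix A.
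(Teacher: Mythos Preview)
Your plan is essentially the paper's own argument: write $p,q$ in the $\mZ$-basis $\{1,y,\delta_c,\delta_cy\}$ of $B$, extract the Diophantine system, and then parametrize solutions via gcd's to exhibit each kernel element as a $B$-combination of the $u_i$. Two small corrections to your sketch: the two nontrivial Diophantine equations carry \emph{different} powers of $2$ --- one reads $b_0(p_2-2p_1)=2^{n-1}(2q_1-q_2)$ and the other $b_0(p_4-2p_3)=2^n(q_3-2^cq_2)$ --- so the paper introduces two gcds $g_1=\gcd(b_0,2^{n-1})$ and $g_2=2^\alpha=\gcd(b_0,2^n)$ and associated integers $\gamma_1,\gamma_2$; and in the degenerate case $\alpha=n$ the would-be $u_4$ contribution is absorbed not by $\delta_c u_1$ but by $\delta_c u_3 - 2^{-n}b_0 u_2$, as the paper's explicit decomposition $a=(p_1\delta_c+p_3)u_1+q_1u_2+\gamma_2u_3+\gamma_1(\delta_c u_3 - 2^{-n}b_0 u_2)$ shows. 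Neither point affects your overall strategy.
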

\begin{proof} Let $a \in ker(d_1).$ Then, $a= (p_1\delta_c y+ p_2\delta_c+ p_3 y+ p_4)x_1+ (q_1\delta_c y+ q_2\delta_c+ q_3 y+ q_4)x_2$ where $p_i, q_j \in \mathbb{Z}$ for $1 \leq i, j \leq 4.$ 

 Let $g_1:= GCD(b_0, 2^{n-1}).$ We get $\gamma_1 \in \mathbb{Z}$ such that 
\[
p_2- 2p_1= \gamma_1 (g_1^{-1}2^{n-1}) \,\,\text{and} \,\, 2q_1- q_2= \gamma_1 (g_1^{-1}b_0).
\] 
Let $g_2:= 2^\alpha= GCD(b_0, 2^n).$ We get $\gamma_2 \in \mathbb{Z}$ such that
\[
p_4- 2p_3= \gamma_2 (g_2^{-1}2^n) \,\,\text{and} \,\, q_3- 2^cq_2= \gamma_2 (g_2^{-1}b_0).
\]
\begin{enumerate}
\item If $\alpha= n,$ we have
$a= (p_1\delta_c+ p_3)u_1+ q_1u_2+ \gamma_2u_3+ \gamma_1(\delta_cu_3- 2^{-n}b_0u_2).$
\item If $\alpha \leq n-1,$ we get that 
$a= (p_1\delta_c+ p_3)u_1+ q_1u_2+ \gamma_2u_3+ \gamma_1u_4.$
\end{enumerate}
\end{proof}

\noindent Based on the preceding arguments, we have
\[Tor^{\ast}_{RSpin(m)}(RH_{m,2s}; \mZ) \cong \Lambda^{\ast}_{\mZ}[t_1, t_2, \ldots t_{s-2}, u_1, u_2, u_3,u_4,v] \otimes \mZ[y,\delta_c]/I\]
where $I$ has all the relations that hold in \(B, H_0, H_1\) and \(H_2.\) which we describe now.\\ 

\noindent {\bf Multiplication in \(Tor^{\ast}_{RSpin(m)}(RH_m,2s; \mZ)\)}: We know that if $A$ and $B$ are $R$-algebras, we have a multiplication
\[
\mu\colon Tor_R^m(A, B) \times_R Tor_R^n(A, B) \to Tor_R^{m+n}(A, B)
\]
which makes $Tor_R^\ast(A, B)$ into a graded associated $R$-algebra that is 'graded-commutative'. See \cite[Page 640, Exercise A3.20]{e}. In our case, multiplication turns out to be wedge product modulo relations obtained above.\\

\noindent {\bf Relations in $I$}: There are some immediate relations in $Tor^\ast,$ which are obtained by observations such as $(y+2)u_3= 2^{n-\alpha}u_1, u_2u_4= u_1u_3= 0$. For remaining relations, we consider various linear combinations and equate then to zero. Relations in $I$ are described in the following table. 
\[
\begin{tabular}{| m{1cm} | m{11cm} |}
\hline
\(B\) &  \begin{itemize} 
\item \(y^2+2y = 0 ,\)
\item $ \delta_c^2+ 2^{c+1}\delta_c- 2^{2c-1}y \bigg[1- \binom{(s/2)+c}{c} \bigg]= 0.$
\end{itemize} \\
\hline
\(H_0\)  & \begin{itemize} 
\item \(2^{\alpha}y = 0,\)
\item $2^{s-1}(2+y)\delta_c+ 2^{n-1}y= 0.$ 
\end{itemize}\\ 
\hline 
$H_1$ & \begin{itemize} 
\item $2^{\alpha}u_3 - (2^{s-1} \delta_c + 2^{n-1})u_1$
\item $(y+2)u_3= 2^{n-\alpha}u_1,$
\item $yu_1= yu_2= \delta_cu_2= 0,$
\item $(y+2)u_4= 2^{n-1-\alpha}\delta_cu_1- 2^{-\alpha}b_0u_2,$
\item $(y+2)(2^c\delta_c+1)u_3 +(y+2)\delta_cu_4=2^{n-\alpha}u_1 $,
\item $2^{c-\alpha+1}b_0u_2 + (y+2)(2^{c+1} +\delta_c)u_4 = 0$,
\item $2^{c-\alpha -1}b_0u_2 + (2^{c-1}\delta_c + 2^{2c-2}y\bigg[1- \binom{(s/2)+c}{c} \bigg] +2^{c-1}\delta_cy)u_3 + (2^c+\delta_c+2^cy+\delta_cy)u_4=0.$
\end{itemize} \\
\hline
$H_2$ & \begin{itemize}
\item $u_2u_4= u_1u_3= \delta_cu_1u_4= 0,$
\item $2^{n-\alpha}b_0u_1u_2+2^nu_1u_4+b_0u_2u_3 =0,$
\item $2^{n-2\alpha}b_0u_1u_2+2^{-\alpha}b_0u_2u_3 +2u_3u_4 = 0,$
\item $-2^{n-\alpha+1}u_1u_4 + 2^{-\alpha}b_0u_2u_3+2u_3u_4 = 0,$
\item $2^{n-2\alpha}b_0u_1u_2 + 2^{n-\alpha}u_1u_4+2u_3u_4=0.$
\end{itemize}\\
\hline 
\end{tabular}\\
\]

Now, since generators of $Tor^\ast$ have degree zero or one, we conclude that $K^{\ast}FV_{m,2s} \cong Tor^{\ast}_{RG}(RH, \mZ).$ We refer to \cite[Page 39-40, Section 6]{aguz} for a detailed relation between $E^\infty$ term and $K^\ast$ ring. Following result is a consequence of  \cite[Page 40, Corollary 6.5]{aguz}.

\begin{theorem}
For \(m \equiv 1 \pmod 2 \) and \(s \equiv 0 \pmod 4\), let $2^{\alpha}= GCD\{2^n, b_0\}$ with $b_0$= $GCD\displaystyle{\bigg\{\binom{(s/2) + i -1}{i}2^{2i-1}\bigg|i= c+1, c+2, \ldots, n-1\bigg\}}$ and 
\[
K^\ast(FV_{m,2s}) \cong \begin{cases}
\Lambda^{\ast}[t_1, t_2, \ldots t_{s-2}, u_1, u_2, u_3,v] \otimes \mZ[y,\delta_c]/I \hspace{0.9cm}\text{when} ~\alpha= n,\\
\Lambda^{\ast}[t_1, t_2, \ldots t_{s-2}, u_1, u_2, u_3,u_4,v] \otimes \mZ[y,\delta_c]/I  ~~~\text{when} ~\alpha< n,
\end{cases}
\]
where \(I\) is generated by $y^2+2y, ~\delta_c^2+ 2^{c+1}\delta_c- 2^{2c-1}y \bigg[1- \binom{(s/2)+c}{c} \bigg], ~2^{\alpha}y, ~2^{s-1}(2+y)\delta_c+ 2^{n-1}y, ~2^{\alpha}u_3 - (2^{s-1} \delta_c + 2^{n-1})u_1, ~(y+2)u_3- 2^{n-\alpha}u_1, ~yu_1, ~yu_2, ~\delta_cu_2, ~(y+2)u_42 2^{n-1-\alpha}\delta_cu_1- 2^{-\alpha}b_0u_2, ~(y+2)(2^c\delta_c+1)u_3 +(y+2)\delta_cu_4-2^{n-\alpha}u_1, ~2^{c-\alpha+1}b_0u_2 + (y+2)(2^{c+1} +\delta_c)u_4, ~2^{c-\alpha -1}b_0u_2 + (2^{c-1}\delta_c + 2^{2c-2}y\bigg[1- \binom{(s/2)+c}{c} \bigg] +2^{c-1}\delta_cy)u_3 + (2^c+\delta_c+2^cy+\delta_cy)u_4, ~u_2u_4, ~u_1u_3, ~\delta_cu_1u_4, ~2^{n\alpha}b_0u_1u_2+2^nu_1u_4+b_0u_2u_3, ~2^{n-2\alpha}b_0u_1u_2+2^{-\alpha}b_0u_2u_3 +2u_3u_4, ~-2^{n-\alpha+1}u_1u_4 + 2^{-\alpha}b_0u_2u_3+2u_3u_4, ~2^{n-2\alpha}b_0u_1u_2 + 2^{n-\alpha}u_1u_4+2u_3u_4$ and $u_1u_2- 2v.$
\end{theorem}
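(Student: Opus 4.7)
The plan is to apply the Hodgkin spectral sequence to $FV_{m,2s}\cong Spin(m)/H_{m,2s}$ and assemble the preceding subsections into the displayed presentation. Theorem~\ref{tRHmodds0mod4} furnishes $RH_{m,2s}$; Proposition~\ref{restriction map} gives the restriction map; and Proposition~\ref{p4.3.2} together with Lemmas~\ref{l4.3.6}--\ref{l4.3.9} reduces $\mathrm{Tor}^{*}_{RSpin(m)}(RH_{m,2s};\mZ)$ to $\Lambda^{*}_{\mZ}[t_{1},\ldots,t_{s-2}]\otimes\mathrm{Tor}^{*}_{A'}(B;\mZ)$, where $A'=\mZ[V_{1},\delta_{n}]$ and $B=\mZ[\delta_{c},y]/I_{B}$.

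I would then read $\mathrm{Tor}^{*}_{A'}(B;\mZ)$ off the two-term Koszul complex set up earlier. The cokernel of $d_{1}$ yields $H_{0}$ with the relations $2^{\alpha}y$ and $2^{s-1}(2+y)\delta_{c}+2^{n-1}y$; Lemma~\ref{lkergenerators} identifies $\ker(d_{1})$, so that modulo $\mathrm{im}(d_{2})$ one obtains $H_{1}$ generated by $u_{1},u_{2},u_{3}$ (and $u_{4}$ when $\alpha<n$); and $\ker(d_{2})=v\mZ$ with $v=(y+2)(2^{c+1}+\delta_{c})\,x_{1}\wedge x_{2}$ produces $H_{2}$. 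The multiplicative structure follows by taking wedge products of the explicit cocycle representatives for $u_{1},\ldots,u_{4}$: for instance $u_{1}u_{2}=(y+2)^{2}(2^{c+1}+\delta_{c})\,x_{1}\wedge x_{2}=2v$ using $(y+2)^{2}=2(y+2)$ in $B$, and every other $u_{i}u_{j}$ is recognised as an integer multiple of $v$ modulo $\mathrm{im}(d_{2})$.

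To pass from $E_{2}$ to $K^{*}(FV_{m,2s})$, I would argue collapse: all $\mathrm{Tor}$-generators live in filtration degree at most $2$, $d_{r}=0$ for $r$ even by the Hodgkin theorem, and a bidegree inspection (using that $v$ already lies in $\ker(d_{2})$) forces every odd $d_{r}$ with $r\ge 3$ to vanish, so $E_{2}=E_{\infty}$. The multiplicative extension is then resolved by \cite[Corollary~6.5]{aguz}, giving the stated ring structure. The main obstacle I foresee is the bookkeeping of the quadratic relations in $I$ coming from $H_{2}$: each product $u_{i}u_{j}$ has to be rewritten as an explicit multiple of $v$ modulo $\mathrm{im}(d_{2})$, and these computations blend the $2$-adic invariants $b_{0}$, $2^{\alpha}$, $2^{n}$ in nontrivial ways. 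Verifying that Lemma~\ref{lkergenerators} yields a minimal generating set for $\ker(d_{1})$ and systematically producing all the quadratic relations is precisely where the Gr\"obner basis procedure of Appendix~A becomes essential.
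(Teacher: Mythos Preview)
Your proposal is correct and follows essentially the same route as the paper: it invokes the same sequence of results (Theorem~\ref{tRHmodds0mod4}, Proposition~\ref{restriction map}, Proposition~\ref{p4.3.2}, Lemmas~\ref{l4.3.6}--\ref{l4.3.9}, Proposition~\ref{p4.3.10}, Lemma~\ref{lkergenerators}), computes $H_{0},H_{1},H_{2}$ of the two-variable Koszul complex in the same way, extracts the multiplicative relations by wedging the explicit cocycles, and then collapses the spectral sequence and resolves extensions via \cite[Corollary~6.5]{aguz}. Your collapse argument via bidegree inspection is exactly what underlies the paper's one-line appeal to the fact that the $\mathrm{Tor}$-generators sit in low degree.
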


\section{\(K^{\ast}(FV_{m,2s})\) for \(m \equiv 0 \pmod 2 \) and \(s \equiv 0 \pmod 4\)}\label{smevensequiv0}

 We recall that \(RSpin(2c) = \mZ[\Lambda^1, \cdots, \Lambda^{c-2}, \Delta^{\pm}_c]\), where \(\Lambda^i\) is induced by the projection of \(Spin(2c)\) onto \(SO(2c)\) and \(\Delta^{\pm}_c\) arise from the Clifford algebra. Using similar argument as in the previous case, we have
\begin{theorem}\label{tRHmevens0mod4}
\[RH_{m,2s} = \mZ[\Lambda^1, \cdots, \Lambda^{c-2}, {\Delta}^{\pm}_c, \theta]/\langle \theta^2-1\rangle = \mZ[\bar{\Pi}_1, \cdots, \bar{\Pi}_{c-2}, \bar{\Delta}^{\pm}_c, y]/\langle y^2+2y\rangle,\]
with  
\[ 
\Delta_c^2 = \sum_{i=0}^c 2^{2(c-i)}\bar{\Pi}_i, \quad {\Delta}_c^+{\Delta}_c^- = \sum_{i=0}^{c-1}2^{2(c-1-i)}\bar{\Pi}_i  \quad \text{and} \quad \chi^2 = \bar{\Pi}_c. \]    
\end{theorem}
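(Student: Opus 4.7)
The strategy would mirror the proof of Theorem \ref{tRHmodds0mod4}: I would realize $RH_{m,2s}$ as the subring $\mu^{\#}(RH_{m,2s}) \subset R\tilde\Omega \otimes RSpin(m-2s)$ carved out by Properties 1 and 2 of Section \ref{s3.3}, and then translate to the Pontryagin basis via $y = \theta - 1$. The only structural change from the odd case is that $RSpin(2c)$ now carries two half-spin characters $\Delta_c^{\pm}$ in place of the single $\Delta_c$, so the rows of the character table simply double while the underlying parity analysis on $R\tilde\Omega$ is unchanged.

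First I would record that for $s \equiv 0 \pmod 4$, Remark \ref{romegasquare} gives $\omega^2 = 1$, whence $\tilde\Omega \cong \mZ/4\mZ$, $R\tilde\Omega = \mZ[\phi]/(\phi^4 - 1)$, and $Ker(\mu) = \{(1,1),(\tilde\omega^2,1)\}$ exactly as in Section \ref{smoddsequiv0}. Then I would tabulate $\phi^j \Lambda^i$, $\phi^j \Delta_c^+$, and $\phi^j \Delta_c^-$ for $j = 0, 1, 2, 3$, evaluating each row both at the non-trivial element of $Ker(\mu)$ and under the translation $r^{\ast}_{(\tilde\omega^2,1)}$. Since the kernel element acts trivially on the $Spin$ factor, both Property 1 and Property 2 reduce to the single condition $\phi^j(\tilde\omega^2) = i^{2j} = (-1)^j = 1$, i.e.\ $j$ even. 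Consequently, the surviving generators of $\mu^{\#}(RH_{m,2s})$ are $\Lambda^1, \ldots, \Lambda^{c-2}$, $\Delta_c^+$, $\Delta_c^-$, together with $\theta := \phi^2$, and the only new relation on top of those in $RSpin(2c)$ is $\theta^2 - 1 = 0$, coming from $\phi^4 = 1$.

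To obtain the second presentation I would replace the $\Lambda^i$ by the Pontryagin classes $\bar{\Pi}_i$ (symmetric functions in $z_i^2 + z_i^{-2} - 2$ restricted to the maximal torus of $H_{m,2s}$) and substitute $y = \theta - 1$. Lemma \ref{simplifications} then converts $\theta^2 = 1$ into $y^2 + 2y = 0$, and the quadratic identities $(\Delta_c^{\pm})^2 = \sum_{i=0}^{c} 2^{2(c-i)} \bar{\Pi}_i$, $\Delta_c^+ \Delta_c^- = \sum_{i=0}^{c-1} 2^{2(c-1-i)} \bar{\Pi}_i$, and $\chi^2 = \bar{\Pi}_c$ are inherited directly from Theorem \ref{trspin}(2) applied inside $RSpin(m)$ and transported by $\mu^{\#}$, since $\mu^{\#}$ is a ring homomorphism and these are identities among the spin generators before the $\tilde\Omega$-augmentation.

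The only real bookkeeping hurdle is ensuring that no hybrid expression in $\phi$ and a nontrivial combination of $\Delta_c^{\pm}$ slips through the two properties and forces an additional generator. Because $\phi$ lives in the commutative tensor factor and its evaluation on $Ker(\mu)$ is independent of the $Spin$ component, the parity condition $j \equiv 0 \pmod 2$ is both necessary and sufficient row by row, so no spurious generators can appear and the argument collapses to the same routine verification as in the odd case.
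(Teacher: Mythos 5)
Your proposal is correct and follows essentially the same route as the paper, which itself only says ``using similar argument as in the previous case'': you correctly identify that for $s\equiv 0\pmod 4$ the kernel element $(\tilde\omega^2,1)$ has trivial $Spin$-component, so Properties 1 and 2 both reduce to $j$ being even, and the table from the odd case simply doubles its $\Delta$-rows into $\Delta_c^{\pm}$-rows. The only cosmetic slip is that the quadratic identities come from Theorem \ref{trspin}(2) applied to $RSpin(m-2s)=RSpin(2c)$ (with $\Delta_c=\Delta_c^++\Delta_c^-$, so $\Delta_c^+\Delta_c^-=(\Delta_c^2-\chi^2)/4$), not to $RSpin(m)$.
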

\begin{proposition}
The restriction map \(\Res: RSpin(m) = \mathbb{Z}[\Pi_1,\ldots, \Pi_{n-2}, \chi_n, \delta^{+}_n] \to RH_{m,2s} = \mZ[\bar{\Pi}_1, \cdots, \bar{\Pi}_{c-2}, \bar{\delta}^{\pm}_c, y]/\langle y^2+2y\rangle \) is determined in terms of its generators by
    \begin{align*}
    Res(\Pi_i) &= \sum_{j=0}^{c} \binom{s/2}{i-j}(-1)^{i-j-1}2^{2(i-j)-1}y \bar{\Pi}_j,\\
    Res(\chi_n) &= 0,\\
    Res(\delta_n^{+}) &= 2^{s-2}(y+2)\delta_c + 2^{n-2}y.
    \end{align*}
\end{proposition}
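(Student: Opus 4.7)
The plan is to mirror the proof of Proposition \ref{restriction map}: for each generator, push through $\mu^{\#}$ via the commutative diagram (\ref{dcommutative}) and evaluate at the standard torus using Lemma \ref{limageofz_i}. The Pontryagin generating function $\Pi[t] = \prod_{i=1}^n(1+t(z_i-z_i^{-1})^2)$ has the same definition in $RSpin(2n)$ and $RSpin(2n+1)$, so the argument used for $\Res(\Pi_i)$ in Proposition \ref{restriction map} transfers verbatim, producing $\Pi'[t] = (1+2ty)^{s/2}\bar\Pi[t]$ and hence the stated coefficients upon extracting powers of $t$.

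For $\Res(\chi_n) = 0$, where $\chi_n = \Delta_n^+ - \Delta_n^-$, I would invoke a symmetry argument at the torus level. The involution on $\{\pm 1\}^n$ flipping a single coordinate $\epsilon_i$ at an odd index $i \leq s$ (such an index exists because $s \equiv 0 \pmod 4$ forces $s \geq 4$) bijects $\{\prod \epsilon_j = 1\}$ with $\{\prod \epsilon_j = -1\}$. By Lemma \ref{limageofz_i}, $\mu^{\#}(z_i) = 1$ for such $i$, so $\mu^{\#}(z_i^{\epsilon_i})$ is independent of $\epsilon_i$ and $\mu^{\#}$ is invariant under this involution. Hence $\mu^{\#}(\Delta_n^+) = \mu^{\#}(\Delta_n^-)$, yielding $\Res(\chi_n) = 0$.

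For $\Res(\delta_n^+)$, the same pairing shows $\mu^{\#}(\Delta_n^+) = \tfrac{1}{2}\mu^{\#}(\Delta_n)$, and the computation of $\mu^{\#}(\Delta_n) = 2^{s-1}(y+2)\Delta_c$ in Proposition \ref{restriction map} applies unchanged since it relied only on the parity of $s$. Subtracting $2^{n-1}$ (because $\delta_n^+ = \Delta_n^+ - 2^{n-1}$) and using $n = s + c$ will produce $2^{s-2}(y+2)\delta_c + 2^{n-2}y$, provided we identify $\delta_c$ in the target with $\delta_c^+ + \delta_c^-$ (since $RSpin(2c)$ has generators $\delta_c^\pm$ rather than a single $\delta_c$, and $\Delta_c^+ + \Delta_c^- = \delta_c^+ + \delta_c^- + 2^c$). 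This notational reconciliation is the only real subtlety; once it is made explicit, every step reduces to a computation already performed in the odd case.
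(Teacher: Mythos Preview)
Your proposal is correct and ultimately rests on the same ingredients as the paper---the commutative diagram (\ref{dcommutative}), Lemma \ref{limageofz_i}, and the simplifications of Lemma \ref{simplifications}---but the tactic for the half-spin generators differs slightly. The paper evaluates the generating function $\Delta_n[t] = \prod_i(z_i + tz_i^{-1})$ at $t = \pm 1$ and pushes the whole product through $\mu^{\#}$ at once, obtaining $j^{\#}(\Delta_n[t]) = 2^{s-2}\bigl((1+\theta)\Delta_c + t(1+\theta)\Delta_c\bigr)$; the equality of the constant term and the coefficient of $t$ then yields $\Res(\chi_n)=0$ and $\Res(\delta_n^{\pm})$ simultaneously. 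You instead isolate $\Res(\chi_n)=0$ by an explicit involution on the sign tuples (flipping a single $\epsilon_i$ with $i\le s$ odd, which $\mu^{\#}$ cannot see) and then recycle the odd-case value of $\mu^{\#}(\Delta_n)$ to get $\Res(\Delta_n^{+}) = \tfrac12\,\mu^{\#}(\Delta_n)$. Your route is a touch more conceptual for the vanishing of $\chi_n$ and avoids recomputing anything, at the cost of having to spell out the identification $\delta_c = \delta_c^{+}+\delta_c^{-}$ in the target; the paper's generating-function route is more uniform and produces both $\Res(\delta_n^{+})$ and $\Res(\delta_n^{-})$ without that bookkeeping. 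Either way the arithmetic closes to the stated formula.
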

\begin{proof}
As in Proposition \ref{restriction map}, we have
\[Res(\Pi_i) = \sum_{j=0}^{c} \binom{s/2}{i-j}(-1)^{i-j-1}2^{2(i-j)-1}y \bar{\Pi}_j.\]
To calculate \(Res(\chi_n)\) and \(Res(\delta_n^{+})\), we consider the relation \(\Delta_n[t] = \prod_{j=1}^n(z_j+tz_j^{-1})\). \\
For \(t = \pm 1\), we have \(\Delta_n[t]=\Delta_n^{+}+t\Delta_n^-.\\\)
Hence for \(t = \pm 1,\)
\begin{align*}
    j^{\#}(\Delta_n[t]) &= j^{\#}  \left(\prod_{i=1}^n(z_i+tz_i^{-1}) \right)\\
    &= j^{\#}  \left(\prod_{i=1 \atop i \equiv 0 (2)}^s\left(z_i+tz_i^{-1}\right)\prod_{i=1 \atop i \equiv 1 (2)}^s\left(z_i+tz_i^{-1}\right)\prod_{i=s+1}^n\left(z_i+tz_i^{-1}\right) \right)\\
    &= j^{\#}  \left(\prod_{i=1 \atop i \equiv 0 (2)}^s\left(1+t\right)\prod_{i=1 \atop i \equiv 1 (2)}^s\left(-\phi+t(-\phi^3)\right)\prod_{i=s+1}^n\left(z_i+tz_i^{-1}\right) \right)\\
   &= (1+t)^{s/2}\phi^{s/2}\left(1 +t\theta\right)^{s/2}\Delta_c[t] \quad \quad \quad \quad \quad \quad \quad \quad \quad \quad \quad \quad \quad \quad (\text{From Lemma}~ \ref{limageofz_i})\\
   &=    \begin{cases}
   ~~ (1+t)^{s/2}\left(1 +t\theta\right)^{s/2}\Delta_c[t]  \quad \quad ~~~~~\text{for} ~ s/4 \equiv 0 \pmod 2\\
   ~~ (1+t)^{s/2}\theta\left(1 +t\theta\right)^{s/2}\Delta_c[t] \quad \quad ~~~~\text{for} ~ s/4 \equiv 1 \pmod 2
         \end{cases}\\
    &=    \begin{cases}
   ~~ 2^{s-2}(1+t)\left(1 +t\theta\right)\Delta_c[t]  \quad \quad ~~~~~\text{for} ~ s/4 \equiv 0 \pmod 2\\
   ~~ 2^{s-2}(1+t)\theta\left(1 +t\theta\right)\Delta_c[t] \quad \quad ~~~~\text{for} ~ s/4 \equiv 1 \pmod 2
         \end{cases} \quad \quad \quad (\text{From Lemma } \ref{simplifications})\\
    &=2^{s-2}\left((1+\theta)+t(1+\theta)\right)\Delta_c[t]\\
    &=2^{s-2}\left((1+\theta)+t(1+\theta)\right)\left(\Delta_c^{+}+t\Delta_{c}^{-1}\right)\\
    &= 2^{s-2}\left((1+\theta)\Delta_c + t(1+\theta)\Delta_c\right).
    \end{align*}
So, \[Res(\delta_n^{\pm}) = 2^{s-2}(y+2)\delta_c + 2^{n-2}y \quad \quad \text{and} \quad \quad Res(\chi_n)  = 0.\]
\\
Furthermore, we observe that the Lemma \ref{lrelationspibarandpi} works here as well and we have \[
\bar{\Pi}_i = \sum_{j =0}^{i} (-1)^j\binom{s/2 + j - 1}{j}(2y)^j\Pi^{'}_{i-j}.
\]
In fact, we have
\[
Tor_{RSpin(m)}^\ast(RH_{m, 2s}, \mZ) \equiv Tor^\ast_{A}(B, \mZ)
\]
where
\[
A= RSpin(m)/ \langle \Pi_1, \Pi_2, \ldots, \Pi_c\rangle \,\,\,\text{and} \,\,\,\, B= RH_{m,2s}/ \langle \Pi_1^\prime, \Pi_2^\prime, \ldots, \Pi_c^\prime \rangle.
\]

\begin{corollary}

In $B$, we have

\begin{enumerate}

\item For \( c+1 \leq i \leq n\), \[\displaystyle \quad \quad \quad \Pi^{'}_i = \binom{(s/2) + i -1}{i}2^{2i-1}y - \sum_{j=c+1}^{i-1} \binom{(s/2)+i-j-1}{i-j}2^{2(i-j)}\Pi_j^{'}.\]

\item \(\delta_c^2 + 2^{c+1}\delta_c = 2^{2c-1}y \left[1-\binom{(s/2)+c}{c}\right]\).\\

\item \(\delta_c^+\delta_c^{-} +2^{c-1}\delta_c = 2^{2c-3}\left[1-\binom{(s/2)+c-1}{c-1}\right]\).

\item \(\delta_c\delta_c^{+} = 2^{2c-3}y\left[1-\binom{(s/2)+c-1}{c-1}\right] - 2^{c-1}\delta_c + (\delta_c^{+})^2.\)

\item \(\chi^2 = -\binom{(s/2)+c-1}{c}2^{2c-1}y.\)

\end{enumerate}
\end{corollary}

\begin{proof}

Note that proof of \((1)\) follows the argument of Corollary \ref{generatorsofB} verbatim. The rest of the results are obtained by evaluating for \(\bar{\Pi}_{i}\) (determined in Lemma \ref{generatorsofB}) in the relations listed in Theorem \ref{tRHmevens0mod4}.

\end{proof}

To conclude,
\begin{proposition}

As a ring, \(B \cong \mZ[\delta_c^{+},\delta_c, y]/I,\) where \(I = \langle y^2+2y, \delta_c^2 + 2^{c+1}\delta_c-2^{2c-1}y\left[1-\binom{s/2+c}{c}\right], \delta_c^+\delta_c^{-} +2^{c-1}\delta_c - 2^{2c-3}\left[1-\binom{(s/2)+c-1}{c-1}\right], \delta_c\delta_c^{+} - 2^{2c-3}y\left[1-\binom{(s/2)+c-1}{c-1}\right] + 2^{c-1}\delta_c - (\delta_c^{+})^2 \rangle. \)

\end{proposition}
We now describe the structure of $A.$  Similar to the pervious case, we perform a series of transformations on the generators of A such that most of the new generators belong to the kernel of the restriction map, thereby, allowing us to lessen the computational complexity associated with \(Tor^\ast_{A}(B, \mZ).\) The justification of the following statements is same as in the case of \(K^{\ast}(FV_{m,2s})\) for \( m \equiv 1 \pmod 2 \) and \( s \equiv 0 \pmod 4.\)

\begin{enumerate}
\item First, we choose $P_{c+1}, \ldots, P_{n-2}$ in $A$ such that there is an isomorphism between $A= \mZ[\Pi_{c+1}, \ldots, \Pi_{n-2}, \chi_, \delta_n^{+}]$ and $\mZ[P_{c+1}, \ldots, P_{n-2}, \chi, \delta_n^{+}]$ and
\[
\Res(P_i)= \binom{(s/2) + i -1}{i}2^{2i-1}y.
\]
\item Then, we choose elements $U_{1}, U_{2}, \ldots, U_{s-2}$ such that there is an isomorphism $\Psi: A= \mZ[P_{c+1}, P_{c+2}, \ldots, P_{n-2}, \chi, \delta_n^{+}] \to \mZ[U_1, U_2, \ldots, U_{s-2}, \chi, \delta_n^{+}]$ with
\[
\quad \quad \quad \Res(U_1)= b_0y \,\,\,\, and \,\,\,\,\, \Res(U_l)= a_l b_0y\,\,\,\,\, \text{for all} \,\,\,\, l= 2, \ldots, s-2.
\]
where,
\begin{itemize}
\item $b_0$= GCD $\displaystyle{\bigg\{\binom{(s/2) + i -1}{i}2^{2i-1}\bigg|i= c+1, c+2, \ldots, n-2\bigg\}}.$
\item $a_l \in \mZ.$
\end{itemize}
\item Futher, we choose elements $V_1, V_2, \ldots, V_{s-2} \in A$ such that there is an isomorphism $\Theta \colon A= \mZ[U_1, U_2, \ldots, U_{s-2},\chi, \delta_n^{+}] \to \mZ[V_1, V_2, \ldots, V_{s-2}, \chi, \delta_n^{+}]$ with
\[
\Res(V_1)= b_0y, \,\,\,\,\, and \,\,\,\,\, \Res(V_l)= 0 \,\,\,\, \, \text{for all }\,\,\, 2 \leq l \leq s-2.
\]
\end{enumerate}

\begin{proposition}
    \[Tor^{\ast}_A(B;\mZ) \cong \Lambda^{*}_{\mZ}[t_1, \cdots, t_{s-2}] \otimes_{\mZ} Tor^{\ast}_{A'}(B; \mZ) \]
    with \(dim \, t_i = 1\) and \(A' = \mZ[V_1, \delta_n^{+}]\) 
\end{proposition}

\end{proof}
Now, we are required to compute the homology of the Koszul complex given by
\[ 
0 \xrightarrow{d_3} B(x_1\wedge x_2) \xrightarrow{d_2} Bx_1 \oplus Bx_2 \xrightarrow {d_1} B \rightarrow 0 
\]

Recall,    \(B \cong \mZ[\delta_c, y]/ I,\) where \\
$I = \bigg\langle y^2+2y, \delta_c^2 + 2^{c+1}\delta_c - 2^{2c-1}y \left[1-\binom{(s/2)+c}{c}\right], \delta_c\delta_c^{+} -2^{2c-3}y\left[1-\binom{(s/2)+c-1}{c-1}\right]+2^{c-1}\delta_c -(\delta_c^{+})^2 \bigg\rangle.$\\Here,

 \begin{align*}
d_1(x_1) & := Res(V_1)= b_0y, \\
d_1(x_2)&:= Res(\delta_n) =  2^{s-2}(2+y)\delta_c +2^{n-2}y.
\end{align*}
Therefore, 
\begin{align*}
H_0 &= \frac{B} {\langle 2^{\alpha}y,   2^{s-2}(2+y)\delta_c +2^{n-2}y\rangle B}~, \\
H_1 &= \frac{\langle u_1, u_2, u_3, u_4 \rangle B}{  (byx_2 - (2^{s-1}(2+y)\delta_c + 2^{n-1}y)x_1 )B}~,\\
H_2 &=  vB.  
\end{align*}  where \(2^{\alpha} = GCD(2^{n-1},b_0)\) and 
\begin{align*}
u_1 &= (y+2)x_1,\\
u_2 &= (y+2) (2^{c+1}+ \delta_c) x_2,\\
u_3 &= 2^{n- \alpha-1}x_1+ 2^{-\alpha}b_0yx_2,\\
u_4 &= 2^{n-\alpha-2}\delta_cx_1 - 2^{-\alpha}b_0(\delta_c +2^cy + 2^{c+1})x_2,\\
v &=(2+y)(2^{c+1}+\delta_c)x_1 \wedge x_2.
\end{align*}

\begin{theorem}
For \(m \equiv 0 \pmod 2 \) and \(s \equiv 0 \pmod 4\), let $2^{\alpha}= GCD\{2^{n-1}, b_0\}$ with $b_0= GCD \displaystyle{\bigg\{\binom{(s/2) + i -1}{i}2^{2i-1}\bigg|i= c+1, c+2, \ldots, n-2\bigg\}}.$ Then
\[
K^\ast(FV_{m,2s}) \cong \begin{cases}
\Lambda^{*}_{\mZ}[t_1, \cdots, t_{s-2}, u_1, u_2, u_3,v] \otimes_{\mZ} \mZ[\delta_c, y]/I,\\
\Lambda^{*}_{\mZ}[t_1, \cdots, t_{s-2}, u_1, u_2, u_3,u_4,v] \otimes_{\mZ} \mZ[\delta_c, y]/I,
\end{cases}
\]
where \(I\) is generated by the relations $y^2+2y, ~\delta_c^2+ 2^{c+1}\delta_c- 2^{2c-1}y \bigg[1- \binom{(s/2)+c}{c} \bigg], ~\delta_c\delta_c^{+} + 2^{c-1}\delta_c - 2^{2c-3}y\bigg[1-\binom{(s/2)+c-1}{c-1}\bigg]-(\delta_c^{+})^2, ~2^{\alpha}y, ~2^{s-2}(2+y)\delta_c+ 2^{n-2}y, ~2^{\alpha}u_3 - (2^{s-2} \delta_c + 2^{n-2})u_1, ~(y+2)u_3- 2^{n-\alpha-1}u_1, ~yu_1, ~yu_2, ~\delta_cu_2, ~(y+2)u_4- 2^{n-2-\alpha}\delta_cu_1- 2^{-\alpha}b_0u_2, ~(y+2)(2^c\delta_c+1)u_3 +(y+2)\delta_cu_4-2^{n-\alpha-1}u_1, ~2^{c-\alpha+1}b_0u_2 + (y+2)(2^{c+1} +\delta_c)u_4, ~2^{c-\alpha -1}b_0u_2 + (2^{c-1}\delta_c + 2^{2c-2}y\bigg[1- \binom{(s/2)+c}{c} \bigg] +2^{c-1}\delta_cy)u_3 +(2^c+\delta_c+2^cy+\delta_cy)u_4, ~u_2u_4, u_1u_3, \delta_cu_1u_4, ~2^{n-\alpha-2}b_0u_1u_2+2^nu_1u_4-b_0u_2u_3, ~2^{n-2\alpha-1}b_0u_1u_2+2^{-\alpha}b_0u_2u_3 +2u_3u_4, ~-2^{n-\alpha}u_1u_4 + 2^{-\alpha}b_0u_2u_3+2u_3u_4, ~2^{n-2\alpha-1}b_0u_1u_2 + 2^{n-\alpha-1}u_1u_4+2u_3u_4$ and $u_1u_2- 2v.$
\end{theorem}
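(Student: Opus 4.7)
The plan is to mirror verbatim the strategy used in Section \ref{smoddsequiv0}, adapted to the even-dimensional Spin group, and to collect the change-of-rings data, Koszul homology, and multiplicative relations in the three strata $H_0, H_1, H_2$ shown above. The results already proved in this section (the description of $RH_{m,2s}$ in Theorem \ref{tRHmevens0mod4}, the restriction map, and the reduction $Tor^\ast_{RSpin(m)}(RH_{m,2s};\mZ)\cong Tor^\ast_{A'}(B;\mZ)\otimes \Lambda^\ast[t_1,\ldots,t_{s-2}]$ with $A'=\mZ[V_1,\delta_n^{+}]$) do most of the preparatory work, so the task reduces to computing the homology of the two-term Koszul complex
\[
0\to B(x_1\wedge x_2)\xrightarrow{d_2} Bx_1\oplus Bx_2\xrightarrow{d_1} B\to 0,
\]
where $d_1(x_1)=b_0y$ and $d_1(x_2)=2^{s-2}(2+y)\delta_c+2^{n-2}y$, and then assembling the product structure.

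The first step is to record $H_0=B/\mathrm{Im}\,d_1$. Multiplying the second generator of $\mathrm{Im}\,d_1$ by $y$ gives $-2^{n-1}y$, and combined with $b_0y$ this forces $2^\alpha y=0$ for $2^\alpha=\mathrm{GCD}(2^{n-1},b_0)$ (note the shift by one power of $2$ from the odd case coming from $Res(\delta_n^{+})=2^{s-2}(y+2)\delta_c+2^{n-2}y$). Next I compute $H_2=\ker d_2$ by writing a general $p\in B$ in the basis $\{1,y,\delta_c,\delta_c y\}$ (the extra generator $\delta_c^{+}$ is absorbed through the relation $(\delta_c^{+})^2=\delta_c\delta_c^{+}+2^{c-1}\delta_c-2^{2c-3}y[1-\binom{s/2+c-1}{c-1}]$, so this does not enlarge the basis over the relevant subring) and solving $d_2(p(x_1\wedge x_2))=0$; exactly as in Section \ref{smoddsequiv0} this pins down $p=(2+y)(2^{c+1}+\delta_c)$ up to an integer, giving $H_2=v\mZ$ with $v=(2+y)(2^{c+1}+\delta_c)x_1\wedge x_2$. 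For $H_1$ I solve $d_1(px_1+qx_2)=0$; writing $p,q$ in the $\{1,y,\delta_c,\delta_c y\}$-basis yields three integer linear equations whose solution set has an explicit generating tuple. This gives $u_1,u_2,u_3$ when $\alpha=n-1$ and the additional generator $u_4$ when $\alpha<n-1$, with the shifted formulas
\[
u_3=2^{n-\alpha-1}x_1+2^{-\alpha}b_0yx_2,\qquad u_4=2^{n-\alpha-2}\delta_c x_1-2^{-\alpha}b_0(\delta_c+2^cy+2^{c+1})x_2.
\]

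With $H_0,H_1,H_2$ in hand, the multiplicative structure $Tor^\ast_A(B;\mZ)$ inherited from $Tor^\ast_{RSpin(m)}(RH_{m,2s};\mZ)$ is the wedge product modulo the relations listed in the three blocks displayed in the statement. Each relation in the block labelled $B$ comes from Theorem \ref{tRHmevens0mod4} (now including the new relation $\delta_c\delta_c^{+}+2^{c-1}\delta_c-2^{2c-3}y[1-\binom{s/2+c-1}{c-1}]-(\delta_c^{+})^2$ arising from the $Spin(2c)$ side), the $H_0$ block from setting $d_1$ equal to zero in the quotient, the $H_1$ block from expressing dependencies among $u_1,\ldots,u_4$ modulo $\mathrm{Im}\,d_2$, and the $H_2$ block from computing pairwise products $u_iu_j$ in $(x_1\wedge x_2)B$ and reducing them against the ideal defining $H_2$. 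Since all generators live in Hodgkin filtration degrees $0$ or $1$, the collapse argument of \cite[Page 40, Corollary 6.5]{aguz} identifies $E_\infty^\ast$ with $K^\ast(FV_{m,2s})$ on the nose, and the relation $u_1u_2=2v$ follows from the explicit expansion of $(y+2)^2x_1\wedge (2^{c+1}+\delta_c)x_2$ together with $y^2=-2y$.

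The main obstacle is the bookkeeping for $H_1$ and the product block in $H_2$: the shift from $2^{s-1},2^{n-1}$ in the odd case to $2^{s-2},2^{n-2}$ here, together with the presence of $\chi$ with $Res(\chi)=0$ and the extra $B$-relation from $\delta_c^{+}$, forces one to recheck that the change-of-rings reduction still folds $\chi$ into the trivial exterior factor and that the Gröbner-basis selection of $\ker d_1$ generators (analogous to Lemma \ref{lkergenerators}) still terminates with exactly four generators when $\alpha<n-1$. Once these are verified, the $H_2$-relations are mechanical bilinear expansions and the final ideal $I$ is assembled by concatenation of the four blocks as displayed, completing the identification of $K^\ast(FV_{m,2s})$.
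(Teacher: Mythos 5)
Your proposal follows essentially the same route as the paper: the paper's own treatment of this case is precisely to transplant the Section 4 argument with the shifts $2^{s-1}\mapsto 2^{s-2}$, $2^{n}\mapsto 2^{n-1}$ (so $2^{\alpha}=\mathrm{GCD}(2^{n-1},b_0)$), fold $\chi$ (which restricts to zero) into the exterior factor $\Lambda^{\ast}[t_1,\ldots,t_{s-2}]$, and read off the same Koszul homology generators $u_1,\ldots,u_4,v$ and relation blocks that you obtain. Your handling of the extra generator $\delta_c^{+}$ of $B$ is, if anything, slightly more explicit than the paper's, which silently reverts to the basis $\{1,y,\delta_c,\delta_c y\}$ when computing $H_1$ and $H_2$.
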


\section{\(K^{\ast}(FV_{m,2s})\) for \(s \equiv 2 \pmod 4\)}\label{smoddsequiv2}
We know, from Remark \ref{romegasquare} that for $s \equiv 2 \pmod{4},$ we have $\omega^2= -1,$ which entails a significant difference in the characterisation of the ring \(RH_{m,2s}\) from that of \(s \equiv 0 \pmod 4.\)  We continue to use the commutative diagram \ref{dcommutative} , Property 1 and Property 2 in Section \ref{srep_rings} and we have the following analysis. Notice that since representation rings of $Spin(m)$ depend on parity of $m,$ we work out even and odd case separately. 

\subsection{For \(m \equiv 1 \pmod 2\)}
As in the previous two cases, we first determine \(RH_{m,2s}.\) Here \(Ker(\psi) = \{(1,1), (\tilde\omega^2,-1)\}\). 
We have similar descriptions for the rings \(R\tilde\Omega, RSpin(2c+1)\) and \(R(H_{m,2s}/Spin(2c+1))\) as in the case where \(s \equiv 0 \pmod 4.\) We recall that the representations \(\Lambda^i \in RSpin(2c+1)\) are induced by \(p: Spin(2c+1) \to SO(2c+1)\) whose kernel is \(\left\{\pm 1\right\}\). Thus, \(\Lambda^i(-1) = I\). Whereas, the representation \(\Delta_c\) comes from the operation of left multiplication of \(Spin(2c+1)\) on \(C^0_{2c+1}\), where \(C_{2c+1} (= C^0_{2c+1} \oplus C^1_{2c+1})\) is the real Clifford algebra, as discussed in Section 2, associated to the quadratic form \(Q(x)=-|x|^2\) with \(\mZ/2\mZ\)-grading. So \(\Delta_c(-1) = -I\).
We now pick generators of \(RH_{m,2s}\) from \(R\tilde\Omega \otimes RSpin(2c+1)\) by checking for triviality on  \(Ker(\psi)\) and  fixated representations of the codomain under \(r_h^{\ast}, \) for  (as illustrated in Section \ref{RHsubring}).\\

\[
\begin{tabular}{|c|c|c|c|c|c||c|c|c|c|c|}
 \hline
 $\rho$  & $\rho(\tilde\omega^2,-1)$ &  \multicolumn{4}{c|}{$\rho(g,g')$}& \multicolumn{4}{c|}{$\rho(\tilde\omega^2g,-g')$}& $r^{\ast}_{(\tilde\omega^2,-1)}$\\
 \hline
       &     & $g=1$ & $g=\tilde\omega$ & $g=\tilde\omega^2$ & $g=\tilde\omega^3$ & $g=1$ & $g=\tilde\omega$ & $g=\tilde\omega^2$ & $g=\tilde\omega^3$ & \\
 \hline
 $\Lambda^i$ & I  & $\Lambda^i$ & $\Lambda^i$ & $\Lambda^i$ & $\Lambda^i$ & $\Lambda^i$ & $\Lambda^i$ & $\Lambda^i$ & $\Lambda^i$ & $\Lambda^i$\\
 $\Delta_c$ & -I &  $\Delta_c$ &  $\Delta_c$ &  $\Delta_c$ &  $\Delta_c$ &  $-\Delta_c$ &  $-\Delta_c$ &  $-\Delta_c$ &  $-\Delta_c$ &  $-\Delta_c$\\
 $\phi \Lambda^i$ & -I & $\Lambda^i$ & $i\Lambda^i$ & $-\Lambda^i$ & $-i\Lambda^i$ & $-\Lambda^i$ & $-i\Lambda^i$ & $\Lambda^i$ & $i\Lambda^i$ & $-\phi\Lambda^i$\\
 $\phi \Delta^c$ & I &  $\Delta_c$ &  $i\Delta_c$ &  $-\Delta_c$ &  $-i\Delta_c$ &  $\Delta_c$ &  $i\Delta_c$ &  $-\Delta_c$ &  $-i\Delta_c$ &  $\phi\Delta_c$\\
 $\phi^2  \Lambda^i$ & I & $\Lambda^i$ & $-\Lambda^i$ & $\Lambda^i$ & $-\Lambda^i$ & $\Lambda^i$ & $-\Lambda^i$ & $\Lambda^i$ & -$\Lambda^i$ & $\phi^2\Lambda^i$\\
 $\phi^2  \Delta^c$ & -I &  $\Delta_c$ &  -$\Delta_c$ &  $\Delta_c$ &  $-\Delta_c$ &  $-\Delta_c$ &  $\Delta_c$ &  $-\Delta_c$&  -$\Delta_c$ & $-\phi^2 \Delta_c$\\
 $\phi^3  \Lambda^i$ & -I & $\Lambda^i$& $-i\Lambda^i$ & $-\Lambda^i$ & $i\Lambda^i$ & $-\Lambda^i$ & $i\Lambda^i$ & $\Lambda^i$ & $-i\Lambda^i$ & $-\phi^3\Lambda^i$\\
 $\phi^3 \Delta^c$ & I &  $\Delta_c$ &  $-i\Delta_c$ &  $-\Delta_c$ &  $i\Delta_c$ &  $\Delta_c$ &  $-i\Delta_c$ &  $-\Delta_c$ &  $i\Delta_c$ &  $\phi^3\Delta_c$
 \\
 \hline
\end{tabular}
\]
Following analogous notations as in Theorem \ref{tRHmodds0mod4},
\begin{theorem}\label{tRHmodds2mod4}
\[
RH_{m,2s} = \mZ[\Pi_1, \ldots, \Pi^{c-1}, \bar{\Delta}_c = \phi\Delta_c, y]/\langle y^2+2y\rangle, 
\]
with  \[ \bar{\Delta}_c^2 = \theta\sum_{i=0}^c 2^{2(c-i)}\bar{\Pi}_i.\]
    
\end{theorem}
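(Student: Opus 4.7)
The plan is to mirror the two-property filtering carried out in Section \ref{RHsubring}, but with the key difference that $\omega^2 = -1$ for $s \equiv 2 \pmod 4$ (by Remark \ref{romegasquare}). Consequently $\ker \mu = \{(1,1), (\tilde\omega^2, -1)\}$ rather than $\{(1,1), (\tilde\omega^2, 1)\}$. This single change is what forces a different subring of $R\tilde\Omega \otimes RSpin(2c+1)$ to appear as $\mu^{\#}(RH_{m,2s})$, because the spin representation $\Delta_c$ satisfies $\Delta_c(-1) = -I$ (arising from left multiplication of $Spin(2c+1)$ on $\mathcal{C}_{2c+1}^{0}$), whereas the representations $\Lambda^i$ come from $Spin(2c+1) \to SO(2c+1)$ and hence satisfy $\Lambda^i(-1) = I$.

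First, I would run through each candidate generator $\phi^j \otimes \rho$ with $0 \le j \le 3$ and $\rho \in \{\Lambda^i, \Delta_c\}$, evaluating at $(\tilde\omega^2, -1)$ to check Property 1 and then computing $r_{(\tilde\omega^2, -1)}^{\ast}$ to check Property 2. This is precisely the content of the table already displayed in the excerpt. Reading off the final column, the representations that are simultaneously trivial on $\ker\mu$ and fixed by $r^{\ast}_{(\tilde\omega^2,-1)}$ are $\Lambda^i$, $\phi\Delta_c$, $\phi^2\Lambda^i$, and $\phi^3\Delta_c$. Note $\phi^3\Delta_c = \phi^2 \cdot (\phi\Delta_c) = \theta \cdot \bar\Delta_c$ is redundant, and $\phi^2\Lambda^i = \theta\Lambda^i$ is generated by $\theta$ and $\Lambda^i$. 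Thus the algebra is generated by $\Lambda^1, \ldots, \Lambda^{c-1}$, $\bar\Delta_c := \phi\Delta_c$, and $\theta = \phi^2$.

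Next, I would rewrite in the standard Pontryagin basis: replace $\Lambda^i$ by $\bar\Pi_i$ ($1 \le i \le c-1$) exactly as in the case $s \equiv 0 \pmod 4$, and set $y = \theta - 1$. The relation $y^2 + 2y = 0$ is immediate from $\theta^2 = \phi^4 = 1$ since $\tilde\omega$ has order $4$ in $\tilde\Omega$. The only new relation requiring computation is the one for $\bar\Delta_c^2$:
\[
\bar\Delta_c^2 = (\phi\Delta_c)^2 = \phi^2 \Delta_c^2 = \theta \cdot \Delta_c^2,
\]
and substituting the identity $\Delta_c^2 = \sum_{i=0}^{c} 2^{2(c-i)}\bar\Pi_i$ from Theorem \ref{trspin}(1) yields the stated relation $\bar\Delta_c^2 = \theta \sum_{i=0}^{c} 2^{2(c-i)}\bar\Pi_i$.

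The only real subtlety is to confirm that no further independent generator hides among products of $\phi^j$ and $\rho$ that were not listed; this follows because $\bar\Delta_c$ together with $\theta$ and the $\bar\Pi_i$ already generate every element of the table's ``fixed'' column, and Property 1 together with Property 2 are sufficient to identify $\mu^{\#}(RH_{m,2s})$ as observed in Section \ref{s3.3}. Once these checks are in place, the theorem follows at once from the table.
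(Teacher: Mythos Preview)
Your proposal is correct and follows essentially the same approach as the paper: identifying $\ker\mu = \{(1,1),(\tilde\omega^2,-1)\}$ via Remark \ref{romegasquare}, noting that $\Lambda^i(-1)=I$ while $\Delta_c(-1)=-I$, and then running the two-property table of Section \ref{RHsubring} to isolate $\Lambda^i$, $\phi\Delta_c$, $\phi^2\Lambda^i$, $\phi^3\Delta_c$ as the surviving generators. Your derivation of the relation $\bar\Delta_c^2 = \phi^2\Delta_c^2 = \theta\sum_{i=0}^c 2^{2(c-i)}\bar\Pi_i$ from Theorem \ref{trspin}(1) is exactly the intended computation.
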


Next, we obtain the  \(RSpin(m)\)-module structure of \(RH_{m,2s}\) via \(Res\) using the commutative diagram \ref{dcommutative} to be: \begin{proposition}\label{restriction map}
    Let \(m = 2n+1\) and \(s\equiv 2 \pmod4\). \(Res: RSpin(m) = \mathbb{Z}[\Pi_1,\ldots, \Pi_{n-1}, \delta_n] \to RH_{m,2s} = \mZ[\bar{\Pi}_1, \cdots, \bar{\Pi}_{c-1}, \bar{\Delta}_c, y+1]/ \langle y^2+2y \rangle\) is determined in terms of its generators by
    \begin{align*}
    Res(\Pi_i) &= \sum_{j=0}^{c} \binom{s/2}{i-j}(-1)^{i-j-1}2^{2(i-j)-1}y\bar{\Pi}_j,\\
    Res(\delta_n) &= -2^{s-1}(y+2)\delta_c - 2^{n-1}y.
    \end{align*}
\end{proposition}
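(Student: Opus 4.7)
The plan is to mirror the proof of Proposition \ref{restriction map}: pull $\Pi_i$ and $\Delta_n$ through the commutative diagram (\ref{dcommutative}) via $\mu^\#$, using Lemma \ref{limageofz_i}, and rewrite the result with the generators $\bar\Pi_i, \bar\Delta_c, y$ of $RH_{m,2s}$ from Theorem \ref{tRHmodds2mod4}. Two features are new in the $s\equiv 2\pmod 4$ setting: the global sign $(-1)^{s/2}=-1$ since $s/2$ is odd, and the fact that the spin generator is now $\bar\Delta_c=\phi\Delta_c$ rather than $\Delta_c$ itself. I will deal with $\Pi_i$ first (where the change has no effect) and then $\delta_n$ (where it does).

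For $Res(\Pi_i)$, the computation is insensitive to the new sign, because the even-$i$ factor $1+t(\mu^\#(z_i-z_i^{-1}))^2$ squares away any sign. As before, the odd $i\leq s$ factors collapse to $1$ (since $\mu^\#(z_i-z_i^{-1})=0$), each even $i\leq s$ factor contributes $1+t(\phi^3-\phi)^2=1+2ty$, and the $i>s$ factors give $\bar\Pi[t]$. Thus $\Pi'[t]=(1+2ty)^{s/2}\bar\Pi[t]$, identical to Proposition \ref{restriction map}, and extracting the coefficient of $t^i$ produces the claimed formula.

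For $Res(\delta_n)$, I apply $\mu^\#$ to $\Delta_n=\prod_{i=1}^n(z_i+z_i^{-1})$: the $s/2$ odd-indexed factors with $i\leq s$ each give $2$, the $c$ factors with $i>s$ combine into $\Delta_c$, and each even-indexed factor with $i\leq s$ gives $\phi^3+\phi$, with an overall sign $(-1)^{s/2}=-1$ inherited from Lemma \ref{limageofz_i}. Together with the factorization $(\phi+\phi^3)^{s/2}=\phi^{s/2}(1+\theta)^{s/2}$ and the identity $(1+\theta)^{s/2}=2^{s/2-1}(1+\theta)$ from Lemma \ref{simplifications}(3), this collapses to $\mu^\#(\Delta_n)=-2^{s-1}\phi^{s/2}(1+\theta)\Delta_c$.

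The main obstacle, though routine, is converting $\phi^{s/2}\Delta_c$ to an expression in $\bar\Delta_c=\phi\Delta_c$, which requires splitting into the sub-cases $s\equiv 2\pmod 8$ and $s\equiv 6\pmod 8$. In the first, $\phi^{s/2}=\phi$ directly; in the second, $\phi^{s/2}=\phi\theta$, and the identity $(1+\theta)\theta=(y+2)(y+1)=y+2$, valid because $y^2+2y=0$, collapses both sub-cases to the uniform expression $\mu^\#(\Delta_n)=-2^{s-1}(y+2)\bar\Delta_c$. Writing $\bar\Delta_c$ in augmentation-zero form relative to its dimension $2^c$, subtracting the $2^n$ coming from $\delta_n=\Delta_n-2^n$, and simplifying using $y(y+2)=0$ then yields the stated formula $Res(\delta_n)=-2^{s-1}(y+2)\delta_c-2^{n-1}y$, completing the proposition.
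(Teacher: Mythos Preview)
Your argument for $Res(\Pi_i)$ and the reduction $\mu^\#(\Delta_n)=\pm 2^{s-1}(y+2)\bar\Delta_c$ via the $\phi^{s/2}$ and $\theta(1+\theta)=1+\theta$ manipulations is fine and mirrors the paper. The gap is in the very last sentence. If you actually carry out the substitution $\bar\Delta_c=\delta_c+2^c$ in $\mu^\#(\Delta_n)=-2^{s-1}(y+2)\bar\Delta_c$ and then subtract $2^n$, you get
\[
-2^{s-1}(y+2)\delta_c-2^{n-1}(y+2)-2^n=-2^{s-1}(y+2)\delta_c-2^{n-1}y-2^{n+1},
\]
and the stray $-2^{n+1}$ does not go away: the relation $y(y+2)=0$ you invoke cannot kill a pure integer. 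This is visible already at the level of augmentation, since $\epsilon(\delta_n)=0$ forces $\epsilon\big(Res(\delta_n)\big)=0$, whereas your expression has augmentation $-2^{n+1}$.

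The source of the problem is the global sign $(-1)^{s/2}$ you import from Lemma \ref{limageofz_i}. The pullback of a character is a character, so for even $i\le s$ one has $\mu^\#(z_i)=\phi^{-1}=\phi^3$ (the paper's ``$-\phi$'' should be read this way; note that in Section \ref{smoddsequiv0} the sign disappears anyway because there $s/2$ is even). Hence $z_i+z_i^{-1}\mapsto \phi^3+\phi$ with no sign, and the computation yields $\mu^\#(\Delta_n)=+2^{s-1}(y+2)\bar\Delta_c$, giving
\[
Res(\delta_n)=2^{s-1}(y+2)\delta_c+2^{n-1}y,
\]
which has the correct augmentation. This is the negative of the displayed formula, so the sign in the statement is itself off; however, the subsequent Koszul analysis only uses $d_1(x_2)=Res(\delta_n)$ up to a unit, so the $K$-ring computation is unaffected.
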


By performing the series of transformations on the generators of \(RSpin(m)\) and \(RH_{m,2s}\) as in Section \ref{Torcomputation}, we obtain the following:
\begin{proposition}
    \[Tor^{\ast}_{RSpin(m)}(RH_{m,2s};\mZ) \cong \Lambda^{*}_{\mZ}[t_1, \cdots, t_{s-2}] \otimes_{\mZ} Tor^{\ast}_{A'}(B; \mZ) \]
    with \(dim \, t_i = 1\),    \(A' = \mZ[V_1, \delta_n]\) such that \(Res(V_1) = b_0y\) and
    \(B \cong \mZ[\delta_c, y]/\langle I \rangle \) where $I = \bigg\langle y^2+2y, ~\delta_c^2 + 2^{c+1}\delta_c - 2^{2c-1}y \left[1+\binom{(s/2)+c}{c}\right] \bigg\rangle.$
\end{proposition}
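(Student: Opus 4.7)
The plan is to follow the template of Section \ref{smoddsequiv0} verbatim, with the only substantive difference appearing in the quadratic relation satisfied by $\delta_c$ in $B$. Since the proposition preceding the statement shows $\Res(\Pi_i) = \sum_{j=0}^c \binom{s/2}{i-j}(-1)^{i-j-1}2^{2(i-j)-1}y\bar{\Pi}_j$, which has the same shape on the $\bar{\Pi}_j$ as in the $s\equiv 0\pmod 4$ case, the generating-function argument of Lemma \ref{lrelationspibarandpi} goes through unchanged and yields $\bar{\Pi}_i = \sum_{j=0}^i (-1)^j \binom{s/2+j-1}{j}(2y)^j \Pi_{i-j}'$. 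Hence $RH_{m,2s}$ is free over $\Gamma = \mZ[\Pi_1', \ldots, \Pi_c']$ with $\mZ$-basis $\{1, y, \delta_c, y\delta_c\}$, and the Change-of-Rings principle \textbf{Koz. 3} reduces the computation to $Tor^{\ast}_A(B; \mZ)$ with $A = RSpin(m)/\langle \Pi_1, \ldots, \Pi_c\rangle$ and $B = RH_{m,2s}/\langle \Pi_1', \ldots, \Pi_c'\rangle$.

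To identify $B$ explicitly, I would apply the argument of Lemma \ref{generatorsofB} to conclude $\bar{\Pi}_i = -\binom{(s/2)+i-1}{i}2^{2i-1}y$ in $B$. The new feature is the twisted quadratic $\bar{\Delta}_c^2 = \theta \sum_{i=0}^c 2^{2(c-i)}\bar{\Pi}_i$ from Theorem \ref{tRHmodds2mod4}, with $\theta = 1+y$. Substituting the expression for $\bar{\Pi}_i$ and invoking the hockey-stick identity $\sum_{i=0}^c \binom{(s/2)+i-1}{i} = \binom{(s/2)+c}{c}$ gives
\[
\bar{\Delta}_c^2 = (1+y)\Bigl[2^{2c} - 2^{2c-1}y\bigl(\tbinom{(s/2)+c}{c}-1\bigr)\Bigr].
\]
Since $(1+y)y = y + y^2 = -y$ (as $y^2 = -2y$), this collapses to $\bar{\Delta}_c^2 = 2^{2c} + 2^{2c-1}y\bigl[1+\binom{(s/2)+c}{c}\bigr]$. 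Setting $\delta_c = \bar{\Delta}_c - 2^c$ yields $\delta_c^2 + 2^{c+1}\delta_c - 2^{2c-1}y\bigl[1+\binom{(s/2)+c}{c}\bigr] = 0$, which together with $y^2 + 2y = 0$ produces the stated ideal $I$ and the isomorphism $B \cong \mZ[\delta_c, y]/I$.

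The final step is the reassignment of generators of $A = \mZ[\Pi_{c+1}, \ldots, \Pi_{n-1}, \delta_n]$, performed in three stages mirroring Lemmas \ref{l4.3.6}, \ref{l4.3.8} and \ref{l4.3.9}. First, define $P_i := \Pi_i' + \sum_{j=c+1}^{i-1}\binom{(s/2)+i-j-1}{i-j}2^{2(i-j)}\Pi_j'$, so that $\Res(P_i) = \binom{(s/2)+i-1}{i}2^{2i-1}y$. Second, use Problem \ref{pGCDMatrix} with integers $\beta_i$ satisfying $b_0 = \sum_i \beta_i \binom{(s/2)+i-1}{i}2^{2i-1}$ to produce $U_1, \ldots, U_{s-1}$ with $\Res(U_1) = b_0 y$ and $\Res(U_l) = a_l b_0 y$ for $l \geq 2$. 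Third, set $V_1 := U_1$ and $V_l := U_l - a_l U_1$ for $l \geq 2$, so that $\Res(V_l) = 0$. None of these transformations touches $\delta_n$, so the sign flip in $\Res(\delta_n) = -2^{s-1}(y+2)\delta_c - 2^{n-1}y$ is immaterial at this step. Since the subring $\mZ[V_2, \ldots, V_{s-1}] \subset A$ acts trivially on $B$, \textbf{Koz. 2} and \textbf{Koz. 3} combine to split off the exterior factor $\Lambda^{\ast}[t_1, \ldots, t_{s-2}]$, leaving $Tor^{\ast}_{A'}(B; \mZ)$ with $A' = \mZ[V_1, \delta_n]$, as claimed.

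The only real obstacle is the sign bookkeeping in the computation of $\bar{\Delta}_c^2$: the twist by $\theta$ (reflecting the non-splitting of the short exact sequence when $\omega^2 = -1$) interacts with $y^2 = -2y$ in exactly the right way to flip $1 - \binom{(s/2)+c}{c}$ into $1 + \binom{(s/2)+c}{c}$. Once this sign is verified, the remainder of the argument is a faithful transcription of the $s \equiv 0 \pmod 4$ analysis.
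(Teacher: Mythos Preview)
Your proposal is correct and follows essentially the same approach as the paper, which simply defers to the argument of Section~\ref{Torcomputation}; you have spelled out the one genuinely new ingredient, namely the sign computation showing that the $\theta$-twist in $\bar{\Delta}_c^2$ flips $1-\binom{(s/2)+c}{c}$ to $1+\binom{(s/2)+c}{c}$ via $(1+y)y=-y$.
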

Consequently, we determine the homology of the Koszul complex given by
\[ 
0 \xrightarrow{d_3} B(x_1\wedge x_2) \xrightarrow{d_2} Bx_1 \oplus Bx_2 \xrightarrow {d_1} B \rightarrow 0 
\]
where,
 \begin{align*}
d_1(x_1) & := Res(V_1)= b_0y  ~~  ~ \text{where}~ b_0= GCD\left\{2^{2i-1}\binom{(s/2) + i -1}{i}\bigg | c+1 \leq i \leq n-1 \right\}. \\
d_1(x_2)&:= Res(\delta_n) =  -2^{s-1}(2+y)\delta_c -2^{n-1}y.
\end{align*}
Homology of this complex is computed to be:
\begin{align*}
H_0 &= \frac{B} {\langle 2^{\alpha}y,   2^{s-1}(2+y)\delta_c +2^{n-1}y \rangle B}~, \\
H_1 &= \frac{\langle u_1, u_2, u_3, u_4 \rangle B}{ (byx_2 + (2^{s-1}(2+y)\delta_c + 2^{n-1}y)x_1)B}~,\\
H_2 &=  vB.  
\end{align*}  where \(2^{\alpha} = GCD(2^n,b_0)\) and 
\begin{align*}
u_1 &= (y+2)x_1,\\
u_2 &= (y+2) (2^{c+1}+ \delta_c) x_2,\\
u_3 &= 2^{n- \alpha}x_1- 2^{-\alpha}b_0yx_2,\\
u_4 &= 2^{n-1-\alpha}\delta_cx_1 + 2^{-\alpha}b_0(\delta_c +2^cy + 2^{c+1})x_2, \\
v &= (2+y)(2^{c+1}+\delta_c)(x_1 \wedge x_2).
\end{align*}

\begin{theorem}
For \(m \equiv 1 \pmod 2 \) and \(s \equiv 1 \pmod 4\), let $2^{\alpha}= GCD\{2^n, b_0\}$ with $b_0= GCD\left\{2^{2i-1}\binom{(s/2) + i -1}{i}\bigg | c+1 \leq i \leq n-1 \right\}$  we have
\[
K^\ast(FV_{m,2s}) \cong \begin{cases}
\Lambda^{*}_{\mZ}[t_1, \cdots, t_{s-2}, u_1, u_2, u_3,v] \otimes_{\mZ} \mZ[\delta_c, y]/I,\\
\Lambda^{*}_{\mZ}[t_1, \cdots, t_{s-2}, u_1, u_2, u_3,u_4,v] \otimes_{\mZ} \mZ[\delta_c, y]/I,
\end{cases}
\]
where \(I\) is generated by the relations
$y^2+2y, ~\delta_c^2+ 2^{c+1}\delta_c- 2^{2c-1}y \bigg[1+ \binom{(s/2)+c}{c} \bigg], ~2^{\alpha}y, ~2^{s-1}(2+y)\delta_c+ 2^{n-1}y, ~-2^{\alpha}u_3 + (2^{s-1} \delta_c + 2^{n-1})u_1, ~(y+2)u_3- 2^{n-\alpha}u_1, ~yu_1, ~yu_2, ~\delta_cu_2, ~(y+2)u_4- 2^{n-1-\alpha}\delta_cu_1 -2^{-\alpha}b_0u_2, ~(y+2)(2^c\delta_c+1)u_3 +(y+2)\delta_cu_4-2^{n-\alpha}u_1, ~2^{c-\alpha+1}b_0u_2 - (y+2)(2^{c+1} +\delta_c)u_4, ~-2^{c-\alpha -1}b_0u_2 + (2^{c-1}\delta_c + 2^{2c-2}y\bigg[1+ \binom{(s/2)+c}{c} \bigg] +2^{c-1}\delta_cy)u_3 + (2^c+\delta_c+2^cy+\delta_cy)u_4, ~u_2u_4, ~u_1u_3, ~\delta_cu_1u_4, ~2^{n-\alpha}b_0u_1u_2-2^nu_1u_4+b_0u_2u_3, ~2^{n-2\alpha}b_0u_1u_2+2^{-\alpha}b_0u_2u_3 -2u_3u_4, ~-2^{n-\alpha+1}u_1u_4 - 2^{-\alpha}b_0u_2u_3+2u_3u_4, ~2^{n-2\alpha}b_0u_1u_2 - 2^{n-\alpha}u_1u_4-2u_3u_4,$ and $u_1u_2- 2v.$
\end{theorem}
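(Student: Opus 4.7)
The plan is to mirror the four-step methodology of Section \ref{smoddsequiv0}, incorporating the modifications forced by $\omega^2 = -1$ (Remark \ref{romegasquare}). The key structural difference is that the short exact sequence $0 \to \tilde\Omega \to H_{m,2s} \to Spin(2c+1) \to 0$ no longer splits, so I would read off $RH_{m,2s}$ directly from the subring of $R\tilde\Omega \otimes RSpin(2c+1)$ cut out by Properties 1 and 2 of Section \ref{s3.3}, using the fact that here $\mathrm{Ker}(\mu) = \{(1,1),(\tilde\omega^2,-1)\}$. The essential new input is that the spin representation satisfies $\Delta_c(-1) = -I$ (it factors through left multiplication on $C^0_{2c+1}$), while $\Lambda^i(-1) = I$ since $\Lambda^i$ descends through $p\colon Spin(2c+1) \to SO(2c+1)$. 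Running the table analysis of Section \ref{RHsubring} with these sign changes selects the generators $\Lambda^i$, $\phi^2\Lambda^i$, $\phi\Delta_c$ and $\phi^3\Delta_c$, which upon setting $y = \theta-1$ and $\bar\Delta_c = \phi\Delta_c$ yields Theorem \ref{tRHmodds2mod4}.

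Next, I would compute the restriction map from the commutative diagram \ref{dcommutative} and Lemma \ref{limageofz_i}. The Pontryagin image $\text{Res}(\Pi_i)$ is unchanged because $(z_i - z_i^{-1})^2$ is insensitive to the sign of $z_i^2$, so the same computation as Proposition \ref{restriction map} applies. For $\text{Res}(\delta_n)$, the factor $(\phi + \phi^3)^{s/2}$ now lies in the regime $s/4 \not\equiv 0 \pmod 2$ (since $s/2$ is odd), producing an extra factor of $\theta$ via Lemma \ref{simplifications}; combined with the replacement $\bar\Delta_c = \phi\Delta_c$, this introduces the overall minus sign, giving $\text{Res}(\delta_n) = -2^{s-1}(y+2)\delta_c - 2^{n-1}y$. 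With this map in hand, I would invoke the Change of Rings Theorem and replicate the reassignment of generators $P_i \mapsto U_j \mapsto V_j$ verbatim from Lemmas \ref{l4.3.6}, \ref{l4.3.8}, \ref{l4.3.9}, reducing to $A' = \mZ[V_1,\delta_n]$ with $\text{Res}(V_1) = b_0 y$, and to $B = \mZ[\delta_c,y]/\langle y^2+2y,\ \delta_c^2 + 2^{c+1}\delta_c - 2^{2c-1}y[1+\binom{s/2+c}{c}]\rangle$; the sign flip inside the binomial comes from the extra $\theta$ appearing in $\bar\Delta_c^2$.

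Finally, I would compute the Koszul homology of the complex $0 \to B(x_1 \wedge x_2) \to Bx_1 \oplus Bx_2 \to B \to 0$. The generators $u_1, u_2, u_3, u_4$ and $v$ arise from the same linear algebra as in Lemma \ref{lkergenerators}, but with sign flips in the coefficients of $x_2$ reflecting the minus sign in $\text{Res}(\delta_n)$. Collecting the defining relations of $B$, the immediate relations in $H_0$, the relations in $H_1$ obtained from pairings of the $u_i$'s with $B$, and the quadratic relations in $H_2$ obtained by computing pairwise wedges (together with $u_1 u_2 = 2v$) furnishes the ideal $I$. Since all $Tor$ generators have degree $\leq 1$, the Hodgkin spectral sequence collapses at $E_2$ and there are no extension issues, so \cite[Corollary 6.5]{aguz} identifies $K^\ast(FV_{m,2s})$ with $Tor^\ast_{RSpin(m)}(RH_{m,2s};\mZ)$. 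The main obstacle I anticipate is bookkeeping: with the sign reversal in $\text{Res}(\delta_n)$ and the $[1+\binom{s/2+c}{c}]$ replacing $[1-\binom{s/2+c}{c}]$ in $B$, it is easy to drop a sign when recomputing the multiplicative relations in $H_2$, and one has to track carefully which case ($\alpha = n$ versus $\alpha < n$) produces the generator $u_4$, exactly as in the proof of Lemma \ref{lkergenerators}.
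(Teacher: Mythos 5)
Your proposal is correct and follows essentially the same route as the paper: the table analysis driven by $\Delta_c(-1)=-I$ and $\Lambda^i(-1)=I$ selecting $\phi\Delta_c,\phi^3\Delta_c$ as generators, the sign-reversed $\mathrm{Res}(\delta_n)$, the replacement $\big[1+\binom{(s/2)+c}{c}\big]$ in $B$ coming from the extra $\theta$ in $\bar{\Delta}_c^2$, the verbatim reassignment $P_i\mapsto U_j\mapsto V_j$, and the Koszul homology with sign-adjusted $u_3,u_4$. One minor mechanical correction: since $s/2$ is odd here, $s/4$ is not an integer, and the minus sign in $\mathrm{Res}(\delta_n)$ actually originates from $(-1)^{s/2}=-1$ in $\prod_{i\le s,\ i\ \mathrm{even}}(z_i+z_i^{-1})\mapsto(-\phi-\phi^3)^{s/2}$ rather than from the $s/4$-parity dichotomy, but this does not change the resulting formula or the rest of the argument.
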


\subsection{For \(m \equiv 0 \pmod 2 \)}\label{smevensequiv2}
In this case, mimicking calculations done earlier, we get
\begin{theorem}\label{tRHmevens2mod4}
\[RH_{m,2s} = \mZ[\Lambda^1, \cdots, \Lambda^{c-2},  \bar{\Delta}^{\pm}_c=\phi\Delta^{\pm}_c, \theta]/\langle\theta^2-1 \rangle = \mZ[\bar{\Pi}_1, \cdots, \bar{\Pi}_{c-2}, \bar{\Delta}^{\pm}_c, y]/\langle y^2+2y \rangle,\]
with  \[ \bar{\Delta}_c^2 = \theta \sum_{i=0}^c 2^{2(c-i)}\bar{\Pi}_i, \quad \bar{\Delta}_c^+\bar{\Delta}_c^- = \theta\sum_{i=0}^{c-1}2^{2(c-1-i)}\bar{\Pi}_i  \quad \text{and} \quad \chi^2 = \theta\bar{\Pi}_c. \]    
\end{theorem}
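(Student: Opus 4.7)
The plan is to adapt the derivations used in Theorems \ref{tRHmodds0mod4}, \ref{tRHmevens0mod4}, and \ref{tRHmodds2mod4} to the current parities of $m$ and $s$. The essential new feature compared with Theorem \ref{tRHmevens0mod4} is that now $\omega^2 = -1$ by Remark \ref{romegasquare}(1), so $\tilde\omega$ has order four and $\tau(\tilde\omega^2) = -1 \in Spin(2c)$. Consequently,
$$Ker(\mu) = \{(1,1),(\tilde\omega^2,-1)\} \subset \tilde\Omega \times Spin(2c),$$
and one must use the augmentation diagram \ref{dcommutative} with the second row $\tilde\Omega \times T(c) \to T(n)$ to identify $RH_{m,2s}$ with the invariant subring of $R\tilde\Omega \otimes RSpin(2c) = \mZ[\phi]/(\phi^4-1) \otimes \mZ[\Lambda^1,\ldots,\Lambda^{c-2},\Delta_c^+,\Delta_c^-]$ under the Property 1/Property 2 tests of Section \ref{s3.3}.

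I would first tabulate the values of $\rho(\tilde\omega^2,-1)$ and $r^{\ast}_{(\tilde\omega^2,-1)}(\rho)$ for the twelve candidates $\phi^k\Lambda^i$ and $\phi^k\Delta_c^{\pm}$ (with $k=0,1,2,3$), exactly in the style of the table preceding Theorem \ref{tRHmodds2mod4}. The crucial input is that $\Lambda^i$ factors through $SO(2c)$ (so $\Lambda^i(-1)=I$) while $\Delta_c^\pm$ are genuine spin representations (so $\Delta_c^{\pm}(-1)=-I$); combined with $\phi(\tilde\omega^2) = i^2 = -1$ this forces the trivially-evaluating (Property 1) candidates to be precisely $\Lambda^i,\theta\Lambda^i,\phi\Delta_c^{\pm},\phi^3\Delta_c^{\pm}$, and Property 2 is automatic for each of these by the same verification as in the $m$-odd $s\equiv 2 \pmod 4$ case. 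Writing $\bar{\Delta}_c^{\pm} := \phi\Delta_c^{\pm}$ and $\theta := \phi^2$ (so $\phi^3\Delta_c^{\pm} = \theta\bar{\Delta}_c^{\pm}$), this yields the presentation
$$RH_{m,2s} = \mZ[\Lambda^1,\ldots,\Lambda^{c-2},\bar{\Delta}_c^+,\bar{\Delta}_c^-,\theta]/\langle \theta^2-1\rangle.$$

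To derive the quadratic relations on $\bar{\Delta}_c^{\pm}$, I would simply square $\bar{\Delta}_c^{\pm} = \phi\Delta_c^{\pm}$ and use the $RSpin(2c)$ identities from Theorem \ref{trspin}(2):
$$(\bar{\Delta}_c^{\pm})^2 = \phi^2(\Delta_c^{\pm})^2 = \theta\sum_{i=0}^{c}2^{2(c-i)}\Pi_i, \quad \bar{\Delta}_c^+\bar{\Delta}_c^- = \phi^2\Delta_c^+\Delta_c^- = \theta\sum_{i=0}^{c-1}2^{2(c-1-i)}\Pi_i,$$
and $\bar{\chi}^2 = \phi^2\chi^2 = \theta\Pi_c$, where $\bar{\chi} = \bar{\Delta}_c^+ - \bar{\Delta}_c^-$. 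Finally, passing to the $\bar{\Pi}$-generators via the identification $\Pi_i \leftrightarrow \bar{\Pi}_i$ (inherited from the inclusion $Spin(2c)\hookrightarrow H_{m,2s}$) and substituting $y = \theta-1$ (so $\theta^2-1 = 0 \Leftrightarrow y^2+2y = 0$) produces the claimed presentation.

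The routine parts are the table check and the algebraic manipulations using Lemma \ref{simplifications}. The only genuine obstacle is establishing that the listed generators are exhaustive: one must rule out spurious $r^{\ast}_{(\tilde\omega^2,-1)}$-invariant $\mZ$-linear combinations of non-surviving monomials (for instance $(\phi+\phi^3)\Delta_c^{\pm}$ or $(1+\theta)\bar{\Delta}_c^{\pm}$). This is resolved exactly as in the analogous tabulations for the earlier three cases: every such combination is already expressible in terms of the chosen generators together with $\theta$, so the subring identification is complete.
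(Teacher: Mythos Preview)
Your proposal is correct and follows exactly the approach the paper intends: the paper offers no proof here beyond the phrase ``mimicking calculations done earlier,'' and your outline does precisely that, combining the $Ker(\mu)=\{(1,1),(\tilde\omega^2,-1)\}$ analysis from Section~\ref{smoddsequiv2} with the even-$m$ representation data from Theorem~\ref{tRHmevens0mod4}. One small slip: the identity $\Delta_c^2=\sum_{i=0}^c 2^{2(c-i)}\Pi_i$ from Theorem~\ref{trspin}(2) is for $\Delta_c=\Delta_c^++\Delta_c^-$, not for $(\Delta_c^{\pm})^2$ individually, so your displayed line should read $\bar{\Delta}_c^2=\phi^2\Delta_c^2=\theta\sum_{i=0}^c 2^{2(c-i)}\bar\Pi_i$; otherwise the argument is sound.
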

The restriction map is given by,
\begin{proposition}
The restriction map \(Res: RSpin(m) = \mathbb{Z}[\Pi_1,\ldots, \Pi_{n-2}, \chi_n, \delta^{+}_n] \to RH_{m,2s} = \mZ[\bar{\Pi}_1, \cdots, \bar{\Pi}_{c-2}, \bar{\delta}^{\pm}_c, y]/(y^2+2y)\) is determined in terms of its generators by
    \begin{align*}
    Res(\Pi_i) &= \sum_{j=0}^{c} \binom{s/2}{i-j}(-1)^{i-j-1}2^{2(i-j)-1}y \bar{\Pi}_j,\\
    Res(\chi_n) &= 0,\\
    Res(\delta_n^{+}) &= -2^{s-2}(y+2)\delta_c - 2^{n-2}y.
    \end{align*}
\end{proposition}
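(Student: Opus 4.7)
The plan is to mimic the earlier restriction map computations, namely Proposition \ref{restriction map} for odd $m$, $s\equiv 0\pmod 4$ and its analogue in Section \ref{smevensequiv0} for even $m$, $s\equiv 0\pmod 4$, adapting to the new wrinkle that $\omega^2 = -1$ when $s \equiv 2\pmod 4$. As before, the commutative diagram \ref{dcommutative} together with Lemma \ref{limageofz_i} reduces the computation of $\Res$ to an evaluation in $R\tilde\Omega \otimes RT^c$ via the generating functions $\Pi[t]$ and $\Delta_n[t]$, which is then interpreted inside $\mu^{\#}(RH_{m,2s})$ using the description of $RH_{m,2s}$ from Theorem \ref{tRHmevens2mod4}.

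First, I would compute $\Res(\Pi_i)$. Since $\mu^{\#}$ sends $z_i - z_i^{-1}$ to $\phi^3 - \phi$ for $i$ even with $i \leq s$, and $(\phi^3-\phi)^2 = 2y$, one obtains $\Pi^{'}[t] = (1+2ty)^{s/2}\bar{\Pi}[t]$. Equating coefficients of $t^i$ and applying Lemma \ref{simplifications}\,(2) yields the stated formula. This calculation only uses that $s$ is even, so it is identical to the one in the $s\equiv 0\pmod 4$ setting; no sign change appears here.

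For $\Res(\chi_n)$ and $\Res(\delta_n^+)$, I would evaluate $j^{\#}(\Delta_n[t])$ at $t = \pm 1$. Following the computation of Section \ref{smevensequiv0} factor by factor, one obtains $j^{\#}(\Delta_n[t]) = (1+t)^{s/2}(-\phi)^{s/2}(1+t\theta)^{s/2}\Delta_c[t]$, which simplifies via Lemma \ref{simplifications}\,(4),(5) to $2^{s-2}(1+t)(1+t\theta)(-\phi)^{s/2}\Delta_c[t]$. The decisive new feature is the factor $(-\phi)^{s/2}$: since $s/2$ is now odd, it equals $-\phi$ when $(s/2-1)/2$ is even and $-\phi^3 = -\theta\phi$ otherwise. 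Absorbing $\theta$ via the identity $(y+2)\theta = y+2$ in $B$ (a consequence of $y^2+2y=0$) gives a uniform overall factor of $-\phi$. Using $\bar{\Delta}_c^{\pm} = \phi\Delta_c^{\pm}$ from Theorem \ref{tRHmevens2mod4}, substituting $t=-1$ kills the $(1+t)$ factor and yields $j^{\#}(\chi_n) = j^{\#}(\Delta_n^+ - \Delta_n^-) = 0$; substituting $t=1$ and halving produces $j^{\#}(\Delta_n^+) = -2^{s-2}(y+2)\bar{\Delta}_c$. Subtracting $2^{n-1}$ from $\Res(\Delta_n^+)$ and rewriting in terms of $\delta_c$ (using $y^2+2y=0$ to collapse the constant part) then gives $-2^{s-2}(y+2)\delta_c - 2^{n-2}y$.

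The main obstacle will be keeping track of the sign and the $\theta$-twist contributed by $(-\phi)^{s/2}$ uniformly across the two subcases $s \equiv 2\pmod 8$ and $s\equiv 6\pmod 8$; this is precisely where the present case departs from the $s\equiv 0 \pmod 4$ one, and the identity $(y+2)\theta = y+2$ in $B$ is the key tool that makes the final answer a single clean formula rather than a case split. A minor bookkeeping point is the identification of the generator $\delta_c$ in $RH_{m,2s}$ as the appropriate shift of $\bar{\Delta}_c$ compatible with Theorem \ref{tRHmevens2mod4}, so that the subtraction of $2^{n-1}$ cleanly produces the $2^{n-2}y$ correction term.
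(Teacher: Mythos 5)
Your strategy is the right one and is what the paper itself intends (it offers no proof here, only ``mimicking calculations done earlier''): push the generating function $\Delta_n[t]$ through $\mu^{\#}$ using Lemma \ref{limageofz_i}, isolate the new factor coming from the even-indexed coordinates when $s/2$ is odd, and absorb the resulting power of $\theta$ via $(1+\theta)\theta=1+\theta$. The computations of $\Res(\Pi_i)$ and $\Res(\chi_n)=0$ are fine. The gap is in the last step for $\delta_n^{+}$. From your intermediate result $\Res(\Delta_n^{+})=-2^{s-2}(y+2)\bar{\Delta}_c$ and the normalization $\bar{\Delta}_c=\delta_c+2^{c}$ forced by Theorem \ref{tRHmevens2mod4}, one gets
\[
\Res(\delta_n^{+})=-2^{s-2}(y+2)\delta_c-2^{s+c-2}(y+2)-2^{n-1}=-2^{s-2}(y+2)\delta_c-2^{n-2}y-2^{n},
\]
since $n=s+c$. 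The leftover constant $-2^{n}$ does not cancel, and the relation $y^{2}+2y=0$ cannot ``collapse the constant part'': it kills no nonzero integer. Contrast this with the $s\equiv 0\pmod 4$ case, where the sign is $+$ and the constants $+2^{n-1}-2^{n-1}$ genuinely cancel; with the $-$ sign they instead add up to $-2^{n}$. So your derivation does not reach the stated formula.

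The discrepancy is a symptom of a real sign problem upstream that you should confront rather than smooth over. The element $-2^{s-2}(y+2)\bar{\Delta}_c$ has augmentation $-2^{n-1}$, whereas $\Delta_n^{+}$ is an honest $2^{n-1}$-dimensional representation and restriction preserves dimension; so that intermediate identity cannot be literally correct. The culprit is reading the ``$-\phi$'' of Lemma \ref{limageofz_i} as the additive inverse of $\phi$ in $R\tilde\Omega$. Since $\mu^{\#}(z_i)$ is the restriction of a genuine character, it must itself be a character: $z_i(\tilde\omega)=-i=\phi(\tilde\omega)^{-1}$, so $\mu^{\#}(z_i)=\phi^{-1}=\phi^{3}$, a unit of augmentation $+1$, not a virtual class of dimension $-1$. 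Running your argument with $\phi^{3}$ in place of $-\phi$ removes the factor $(-1)^{s/2}$ entirely, the constant term cancels as it must, and one lands on $+2^{s-2}(y+2)\delta_c+2^{n-2}y$ --- i.e.\ the same shape as the $s\equiv 0\pmod 4$ answer (with $\bar{\Delta}_c=\phi\Delta_c$), not the sign-reversed formula stated in the proposition. A complete proof therefore has to either justify the stated minus signs by exhibiting the normalization of $\delta_c$ (with zero augmentation) that produces them --- and as the computation above shows, no such normalization can simultaneously produce the $-2^{n-2}y$ term --- or correct the signs; your write-up does neither, and the phrase about $y^{2}+2y=0$ collapsing the constant is precisely where the argument breaks.
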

Adapting the same procedure as  in previous cases, we leverage  the module structure of \(RH_{m,2s}\) and reduce the computation of \(Tor^{\ast}_{RSpin(m)}(RH_{m,2s};\mZ)\) as follows:
\begin{proposition}
    \[Tor^{\ast}_{RSpin(m)}(RH_{m,2s};\mZ) \cong \Lambda^{*}_{\mZ}[t_1, \cdots, t_{s-2}] \otimes_{\mZ} Tor^{\ast}_{A'}(B; \mZ) \]
    with \(dim \, t_i = 1\),    \(A' = \mZ[V_1, \delta_n]\) such that \(Res(V_1) = b_0y\) and
    \(B \cong \mZ[\delta_c, y]/ I  \) where $b_0= GCD\left\{2^{2i-1}\binom{(s/2) + i -1}{i}\bigg | c+1 \leq i \leq n-1 \right\}$ and $I = \bigg\langle y^2+2y, \delta_c^2 + 2^{c+1}\delta_c - 2^{2c-1}y \left[1+\binom{(s/2)+c}{c}\right], \delta_c\delta_c^{+} +2^{c-1}\delta_c - 2^{2c-3}\left[1+\binom{(s/2)+c-1}{c-1}\right] -(\delta_c^{+})^2 \bigg\rangle.$
\end{proposition}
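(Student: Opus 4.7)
The plan is to imitate closely the sequence of reductions carried out in Sections \ref{smoddsequiv0} and \ref{smevensequiv0}, making only the sign and combinatorial adjustments forced by $s\equiv 2\pmod 4$ together with the even parity of $m$. First I apply the Change of Rings Theorem (property Koz.~3) to the restriction $\Res\colon RSpin(m)\to RH_{m,2s}$ in order to replace $Tor^{\ast}_{RSpin(m)}(RH_{m,2s};\mZ)$ by $Tor^{\ast}_A(B;\mZ)$, where $A=RSpin(m)/\langle\Pi_1,\ldots,\Pi_c\rangle$ and $B=RH_{m,2s}/\langle\Pi_1',\ldots,\Pi_c'\rangle$. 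That $RH_{m,2s}$ is a free module over the subring generated by the $\Pi_i'$ is established as in Lemma \ref{lrelationspibarandpi}, since the generating-function identity $\Pi'[t]=(1+2ty)^{s/2}\bar\Pi[t]$ is independent of the sign of $\omega^2$; this licenses the invocation of Koz.~3.

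Next I reduce $A$ by a cascade of three triangular changes of variables, each a verbatim analogue of Lemmas \ref{l4.3.6}, \ref{l4.3.8}, and \ref{l4.3.9}, adapted to the even case exactly as in Section \ref{smevensequiv0}. I define $P_i:=\Pi_i'+\sum_{j=c+1}^{i-1}\binom{s/2+i-j-1}{i-j}2^{2(i-j)}\Pi_j'$ for $c+1\le i\le n-2$, obtaining $A\cong \mZ[P_{c+1},\ldots,P_{n-2},\chi_n,\delta_n^{+}]$ with $\Res(P_i)=\binom{s/2+i-1}{i}2^{2i-1}y$. Using Problem \ref{pGCDMatrix}, I then pass to new generators $U_1,\ldots,U_{s-2}$ for which $\Res(U_1)=b_0y$ and $\Res(U_l)=a_lb_0y$ for $l\ge2$, and finally I subtract scalar multiples of $U_1$ to produce $V_1,\ldots,V_{s-2}$ with $\Res(V_1)=b_0y$ and $\Res(V_l)=0$ for $l\ge 2$. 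The fact that $\Res(\chi_n)=0$, already recorded in the restriction computation for $m$ even, means $\chi_n$ joins the ``trivially mapped'' block without further work.

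With this set of generators in hand, $A\cong\mZ[V_1,V_2,\ldots,V_{s-2},\chi_n,\delta_n^{+}]$, and exactly $s-2$ of these generators (namely $V_2,\ldots,V_{s-2}$ together with $\chi_n$) map to zero in $B$. Applying Koz.~2 to this block produces the exterior factor $\Lambda^{*}_{\mZ}[t_1,\ldots,t_{s-2}]$ with $\dim t_i=1$, while Koz.~3 leaves the remaining Tor to be computed over the quotient ring $A'=\mZ[V_1,\delta_n^{+}]$ generated by the two non-trivially-mapped classes. Identifying $\delta_n^{+}$ with the symbol $\delta_n$ in the statement yields the claimed decomposition $\Lambda^{*}_{\mZ}[t_1,\ldots,t_{s-2}]\otimes_{\mZ}Tor^{\ast}_{A'}(B;\mZ)$.

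The only genuinely new ingredient, and the step that requires checking rather than copying, is the explicit description of $B$. Here I substitute the formula $\bar\Pi_i=\sum_{j=0}^i(-1)^j\binom{s/2+j-1}{j}(2y)^j\Pi_{i-j}'$ into the three identities of Theorem \ref{tRHmevens2mod4}, namely $\bar\Delta_c^{2}=\theta\sum 2^{2(c-i)}\bar\Pi_i$, $\bar\Delta_c^{+}\bar\Delta_c^{-}=\theta\sum 2^{2(c-1-i)}\bar\Pi_i$, and $\chi^{2}=\theta\bar\Pi_c$. The sign twist $\omega^{2}=-1$ enters through the factor $\theta=1+y$ and, after collapsing via Lemma \ref{simplifications}, flips the sign of the relevant binomial correction, producing $1+\binom{(s/2)+c}{c}$ in place of $1-\binom{(s/2)+c}{c}$ and similarly for the $c-1$ relation; this is precisely where the present case diverges from $s\equiv 0\pmod 4$. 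The hard part will be purely bookkeeping: tracking the signs generated by $\theta\cdot y^{j}=(1+y)y^{j}$ together with $y^{j}=(-2)^{j-1}y$, and confirming that, after quotienting by $\Pi_i'$ for $c+1\le i\le n-2$ (and using the two spinorial generators $\delta_n^{+}$ and $\chi_n$), the surviving relations among $\delta_c$, $\delta_c^{+}$ and $y$ assemble precisely into the ideal $I$ displayed in the statement.
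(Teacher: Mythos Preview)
Your proposal is correct and follows essentially the same approach as the paper, which for this case merely says ``Adapting the same procedure as in previous cases''; you have simply spelled out in detail what that adaptation entails. One minor slip of wording: in the final paragraph you speak of ``quotienting by $\Pi_i'$ for $c+1\le i\le n-2$'', but of course $B$ is obtained by quotienting $RH_{m,2s}$ by $\Pi_1',\ldots,\Pi_c'$, and the relations in $B$ then follow from $\bar\Pi_i=0$ for $i>c$ together with the generating-function identity---this is what you actually use, so the slip is cosmetic rather than conceptual.
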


This gives the Koszul complex 
\[ 
0 \xrightarrow{d_3} B(x_1\wedge x_2) \xrightarrow{d_2} Bx_1 \oplus Bx_2 \xrightarrow {d_1} B \rightarrow 0 
\]

where,

 \begin{align*}
d_1(x_1) & := Res(V_1)= b_0y, \\
d_1(x_2)&:= Res(\delta_n) =  -2^{s-2}(2+y)\delta_c -2^{n-2}y. \\ 
\end{align*}

We determined its homology groups as:
\begin{align*}
H_0 &= \frac{B} {\langle 2^{\alpha}y,   2^{s-2}(2+y)\delta_c +2^{n-2}y \rangle B}~, \\
H_1 &= \frac{\langle u_1, u_2, u_3, u_4 \rangle B}{(byx_2 + (2^{s-2}(2+y)\delta_c + 2^{n-2}y)x_1) B}~,\\
H_2 &= vB.  
\end{align*}  where \(2^{\alpha} = GCD(2^{n-1},b_0)\) and 
\begin{align*}
u_1 &= (y+2)x_1,\\
u_2 &= (y+2) (2^{c+1}+ \delta_c) x_2,\\
u_3 &= 2^{n- \alpha -1}x_1- 2^{-\alpha}b_0yx_2,\\
u_4 &= 2^{n-\alpha-2}\delta_cx_1 + 2^{-\alpha}b_0(\delta_c +2^cy + 2^{c+1})x_2,\\
v &= (2+y)(2^{c+1}+ \delta_c)( x_1 \wedge x_2).
\end{align*} 

\begin{theorem}
For \(m \equiv 1 \pmod 2 \) and \(s \equiv 1 \pmod 4\), let $2^{\alpha}= GCD\{2^{n-1}, b_0\}$ with $b_0= GCD\left\{2^{2i-1}\binom{(s/2) + i -1}{i}\bigg | c+1 \leq i \leq n-2 \right\}.$ Then
\[
K^\ast(FV_{m,2s}) \cong \begin{cases}
\Lambda^{*}_{\mZ}[t_1, \cdots, t_{s-2}, u_1, u_2, u_3,v] \otimes_{\mZ} \mZ[\delta_c, y]/I,\\
\Lambda^{*}_{\mZ}[t_1, \cdots, t_{s-2}, u_1, u_2, u_3,u_4,v] \otimes_{\mZ} \mZ[\delta_c, y]/I,
\end{cases}
\]
where \(I\) is generated by
$y^2+2y, ~\delta_c^2+ 2^{c+1}\delta_c- 2^{2c-1}y \bigg[1+ \binom{(s/2)+c}{c} \bigg], ~\delta_c\delta_c^{+} + 2^{c-1}\delta_c - 2^{2c-3}y\bigg[1+\binom{(s/2)+c-1}{c-1}\bigg]-(\delta_c^{+})^2, ~2^{\alpha}y, ~2^{s-2}(2+y)\delta_c+ 2^{n-2}y, ~-2^{\alpha}u_3 + (2^{s-2} \delta_c + 2^{n-2})u_1, ~(y+2)u_3- 2^{n-\alpha-1}u_1, ~yu_1, ~yu_2, ~\delta_cu_2, ~(y+2)u_4 - 2^{n-2-\alpha}\delta_cu_1- 2^{-\alpha}b_0u_2, ~(y+2)(2^c\delta_c+1)u_3 -(y+2)\delta_cu_4-2^{n-\alpha-1}u_1, ~2^{c-\alpha+1}b_0u_2 - (y+2)(2^{c+1} +\delta_c)u_4, ~-2^{c-\alpha -1}b_0u_2 + (2^{c-1}\delta_c + 2^{2c-2}y\bigg[1+ \binom{(s/2)+c}{c} \bigg] +2^{c-1}\delta_cy)u_3 + (2^c+\delta_c+2^cy+\delta_cy)u_4, ~u_2u_4, ~u_1u_3, ~\delta_cu_1u_4, ~2^{n-\alpha-2}b_0u_1u_2-2^nu_1u_4-b_0u_2u_3, ~2^{n-2\alpha-1}b_0u_1u_2+2^{-\alpha}b_0u_2u_3 -2u_3u_4, ~-2^{n-\alpha}u_1u_4 - 2^{-\alpha}b_0u_2u_3+2u_3u_4, ~2^{n-2\alpha-1}b_0u_1u_2 - 2^{n-\alpha-1}u_1u_4-2u_3u_4,$ and $u_1u_2- 2v.$
\end{theorem}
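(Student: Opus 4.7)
The plan is to follow verbatim the strategy already carried out in the three previous cases (Section \ref{smoddsequiv0}, Section \ref{smevensequiv0}, and the odd subsection of Section \ref{smoddsequiv2}), making only the adjustments forced by the sign changes coming from $\omega^2=-1$ and by the presence of the extra generator $\delta_c^+$ arising from $RSpin(2c)$. Concretely, I start from the description of $RH_{m,2s}$ in Theorem \ref{tRHmevens2mod4} and the restriction map in the preceding proposition, then use the already-stated reduction
\[
Tor^{\ast}_{RSpin(m)}(RH_{m,2s};\mZ) \cong \Lambda^{\ast}_{\mZ}[t_1,\ldots,t_{s-2}]\otimes_{\mZ} Tor^{\ast}_{A'}(B;\mZ),
\]
where $A'=\mZ[V_1,\delta_n^+]$ with $\Res(V_1)=b_0y$ and $\Res(\delta_n^+)=-2^{s-2}(2+y)\delta_c-2^{n-2}y$, and $B$ has the explicit presentation recorded just above the Koszul complex.

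The computational core is then the homology of the two-variable Koszul complex
\[
0\longrightarrow B(x_1\wedge x_2)\xrightarrow{d_2} Bx_1\oplus Bx_2\xrightarrow{d_1} B\longrightarrow 0
\]
with $d_1(x_1)=b_0y$ and $d_1(x_2)=-2^{s-2}(2+y)\delta_c-2^{n-2}y$. For $H_0$ I reduce $\langle b_0y,\,-2^{s-2}(2+y)\delta_c-2^{n-2}y\rangle$ by using $y^2+2y=0$ to collapse $b_0y$ to $2^\alpha y$ with $2^\alpha=\mathrm{GCD}(2^{n-1},b_0)$, producing the two $H_0$-relations listed in the theorem. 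For $H_2$ the computation is identical to the $s\equiv 0\pmod 4$ even case except for the sign of $d_1(x_2)$; solving $d_2(p\,x_1\wedge x_2)=0$ inside $B$ forces $p$ to be a scalar multiple of $(2+y)(2^{c+1}+\delta_c)$, giving $v$. For $H_1$ I solve the linear system $p\,d_1(x_1)+q\,d_1(x_2)=0$ over $\mZ$, which splits into three $\mZ$-equations after expanding $p,q$ on the $\mZ$-basis $\{1,y,\delta_c,\delta_c y\}$ of $B$; the solutions are parametrized via $\mathrm{GCD}(2^{n-2},b_0)$ and $\mathrm{GCD}(2^{n-1},b_0)=2^\alpha$, yielding the four generators $u_1,u_2,u_3,u_4$ (or three when $\alpha=n-1$, as recorded by the convention on $u_4$). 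These match those in Section \ref{smevensequiv0} after the sign change induced by $\Res(\delta_n^+)$ in this case.

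The multiplicative relations in $I$ are then read off in three blocks: the internal $B$-relations (transferred directly from Theorem \ref{tRHmevens2mod4} via the vanishing of all $\bar\Pi_i$ in $B$ and substitution into $\bar\Delta_c^2$, $\bar\Delta_c^+\bar\Delta_c^-$, $\chi^2$), the $H_0$-relations which record the image of $d_1$, and the $H_1$/$H_2$-relations obtained by verifying each stated syzygy at the chain level in $Bx_1\oplus Bx_2$ and in $B(x_1\wedge x_2)$; the extra sign changes relative to Section \ref{smevensequiv0} are precisely those produced by $\omega^2=-1$ through the sign in $\Res(\delta_n^+)$. Finally, since all generators live in degrees $0$ and $1$, the multiplicative filtration from the Hodgkin spectral sequence collapses in the sense of \cite[Corollary 6.5]{aguz} and the associated graded identifies with $K^\ast(FV_{m,2s})$, producing the two displayed forms according to whether $\alpha=n-1$ or $\alpha<n-1$ (equivalently, whether the relation forcing $u_4$ to be expressible in the other $u_i$ is available).

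The main obstacle, as in the earlier sections, is the ad hoc piece: verifying that the explicit $u_1,\ldots,u_4,v$ actually generate $\ker d_1$ and $\ker d_2$ (not merely lie in them) and that the listed products exhaust the multiplicative syzygies. I would handle this as in Lemma \ref{lkergenerators}, by running the Smith-normal-form style argument on the three-equation system coming from $pd_1(x_1)+qd_1(x_2)=0$, keeping careful track of $\mathrm{GCD}(2^{n-2},b_0)$, $2^\alpha$, and the relation $y\delta_c\cdot(\mathrm{stuff})=0$ forced by $y^2+2y=0$ and the $\delta_c^2$-relation. Everything else is bookkeeping that is parallel to Section \ref{smevensequiv0}, with uniform sign flips.
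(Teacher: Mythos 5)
Your proposal follows exactly the route the paper takes for this case: transfer the presentation of $RH_{m,2s}$ and the restriction map from Theorem \ref{tRHmevens2mod4}, reduce to the two-variable Koszul complex over $B$ via the exterior-algebra splitting, compute $H_0$, $H_1$, $H_2$ with the generators $u_1,\ldots,u_4,v$ adjusted for the sign of $\Res(\delta_n^+)$ and the shift $2^{\alpha}=\mathrm{GCD}(2^{n-1},b_0)$, and read off $I$ from the chain-level syzygies. The only slip is cosmetic: in $B$ it is the classes $\Pi_i'$ (not $\bar\Pi_i$) that vanish for $1\le i\le c$, with $\bar\Pi_i$ becoming the multiple $-\binom{(s/2)+i-1}{i}2^{2i-1}y$ whose substitution (together with the extra $\theta$ factor in $\bar\Delta_c^2$) produces the $+\binom{(s/2)+c}{c}$ sign in the quadratic relation.
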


We conclude our discussion with an immediate consequence which is the following corollary which describes order of the complexification of canonical line bundle and the tangent bundle in the reduced $K$ ring, denoted by $\bar{K}^\ast (FV_{m,2s}).$

\begin{corollary}
(1) We have a canonical (real) line bundle $\xi_{m,2s}$ associated to the double cover $V_{m,2s} \to FV_{m,2s}$ (Ref \cite[Lemma 11]{bgs}). It follows that the element $y+1 \in K^\ast(FV_{m,2s})$ can be identified with the complexification $c(\xi_{m,2s})$ of $\xi_{m,2s}).$\\
(2) From \cite[Lemma 11, 12]{bgs}, we know that the tangent bundle $TFV_{m,2k}$ satisfies a relation given by
\begin{equation}\label{stableiso}
\frac{s(s+1)}{2}\big((\epsilon_{\mR} \oplus \xi_{m,2s}) \otimes  (\epsilon_{\mR} \oplus \xi_{m,2s})\big)  \oplus TFV_{m,2s} \cong ms(\epsilon_{\mR} \oplus \xi_{m,2s}) \oplus s\epsilon_{\mR}.
\end{equation}
So, in $KO^\ast(FV_{m,2s})$ we have
\[
s(s+1)[(\epsilon_{\mR} \oplus \xi_{m,2s})]+ [TFV_{m,2s}]= ms[(\epsilon_{\mR} \oplus \xi_{m,2s})] + [s\epsilon_{\mR}].
\]
Further simplification gives us
\[
[TFV_{m,2s}]= s(m-s)+ s(m-s-1)[\xi_{m,2s}]
\]
So,  in $\bar{K}(FV_{m,2s)},$ we have
\[
[c(TFV_{m,2s})]= s(m-s-1)[y].
\]
\end{corollary}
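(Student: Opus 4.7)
The corollary has two parts, which I would prove in order; both rely heavily on identifying the algebraic class $y$ with a geometric one, so I begin with part (1). My plan is to use the Hodgkin construction itself, which not only produces $K^{\ast}(FV_{m,2s})$ abstractly but assigns to each representation $\rho$ of $H_{m,2s}$ the class of the associated bundle $Spin(m) \times_{H_{m,2s}} V_{\rho}$ in $K^{0}(FV_{m,2s})$. Under this correspondence the class $\theta = y+1 \in RH_{m,2s}$, the unique non-trivial character of the quotient $H_{m,2s}/Spin(m-2s) \cong C_2$, is sent to the complex line bundle associated to the principal $C_2$-bundle $Spin(m)/Spin(m-2s) = V_{m,2s} \to FV_{m,2s}$. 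Since that principal $C_2$-bundle is by definition the frame bundle of $\xi_{m,2s}$, the associated complex line bundle is precisely $c(\xi_{m,2s})$, establishing $y+1 = [c(\xi_{m,2s})]$.

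For part (2), I would manipulate the stable isomorphism (\ref{stableiso}) purely formally in $KO^{0}(FV_{m,2s})$. The key observation is that any real line bundle satisfies $\xi \otimes_{\mR} \xi \cong \epsilon_{\mR}$, so $(\epsilon_{\mR} \oplus \xi_{m,2s})^{\otimes 2} \cong 2(\epsilon_{\mR} \oplus \xi_{m,2s})$. Substituting this into (\ref{stableiso}) and cancelling $s(s+1)$ copies of $(\epsilon_{\mR} \oplus \xi_{m,2s})$ yields $[TFV_{m,2s}] = s(m-s) + s(m-s-1)[\xi_{m,2s}]$, the intermediate formula asserted in the statement. Applying the complexification ring homomorphism $c \colon KO^{0} \to K^{0}$ and invoking part (1) then gives
\[
[c(TFV_{m,2s})] = s(m-s) + s(m-s-1)(y+1) = s(2m-2s-1) + s(m-s-1)y
\]
in $K^{0}(FV_{m,2s})$. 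The constant $s(2m-2s-1)$ equals the real rank of $TFV_{m,2s}$, hence vanishes in $\bar{K}^{\ast}(FV_{m,2s})$, leaving $[c(TFV_{m,2s})] = s(m-s-1)[y]$, as required.

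The only genuinely non-trivial step is part (1): the remainder is symbol-pushing using $\xi^{\otimes 2} \cong \epsilon_{\mR}$ and the ring-map property of $c$. The identification $y+1 = [c(\xi_{m,2s})]$ requires tracking how the Hodgkin edge map $RH_{m,2s} \twoheadrightarrow E_{\infty}^{0,0} \hookrightarrow K^{0}(FV_{m,2s})$ converts representations into associated bundles and recognizing the geometric double cover $V_{m,2s} \to FV_{m,2s}$ as the classifying cover of $\xi_{m,2s}$ described in \cite[Lemma 11]{bgs}. This identification is the hinge on which the geometric content of the corollary rests.
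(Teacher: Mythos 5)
Your proposal is correct and follows essentially the same route as the paper: part (1) is the standard identification of $y+1=\theta$ with the complexified line bundle of the double cover via the associated-bundle/edge-map mechanism (which the paper simply asserts with a citation), and part (2) is the same formal manipulation, using $(\epsilon_{\mR}\oplus\xi_{m,2s})^{\otimes 2}\cong 2(\epsilon_{\mR}\oplus\xi_{m,2s})$ implicitly present in the paper's passage from $\tfrac{s(s+1)}{2}$ to $s(s+1)$, followed by complexification and discarding the rank term $s(2m-2s-1)$ in reduced $K$-theory. You in fact supply slightly more justification than the paper does, but the argument is the same.
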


\section*{Appendix A}\label{appendix}
In general, identifying generators of  polynomial ring in several variables over a field or a ring  by inspection could be a tedious task. Here, we share a systematic way for determining generators using the notion of Grobner basis. 

Classically, Grobner basis helps in determining the generating set of an ideal in a polynomial ring \(\mF[x_1, \ldots, x_n]\) over a field \(\mF\) \cite[Section 9.6]{df}.  Additionally, in \cite{al}, an in-depth exploration of extending the notion of Grobner bases from fields to rings is provided. In this appendix, our discussions on Grobner basis in within the context of Lemma \ref{lkergenerators}, that is, for the broader set up of polynomial ring in several variables over a Noetherian commutative ring. To elucidate our approach to utilizing Grobner bases for generating sets, we introduce several key terminologies.
 \begin{itemize}
\item Any polynomial \(f \in B\) is a finite sum of terms of the form  \(a\delta_c^{r_1}y^{r_2}\), where \(r_i \in \{0,1\}\). 
\item We define degree \textit{lexicographical order} (\cite{al} pg: 19) on the ring \(B\) with \( \delta_c > y\). So, we have \[ 1 < y < \delta_c < \delta_cy.\]
\item We define {\it position over term ordering} (\cite{al} pg: 142) on the module \(Bx_1 \oplus Bx_2\) with \(x_1 > x_2\). Hence we have, \[x_2 <yx_2 < \delta_cx_2 < \delta_cyx_2 < x_1 < yx_1 < \delta_cx_1 < \delta_cyx_1.\]
\item Let \(lt(f)\) denotes the leading term of the polynomial \(f \in Bx_1 \oplus Bx_2\) subject to the above defined position over term order. 
For \(W \subseteq Bx_1 \oplus Bx_2. \)  Let  \(Lt(W) = \langle \{ lt(w)| w \in W\} \rangle\) denotes the ideal generated by the leading terms of the polynomials in \(W\).
\end{itemize} 
Now, considering \(Bx_1 \oplus Bx_2 \) as the direct sum of rings, \(B*8\), we apply the following necessary and sufficient condition on Grobner basis of an ideal to deduce a minimal generating set of \(Ker(d_1)\). 
\begin{theorem}\cite[Page 208]{al} \label{tgrobner}
Let \(R\) be a ring. Let I be an ideal of \(A = R[x_1, x_2, \ldots, x_n]\). Let \(G=\{g_1, g_2, \ldots, g_t\}\) be a set of non-zero polynomials in \(I\). Then G is a grobner basis of \(I\) if and only if \(Lt(G) = Lt(I).\)
\end{theorem}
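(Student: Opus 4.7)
The plan is to establish the two implications separately, adapting the standard Gr\"obner basis theory from the field-coefficient case to polynomial rings over a Noetherian commutative ring $R$, as developed in \cite{al}. The central tool is the multivariate division algorithm, suitably reformulated so that divisibility of terms takes into account both the monomial part (under the chosen monomial order) and the coefficient part (in the ring $R$).

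For the forward direction, suppose $G$ is a Gr\"obner basis of $I$. The inclusion $Lt(G) \subseteq Lt(I)$ is immediate because $G \subseteq I$, so every leading term of an element of $G$ is a leading term of an element of $I$. For the reverse inclusion, take any nonzero $f \in I$. By the defining property of a Gr\"obner basis (namely, that $f$ admits a representation $f = \sum_{i=1}^t q_i g_i$ in which the maximal leading terms of the summands $q_i g_i$ realize $lt(f)$ without cancellation), there is some index $i_0$ with $lt(f) = lt(q_{i_0}) \cdot lt(g_{i_0}) \in Lt(G)$. Hence $Lt(I) \subseteq Lt(G)$.

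For the backward direction, assume $Lt(G) = Lt(I)$ and let $f \in I$. Apply the multivariate division algorithm over $R[x_1,\ldots,x_n]$ with divisors $g_1,\ldots,g_t$ to obtain an expression
\[
f = \sum_{i=1}^t q_i g_i + r,
\]
where no term appearing in the remainder $r$ lies in $Lt(G)$. Since $\sum q_i g_i \in I$ and $f \in I$, we have $r \in I$. If $r \neq 0$, then $lt(r) \in Lt(I) = Lt(G)$, contradicting the property of $r$ produced by the algorithm. Therefore $r = 0$, which shows that every element of $I$ reduces to zero modulo $G$; this is precisely the defining condition for $G$ to be a Gr\"obner basis of $I$.

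The main obstacle lies in making the division algorithm and the notion of reduction rigorous over $R$ rather than over a field. A term $c \, x^\alpha$ appearing in $r$ is reducible by $g_i$ only when the monomial of $lt(g_i)$ divides $x^\alpha$ \emph{and} the leading coefficient of $g_i$ divides $c$ in $R$; consequently $Lt(G)$ must be interpreted as the ideal of $R[x_1,\ldots,x_n]$ generated by the $lt(g_i)$ as full terms (coefficient times monomial), so that membership in $Lt(G)$ captures both conditions simultaneously. Termination of the algorithm then rests on two ingredients: the well-ordering of monomials under the chosen order (here, degree-lexicographic with $\delta_c > y$) to prevent infinite descent among monomials, and the Noetherian hypothesis on $R$ to preclude infinite chains of coefficient reductions at a fixed monomial. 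With these two inputs combined, the usual termination-and-correctness argument for the division algorithm transfers to the ring setting, and the theorem follows.
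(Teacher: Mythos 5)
The paper does not actually prove this statement: it is quoted from Adams--Loustaunau \cite[Page 208]{al} and used as a black box in Appendix A, so there is no internal proof to compare yours against; your attempt has to stand on its own, and as written it has a genuine gap exactly where the ring case differs from the field case. In the backward direction you run a division algorithm in which a term $c\,x^{\alpha}$ of the remainder is reducible by $g_i$ only when the monomial of $lt(g_i)$ divides $x^{\alpha}$ \emph{and} $\mathrm{lc}(g_i)$ divides $c$ in $R$, and you then assert that the remainder $r$ contains no term lying in $Lt(G)$. That assertion fails for this notion of reduction: $Lt(G)$ is the \emph{ideal} generated by the terms $lt(g_i)$, so a term of $r$ can lie in $Lt(G)$ via an $R$-linear combination of several generators without being divisible by any single $lt(g_i)$. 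Concretely, in $\mZ[x]$ take $G=\{2x,3x\}$, so $I=(x)$ and $Lt(G)=Lt(I)=(x)$; the element $f=x\in I$ is irreducible under your rule (neither $2$ nor $3$ divides $1$) and survives as a nonzero remainder, so the contradiction you want never materializes even though $G$ is a Gr\"obner basis. The repair --- and the way Adams--Loustaunau actually set up the theory over a Noetherian ring --- is to define one-step reduction by subtracting a combination $\sum_i c_iX_ig_i$ with $lt(f)=\sum_i c_iX_i\,lt(g_i)$; with that definition, ``no term of $r$ lies in the ideal $Lt(G)$'' is precisely what the algorithm guarantees, and your termination argument (well-ordering of monomials plus the Noetherian hypothesis on $R$) then does its job.

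Two further points. In the forward direction, over a ring you cannot conclude that a single index $i_0$ satisfies $lt(f)=lt(q_{i_0})\,lt(g_{i_0})$; cancellation-free representations only give $lt(f)=\sum_{i\in S}lt(q_i)\,lt(g_i)$ over the set $S$ of indices attaining the maximal monomial, which is still enough because $Lt(G)$ is an ideal --- but the statement should be corrected. More importantly, you use two different ``defining properties'' of a Gr\"obner basis: the existence of standard representations in the forward direction and reduction of every element of $I$ to zero in the backward direction. These conditions are indeed equivalent, but their equivalence is part of the content of the cited theorem (which in the source is a list of equivalent characterizations), so you must fix one definition at the outset and derive the others, rather than switching between them as convenient.
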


Utilizing the sufficient condition outlined in Theorem \ref{tgrobner} regarding the leading terms of \(I\), we proceed to choose a potential set of generators for \(Ker(d_1)\). We explain the procedure step by step below, in the context of Lemma \ref{lkergenerators}.

\begin{steps}
\item Initially, since the monomial \(\delta_cyx_1\) is assigned the highest order, our aim is to identify a polynomial within the kernel that satisfies the requisite relations while possessing a minimal non-zero coefficient for the term \(\delta_cyx_1\).  Thus, we propose a candidate  \(g_1:= (y+2)\delta_cyx_1.\)\\
\item Subsequently, we aim to select a polynomial corresponding to the monomial \(\delta_cx_1\).Here, our objective is to find a polynomial from the kernel with a leading term of \(\delta_cx_1\) and a minimal coefficient. We have several options available, among which we choose one, such as \(g_2:=2^{n-\alpha} \delta_cx_1+ 2^{-\alpha}b_0\delta_cyx_2\) for \(\alpha = n\) and for \(\alpha < n\), we pick \(g_3:=2^{n-1-\alpha}\delta_cx_1 + 2^{-\alpha}b_0(\delta_c +2^cy + 2^{c+1})x_2\), where \(2^{\alpha} = gcd\{2^n, b_0\}\).\\
\item Next, we pick  a polynomial corresponding to \(yx_1\). Here  \(g_4:=(y+2)x_1\) serves as a suitable choice. It is worth noting that \(lt(g_3)\) makes \(g_1\) a redundant choice.\\
\item Continuing, we proceed to pick a polynomial corresponding to the monomial \(x_1\), resulting in \(g_5:=2^{n - \alpha}x_1 + 2^{-\alpha}b_0yx_2\). Again, we observe that \(lt(g_4)\) makes \(g_2\) a redundant choice .\\
\item Moving forward, we pick a polynomial with leading term as \(\delta_cyx_2\) and we have a minimal choice \(g_6:=\delta_cyx_2 + 2\delta_cx_2 +2^{c+1}yx_2 + 2^{c+2}x_2\).\\
\item Finally, we observe that there are no other polynomials in \(Ker(d_1)\) with leading term as \(\delta_cx_2, yx_2\) or \( x_2\).
\end{steps}

Thus, as in Lemma \ref{lkergenerators},\\
For \(\alpha = n\),  we have the generating set,
\begin{align*}
u_1 :=&  (y+2)x_1  &d_1(u_1) = 0\\
u_2:= &(y+2)(2^{c+1}+ \delta_c)x_2 &d_1(u_2) = 0\\
u_3:= & 2^{n-\alpha}x_1 +2^{-\alpha} b_0yx_2  &d_1(u_3) = 0
\end{align*} 
For \(\alpha < n\),  we have the generating set,
\begin{align*}
u_1 :=&  (y+2)x_1  &d_1(u_1) = 0\\
u_2:= &(y+2)(2^{c+1}+ \delta_c)x_2 &d_1(u_2) = 0\\
u_3:= & 2^{n-\alpha}x_1 +2^{-\alpha} b_0yx_2  &d_1(u_3) = 0\\
u_4:=& 2^{n-1-\alpha}\delta_cx_1 - 2^{-\alpha}b_0(\delta_c +2^cy + 2^{c+1})x_2 &d_1(u_4) = 0
\end{align*}
The table below outlines the various scenarios for obtaining polynomials that meet the condition of having minimal non-zero coefficients on specific leading terms while belonging to \(Ker(d_1)\).

\begin{landscape}
\begin{table}
\begin{tabular}{|c|c|c|c|c|c|c|c|>{\footnotesize}p{7.0cm}|}

\hline
$ p_1$ &$ p_2$ & $p_3$ & $q_1$ & $q_3$ &$ p_4$ & $q_2$ & $q_4$& Element of $Ker(d_1)$ \\
\hline
\hline
 $ 2^{n-\alpha} $ & $0$ & $0$ & $0$ &  $0$  & $ -2^{n+c-\alpha+2}$ & $ 2^{-\alpha+2}b_0$ & $2^{c+3-\alpha}b_0$& $2^{n-\alpha}(\delta_cy -2^{c+2})x_1 + 2^{-\alpha+2}b_0(\delta_c +2^{c+1})x_2$ \\
\hline
 $ 0 $ & $ 2^{n-\alpha}$ & 0 & 0 &  0  & $ 2^{n+c-\alpha+1}$ & $-2^{-\alpha+1}b_0$ & $-2^{c+2-\alpha}b_0$& $2^{n-\alpha}(\delta_c + 2^{c+1})x_1  - 2^{-\alpha}b_0(2\delta_c +2^{c+2})x_2 $ \\ 
 \hline
 $ 0 $ & $ 2^{n-1-\alpha}$ & 0 & 0 &  0  & $ 2^{n+c-\alpha}$ & $-2^{-\alpha}b_0$ & $-2^{c+1-\alpha}b_0$& $2^{n-\alpha}(2^{-1}\delta_c + 2^{c})x_1  - 2^{-\alpha}b_0(\delta_c +2^{c+1})x_2 $ \\ 
\hline
 $ 0 $ &$ 0$& $1$ & 0 &  0  & $ 2 $ & $0$ & $0$& $(y+2)x_1$ \\
\hline
 $ 0 $ &$ 0$& $0$ & $2^{-\alpha}b_0$ &  $0$  & $ -2^{n+c-\alpha+1}$ & $2^{-(\alpha-1)}b_0$ & $2^{c+2-\alpha}b_0$ & $-2^{n+c-\alpha+1}x_1 + 2^{-\alpha}b_0(\delta_cy -2^{n+c+1}\delta_c + 2^{c+2})x_2$ \\
\hline
 $ 0 $ &$ 0$& $0$ & $0$ &  $2^{-\alpha}b_0$  & $ 2^{n-\alpha}$ & $0$ & $0$&  $2^{n-\alpha}x_1 + 2^{-\alpha}b_0yx_2$\\
\hline
\hline
 $ 0 $ &$ 0$& $0$ & $1$ &  $2^{c+1}$  & $ 0 $ & $2$ & $2^{c+2}$& $ (\delta_cy +2\delta_c +2^{c+1}y +2^{c+2})x_2$\\
\hline
 $ 0 $ &$ 0$& $1$ & $0$ &  $2^{-\alpha}b_0$  & $ 2^{n-\alpha}+2$ & $0$ & $0$& $(y+2^{n-\alpha}+2)x_1+ 2^{-\alpha}b_0yx_2$ \\
\hline
 $ 0 $ &$ 0$& $2^{n+c-\alpha-1}$ & $2^{-\alpha}b_0$ &  $0$  & $ -2^{n+c-\alpha}$ & $2^{-\alpha+1}b_0$ & $2^{c+2-\alpha}b_0$& $ (2^{n+c-\alpha}(2^{-1}y-1)x_1 + 2^{-\alpha}b_0(\delta_cy +2\delta_c + 2^{c+2})x_2 $ \\
\hline
$ 0 $ &$ 2^{n-\alpha}$& $0$ & $0$ &  $-2^{c-\alpha+1}b_0$  & $ 0$ & $-2^{-\alpha+1}b_0$ & $-2^{c+2-\alpha}b_0$& $ 2^{n-\alpha}\delta_c x_1 - 2^{-\alpha}b_0(2\delta_c +2^{c+1}y +2^{c+2})x_2$ \\
\hline
$ 0 $ &$ 2^{n-1-\alpha}$& $0$ & $0$ &  $-2^{c-\alpha}b_0$  & $ 0$ & $-2^{-\alpha}b_0$ & $-2^{c+1-\alpha}b_0$& $ 2^{n-1-\alpha}(\delta_c )x_1 - 2^{-\alpha}b_0(\delta_c +2^{c}y +2^{c+1})x_2$ \\
\hline
$ 0 $ &$2^{n-\alpha}$& $0$ & $2^{-\alpha}b_0$ &  $0$  & $ 0$ & $0$ & $0$& $2^{n-\alpha} \delta_cx_1+ 2^{-\alpha}b_0\delta_cyx_2$ \\
\hline
$ 0 $ &$2^{n-1-\alpha}$& $0$ & $2^{-\alpha}b_0$ &  $0$  & $ -2^{n+c-\alpha}$ & $2^{-\alpha}b_0$ & $2^{c+1-\alpha}b_0$& $2^{n-1-\alpha} (\delta_c+ 2^{c+1})x_1+ 2^{-\alpha}b_0(\delta_cy +\delta_c +2^{c+1})x_2$ \\
\hline
$ 0 $ &$ 2^{n-\alpha}$& $2^{n+c-\alpha-1}$ & $0$ &  $0$  & $ 3.2^{n+c-\alpha}$ & $-2^{-\alpha+1}b_0$ & $-2^{c+2-\alpha}b_0$& $2^{n-\alpha}(\delta_c+2^{c-1}y +3.2^{c})x_1 -2^{-\alpha}b_0(2\delta_c+2^{c+2})x_2 $\\
\hline
$ 0 $ &$ 2^{n-1-\alpha}$& $-2^{n+c-\alpha-1}$ & $0$ &  $0$  & $ 0$ & $-2^{-\alpha}b_0$ & $-2^{c+1-\alpha}b_0$& $2^{n-\alpha}(2^{-1}\delta_c-2^{c-1}y)x_1 -2^{-\alpha}b_0(\delta_c+2^{c+1})x_2 $\\
\hline
$ 2^{n-\alpha} $ &$ 0$& $0$ & $0$ &  $2^{c-\alpha}b_0$  & $ -3(2^{n+c-\alpha})$ & $2^{-\alpha+2}b_0$ & $2^{c-\alpha+3}b_0$ & $ 2^{n-\alpha}(\delta_cy - 3(2^c))x_1 + 2^{-\alpha}b_0(2^2\delta_c+2^cy+2^{c+3})x_2$ \\
\hline
$ 2^{n-\alpha} $ &$ 0$& $0$ & $2^{-\alpha}b_0$ &  $0$  & $ -3(2^{n+c-\alpha+1})$ & $3.2^{-\alpha+1}b_0$ & $3.2^{-\alpha+c+2}b_0$ & $2^{n-\alpha}(\delta_cy -3(2^{c+1}))x_1 +2^{-\alpha}b_0(\delta_cy + 6\delta_c +3.2^{c+2})x_2$ \\
\hline
$ 2^{n-\alpha} $ &$ 0$& $2^{n+c-\alpha-1}$ & $0$ &  $0$  & $ -3(2^{n+c-\alpha})$ & $2^{-\alpha+2}b_0$ & $2^{c-\alpha+3}b_0$& $2^{n-\alpha}(\delta_cy + 2^{c-1}y -3(2^{c}))x_1 + 2^{-\alpha}b_0(2^2\delta_c + 2^{c+3})x_2$\\
\hline
$ 1 $ &$ 2 $& $0$ & $0$ &  $0$  & $ 0$ & $0$ & $0$& $(\delta_cy+2\delta_c)x_1$ \\
\hline
\hline
$ 0 $ &$ 2^{n-\alpha} $& $  2^{n+c-\alpha-1}$ & $0$ &  $ 2^{c-\alpha}b_0$  & $  2^{n+c-\alpha+2}$ & $-2^{-\alpha+1}b_0$ & $-2^{c+2-\alpha}b_0$& $ 2^{n-\alpha}(\delta_c +2^{c-1}y + 2^{c+2})x_1 - 2^{-\alpha}b_0(2\delta_c - 2^cy +2^{c+2})x_2$ \\
\hline
$ 0 $ &$ 2^{n-1-\alpha} $& $  -2^{n+c-\alpha-1}$ & $0$ &  $ 2^{c-\alpha}b_0$  & $  2^{n+c-\alpha}$ & $-2^{-\alpha}b_0$ & $-2^{c+1-\alpha}b_0$& $ 2^{n-\alpha}(2^{-1}\delta_c -2^{c-1}y + 2^{c})x_1 - 2^{-\alpha}b_0(\delta_c - 2^cy +2^{c+1})x_2$ \\
\hline
$ 2^{n-\alpha}  $ &$ 0 $& $ 2^{n+c-\alpha-1}$ & $0$ &  $2^{c-\alpha}b_0$  & $ -2^{n+c-\alpha+1}$ & $2^{-\alpha+2}b_0$ & $2^{c-\alpha+3}b_0$& $ 2^{n-\alpha}(\delta_cy +2^{c-1}y -2^{c+1})x_1 + 2^{-\alpha}b_0(2^2\delta_c + 2^cy + 2^{c+3})x_2$\\
\hline
$ 1 $ &$ 2 $& $ 2^{c+1}$ & $0$ &  $0$  & $ 2^{c+2}$ & $0$ & $0$&  $ (\delta_cy +2\delta_c +2^{c+1}y+2^{c+2})x_1$\\
\hline
$ 0 $ &$ 2^{n-\alpha} $& $ 2^{n+c-\alpha-1}$ & $ 2^{-\alpha}b_0$ &  $0$ & $2^{n+c-\alpha}$  & $0$ & $0$& $ 2^{n-\alpha}(\delta_c + 2^{c-1}y + 2^c)x_1 + 2^{-\alpha}b_0\delta_cyx_2$ \\
\hline
$ 0 $ &$ 2^{n-1-\alpha} $& $ 2^{n+c-\alpha-1}$ & $ 2^{-\alpha}b_0$ &  $0$ & $0$  & $2^{-\alpha}b_0$ & $2^{c+1-\alpha}b_0$& $ 2^{n-\alpha}(2^{-1}\delta_c + 2^{c-1}y)x_1 + 2^{-\alpha}b_0(\delta_cy +\delta_c +2^{c+1})x_2$ \\
\hline
$ 0 $ &$ 0 $& $ 2^{n+c-\alpha-1}$ & $ 2^{-\alpha}b_0$ &  $2^{c-\alpha}b_0$  & $ 0$ & $2^{-\alpha+1}b_0$ & $2^{c-\alpha+2}b_0$& $ 2^{n+c-\alpha-1}yx_1+2^{-\alpha}b_0(\delta_cy +2\delta_c +2^cy+2^{c+2})x_2 $ \\
\hline

\hline
\end{tabular}
\caption{A possible solution set}
\end{table}
\end{landscape}
\begin{landscape}
\begin{table}
  \centering

\begin{tabular}{|c|c|c|c|c|c|c|c|>{\footnotesize}p {8.5 cm}|}
\hline
$ p_1$ &$ p_2$ & $p_3$ & $q_1$ & $q_3$ &$ p_4$ & $q_2$ & $q_4$& Element of $Ker(d_1)$ \\
\hline
$ 2^{n-\alpha} $ &$ 0 $& $ 0 $ & $ 2^{-\alpha}b_0$ &  $ 2^{c-\alpha}b_0 $  & $ -5.2^{n+c-\alpha}$ & $3.2^{-\alpha+1}b_0$ & $3.2^{-\alpha + c +2}b_0$& $2^{n-\alpha}(\delta_cy -5.2^{c})x_1+2^{-\alpha}b_0(\delta_cy +6\delta_c +2^cy + 3.2^ {c+2} )x_2$\\
\hline
$ 2^{n-\alpha} $ &$ 2^{n-\alpha} $& $ 0 $ & $ 0$ &  $ 2^{c-\alpha}b_0 $  & $ -2^{n+c-\alpha}$ & $2^{-\alpha+1}b_0$ & $2^{c-\alpha+2}b_0$& $ 2^{n-\alpha}(\delta_cy +\delta_c -2^c)x_1 + 2^{-\alpha}b_0(2\delta_c + 2^cy + 2^{c+2})x_2$ \\
\hline
$ 2^{n-\alpha} $ &$ 0 $& $ 2^{n+c-\alpha-1} $ & $ 2^{-\alpha}b_0$ &  $ 0 $  & $-5.2^{n+c-\alpha}$ & $-3.2^{-\alpha+1}b_0$ & $-3.2^{c-\alpha+2}b_0$& $ 2^{n-\alpha}(\delta_cy+2^{c-1}y-5.2^{c})x_1+2^{-\alpha}b_0(\delta_cy - 6\delta_c -3.2^{c+2})x_2$ \\
\hline
$ 2^{n-\alpha} $ &$ 2^{n-\alpha} $& $ 0 $ & $2^{-\alpha}b_0$ &  $ 0 $  & $ -2^{n+c-\alpha+2}$ & $2^{-\alpha+2}b_0$ & $2^{c-\alpha+3}b_0$& $ 2^{n-\alpha}(\delta_cy +\delta_c - 2^{c+2})x_1+2^{-\alpha}b_0(\delta_cy +2^2\delta_c + 2^{c+3})x_2$\\
\hline
$ 0 $ &$ 2^{n-\alpha} $& $ 0 $ & $ 2^{-\alpha}b_0$ &  $ 2^{c-\alpha}b_0 $  & $ 2^{n+c-\alpha}$ & $0$ & $0$&  $ 2^{n-\alpha}(\delta_c +2^c)x_1+2^{-\alpha}b_0(\delta_cy+2^cy)x_2$\\
\hline
$ 0 $ &$ 2^{n-1-\alpha} $& $ 0 $ & $ 2^{-\alpha}b_0$ &  $ 2^{c-\alpha}b_0 $  & $0$ & $2^{-\alpha}b_0$ & $2^{c+1-\alpha}b_0$&  $ 2^{n-\alpha}(2^{-1}\delta_cx_1+2^{-\alpha}b_0(\delta_cy+\delta_c+2^cy +2^{c+1})x_2$\\

\hline

$ 2^{n-\alpha}$ &$2^{n-\alpha}$ & $2^{n+c-\alpha-1}$ & $2^{-\alpha}b_0$ & $0$ &$-3.2^{n+c-\alpha}$ & $2^{-\alpha+2}b_0$ & $2^{c-\alpha+3}b_0$& $2^{n-\alpha}(\delta_cy + \delta_c +2^{c-1}y - 3.2^c)x_1+2^{-\alpha}b_0(\delta_cy + 4\delta_c +2^{c+3})x_2$ \\
\hline 
$ 2^{n-\alpha}$ &$ 2^{n-\alpha}$ & $2^{n+c-\alpha-1}$ & $0$ & $2^{-\alpha}b_0$ &$ 0 $ & $ 2^{-\alpha+1}b_0$ & $2^{c-\alpha+2}b_0$& $2^{n-\alpha}(\delta_cy + \delta_c +2^{c-1}y)x_1+2^{-\alpha}b_0(2\delta_c +2^cy+ 2^{c+2})x_2$ \\
\hline
$ 2^{n-\alpha}$ &$ 2^{n-\alpha}$ & $0$ & $2^{-\alpha}b_0$ &$ 2^{c-\alpha}b_0$ & $-3.2^{n+c-\alpha}$ &  $ 2^{-\alpha+2}b_0$ & $2^{c-\alpha+2}b_0$& $2^{n-\alpha}(\delta_cy + \delta_c -3.2^{c-\alpha})x_1+2^{-\alpha}b_0(\delta_cy + 2^2\delta_c +2^cy+ 2^{c+3})x_2$ \\
\hline
$ 2^{n-\alpha}$ &$0$ & $2^{n+c-\alpha-1}$ & $2^{-\alpha}b_0$ &$ 2^{c-\alpha}b_0$ & $ 2^{n+c-\alpha+2}$ & $ 3.2^{-\alpha+1}b_0$ & $3.2^{c-\alpha+2}b_0$& $2^{n-\alpha}(\delta_cy + 2^{c+2})x_1+2^{-\alpha}b_0(\delta_cy + 6\delta_c +2^cy+ 3.2^{c+2})x_2$ \\
\hline
$0$ &$2^{n-\alpha}$ & $2^{n+c-\alpha-1}$ & $2^{-\alpha}b_0$ &$ 2^{c-\alpha}b_0$ & $ 2^{n+c-\alpha+1}$ & $ 0 $ & $0$& $2^{n-\alpha}(\delta_c + 2^{c-1}y+2^{c+1})x_1+2^{-\alpha}b_0(\delta_cy +2^cy)x_2$ \\
\hline
$0$ &$2^{n-1-\alpha}$ & $2^{n+c-\alpha-1}$ & $2^{-\alpha}b_0$ &$ 2^{c-\alpha}b_0$ & $ 2^{n+c-\alpha}$ & $ 2^{-\alpha}b_0 $ & $2^{-\alpha}b_0$& $2^{n-\alpha}(2^{-1}\delta_c + 2^{c}y+2^{c+1})x_1+2^{-\alpha}b_0(\delta_cy +\delta_c+2^cy+2^{c+1})x_2$ \\
\hline
\hline
$2^{n-\alpha}$ &$2^{n-\alpha}$ & $2^{n+c-\alpha-1}$ & $2^{-\alpha}b_0$ &$ 2^{c-\alpha}b_0$ & $ -2^{n+c-\alpha+1}$ & $ 2^{-\alpha+2} $ & $2^{c-\alpha + 3}$& $2^{n-\alpha}(\delta_cy +\delta_c +2^{c-1}(y - 4))x_1+2^{-\alpha}b_0(\delta_cy +4\delta_c+2^c(y+8))x_2$ \\
\hline
\end{tabular}
\caption{A possible solution set(continued)}
\end{table}
\end{landscape}

In the table below, we've grouped the solution set according to leading terms for easier reference.\\
\noindent
\begin{tabular}{ |p{8.5cm}|p{8.5cm}|  }
\hline
\multicolumn{2}{|c|}{\colorbox[gray]{0.8} {Polynomials with leading term $\bf{\delta_cyx_1}$}}\\
\hline
\multicolumn{2}{|l|}{$ 2^{n-\alpha}(\delta_cy +2^{c-1}y -2^{c+1})x_1 + 2^{-\alpha}b_0(2^2\delta_c + 2^cy + 2^{c+3})x_2$}\\
\multicolumn{2}{|l|}{$2^{n-\alpha}(\delta_cy -2^{c+2})x_1 + 2^{-\alpha+2}b_0(\delta_c +2^{c+1})x_2$} \\
\multicolumn{2}{|l|}{$2^{n-\alpha}(\delta_cy - 3(2^c))x_1 + 2^{-\alpha}b_0(2^2\delta_c+2^cy+2^{c+3})x_2$}\\
\multicolumn{2}{|l|}{$2^{n-\alpha}(\delta_cy -3(2^{c+1}))x_1 +2^{-\alpha}b_0(\delta_cy + 6\delta_c +3.2^{c+2})x_2$} \\
\multicolumn{2}{|l|}{$2^{n-\alpha}(\delta_cy + 2^{c-1}y -3(2^{c}))x_1 + 2^{-\alpha}b_0(2^2\delta_c + 2^{c+3})x_2$}\\
\multicolumn{2}{|l|}{$(\delta_cy+2\delta_c)x_1$}\\
\multicolumn{2}{|l|}{$ (\delta_cy +2\delta_c +2^{c+1}y+2^{c+2})x_1$}\\
\multicolumn{2}{|l|}{$2^{n-\alpha}(\delta_cy -5.2^{c})x_1+2^{-\alpha}b_0(\delta_cy +6\delta_c +2^cy + 3.2^ {c+2} )x_2$}\\
\multicolumn{2}{|l|}{$ 2^{n-\alpha}(\delta_cy +\delta_c -2^c)x_1 + 2^{-\alpha}b_0(2\delta_c + 2^cy + 2^{c+2})x_2$}\\
\multicolumn{2}{|l|}{$ 2^{n-\alpha}(\delta_cy + \delta_c - 2^{c+2})x_1+2^{-\alpha}b_0(\delta_cy +2^2\delta_c + 2^{c+3})x_2$}\\
\multicolumn{2}{|l|}{$2^{n-\alpha}(\delta_cy+2^{c-1}y-5.2^{c})x_1+2^{-\alpha}b_0(\delta_cy - 6\delta_c -3.2^{c+2})x_2$}\\
\multicolumn{2}{|l|}{$2^{n-\alpha}(\delta_cy + \delta_c +2^{c-1}y - 3.2^c)x_1+2^{-\alpha}b_0(\delta_cy + 4\delta_c +2^{c+3})x_2$}\\
\multicolumn{2}{|l|}{$2^{n-\alpha}(\delta_cy + \delta_c +2^{c-1}y)x_1+2^{-\alpha}b_0(2\delta_c +2^cy+ 2^{c+2})x_2$}\\
\multicolumn{2}{|l|}{$2^{n-\alpha}(\delta_cy + \delta_c -3.2^{c})x_1+2^{-\alpha}b_0(\delta_cy + 2^2\delta_c +2^cy+ 2^{c+3})x_2$}\\
\multicolumn{2}{|l|}{$2^{n-\alpha}(\delta_cy + 2^{c-1}y+ 2^{c+2})x_1+2^{-\alpha}b_0(\delta_cy + 6\delta_c +2^cy+ 3.2^{c+2})x_2$}\\
\multicolumn{2}{|l|}{$2^{n-\alpha}(\delta_cy +\delta_c +2^{c-1}y - 2^{c+1})x_1+2^{-\alpha}b_0(\delta_cy +4\delta_c+2^cy+2^{c+3})x_2$}\\
\hline
\hline
\multicolumn{2}{|c|}{ \colorbox[gray]{0.8}{Polynomials with leading term $\bf{ yx_1}$ }}\\
\hline
\multicolumn{2}{|l|}{$(y+2)x_1$} \\
\multicolumn{2}{|l|}{$(y+2^{n-\alpha}+2)x_1+ 2^{-\alpha}b_0yx_2$}\\
\multicolumn{2}{|l|}{$(2^{n+c-\alpha}(2^{-1}y-1))x_1 + 2^{-\alpha}b_0(\delta_cy +2\delta_c + 2^{c+2})x_2 $}\\
\multicolumn{2}{|l|}{$2^{n+c-\alpha-1}yx_1+2^{-\alpha}b_0(\delta_cy +2\delta_c +2^cy+2^{c+2})x_2 $}\\
\hline
\hline
 \multicolumn{2}{|c|}{\colorbox[gray]{0.8}{Polynomials with leading term $ \bf{x_1}$} }\\
\hline
\multicolumn{2}{|l|}{$-2^{n+c-\alpha+1}x_1 + 2^{-\alpha}b_0(\delta_cy -2^{n+c+1}\delta_c + 2^{c+2})x_2$} \\
\multicolumn{2}{|l|}{$2^{n-\alpha}x_1 + 2^{-\alpha}b_0yx_2$}\\
\multicolumn{2}{|l|}{$ -2^{n+c-\alpha}x_1 + 2^{-\alpha}b_0(\delta_cy +2\delta_c +2^cy +2^{c+2})x_2$}\\
\hline
\hline
 \multicolumn{2}{|c|}{\colorbox[gray]{0.8} {Polynomials with leading term $\bf{\delta_cyx_2}$}} \\
\hline
\multicolumn{2}{|l|}{$(\delta_cy +2\delta_c +2^{c+1}y +2^{c+2})x_2$}\\
\hline
\multicolumn{2}{|c|}{ \colorbox[gray]{0.8}{Polynomials with leading term $\bf{\delta_cx_1}$}}\\
\hline
\(\alpha =n\) & \(\alpha < n\)\\
\hline
$2^{n-\alpha}(\delta_c + 2^{c+1})x_1  - 2^{-\alpha}b_0(2\delta_c +2^{c+2})x_2 $& $2^{n-\alpha}(2^{-1}\delta_c + 2^{c})x_1  - 2^{-\alpha}b_0(\delta_c +2^{c+1})x_2 $\\ 
$ 2^{n-\alpha}(\delta_c +3(2^{c}))x_1 + 2^{-\alpha}b_0(-2\delta_c +2^cy -2^{c+2})x_2$& $ 2^{n-1-\alpha}(\delta_c )x_1 - 2^{-\alpha}b_0(\delta_c +2^{c}y +2^{c+1})x_2$\\
$2^{n-\alpha} \delta_cx_1+ 2^{-\alpha}b_0\delta_cyx_2$&$2^{n-1-\alpha} (\delta_c+ 2^{c+1})x_1+ 2^{-\alpha}b_0(\delta_cy +\delta_c +2^{c+1})x_2$\\
$2^{n-\alpha}(\delta_c+2^{c-1}y +3.2^{c})x_1 -2^{-\alpha}b_0(2\delta_c+2^{c+2})x_2 $& $2^{n-\alpha}(2^{-1}\delta_c-2^{c-1}y)x_1 -2^{-\alpha}b_0(\delta_c+2^{c+1})x_2 $\\
$ 2^{n-\alpha}(\delta_c + 2^{c-1}y + 2^c)x_1 + 2^{-\alpha}b_0\delta_cyx_2$& $ 2^{n-\alpha}(2^{-1}\delta_c + 2^{c-1}y)x_1 + 2^{-\alpha}b_0(\delta_cy +\delta_c +2^{c+1})x_2$\\
 $ 2^{n-\alpha}(\delta_c +2^c)x_1+2^{-\alpha}b_0(\delta_cy+2^cy)x_2$& $ 2^{n-\alpha}(2^{-1}\delta_cx_1+2^{-\alpha}b_0(\delta_cy+\delta_c+2^cy +2^{c+1})x_2$\\
 $2^{n-\alpha}(\delta_c + 2^{c+1})x_1+2^{-\alpha}b_0(\delta_cy +2^cy)x_2$& $2^{n-\alpha}(2^{-1}\delta_c + 2^{c}y+2^{c+1})x_1+2^{-\alpha}b_0(\delta_cy +\delta_c+2^cy+2^{c+1})x_2$\\
 $ 2^{n-\alpha}(\delta_c +2^{c-1}y + 2^{c+2})x_1 -2^{-\alpha}b_0(2\delta_c - 2^cy +2^{c+2})x_2$& $ 2^{n-\alpha}(2^{-1}\delta_c -2^{c-1}y + 2^{c})x_1 - 2^{-\alpha}b_0(\delta_c - 2^cy +2^{c+1})x_2$\\
 \hline
 $2^{n-\alpha}\delta_c x_1 - 2^{-\alpha}b_0(2\delta_c +2^{c+1}y +2^{c+2})x_2$&\\
 \hline
 \hline
\end{tabular}

\end{document}